\newcommand{\Z}{\mathbb{Z}}
\newcommand{\N}{\mathbb{N}}
\newcommand{\ff}{\mathbb{F}}
\newcommand{\Tr}{\operatorname{Tr}}
\newcommand{\CC}{\mathcal{C}}
\newcommand{\G}{\Gamma}
\newcommand{\sk}{\smallskip}
\newcommand{\msk}{\medskip}
\newtheorem{thm}{Theorem}[section]
\newtheorem{prop}[thm]{Proposition}
\newtheorem{lem}[thm]{Lemma}
\newtheorem{coro}[thm]{Corollary}
\theoremstyle{definition}
\newtheorem{rem}[thm]{Remark}
\newtheorem{exam}[thm]{Example}
\newtheorem{defi}[thm]{Definition}
\theoremstyle{remark}
\begin{document} \sloppy
\numberwithin{equation}{section}
\title[]{Spectral properties of generalized Paley graphs}
\author{Ricardo A.\@ Podest\'a, Denis E.\@ Videla}
\dedicatory{\today}
\keywords{Generalized Paley graphs, spectrum, Gauss periods, Ramanujan}
\thanks{2020 {\it Mathematics Subject Classification.} 
Primary 05C25, 05C50; \, Secondary 05C48, 11T24}
\thanks{Partially supported by CONICET, FonCyT and SECyT-UNC}
\address{Ricardo A.\@ Podest\'a. FaMAF -- CIEM (CONICET), Universidad Nacional de C\'ordoba. 
	\newline Av.\@ Medina Allende 2144, Ciudad Universitaria, (5000), C\'ordoba, Argentina. \newline
{\it E-mail: podesta@famaf.unc.edu.ar}}
\address{Denis E.\@ Videla. FaMAF -- CIEM (CONICET), Universidad Nacional de C\'ordoba. 
\newline	Av.\@ Medina Allende 2144, Ciudad Universitaria, (5000), C\'ordoba, Argentina.
	\newline {\it E-mail: dvidela@famaf.unc.edu.ar}}

\begin{abstract}
We study the spectrum of generalized Paley graphs $\G(k,q)=Cay(\ff_q,R_k)$, undirected or not, with $R_k=\{x^k:x\in \ff_q^*\}$ where $q=p^m$ with $p$ prime and $k\mid q-1$. 
We first show that the eigenvalues of $\G(k,q)$ are given by the Gaussian periods $\eta_{i}^{(k,q)}$ with $0\le i\le k-1$. 
Then, we explicitly compute the spectrum of $\G(k,q)$ with $1\le k \le 4$ and of $\G(5,q)$ for $p\equiv 1\pmod 5$ and $5\mid m$. 
Also, we characterize those GP-graphs having integral spectrum, showing that $\G(k,q)$ is integral if and only if $p$ divides $(q-1)/(p-1)$. 
Next, we focus on the family of semiprimitive GP-graphs. We show that they are integral strongly regular graphs  (of pseudo-Latin square type).
Finally, we characterize all integral Ramanujan graphs $\G(k,q)$ with $1\le k \le 4$ or where $(k,q)$ is a semiprimitive pair.
\end{abstract}

\maketitle

\section{Introduction}
In this paper we study the spectrum of generalized Paley graphs (GP-graphs for short), and some properties that can be deduced from the spectrum. 
The work has three parts. We first study the spectrum of GP-graphs $\G(k,q)$ and put the spectrum in terms of cyclotomic Gaussian periods. This allows us to give $\mathrm{Spec}(\G(k,q))$ explicitly for $1\le k\le 4$ and to characterize those GP-graphs having integral spectrum (first main result).
In the second part, we focus on the family of semiprimitive GP-graphs. These graphs are, in particular, strongly regular graphs with integral spectrum. We study the spectrum, parameters and invariants of these graphs as strongly regular graphs. Finally, in the third part, we study GP-graphs which are Ramanujan. We classify all integral Ramanujan graphs of the form $\G(k,q)$ with $1\le k\le 4$ and all semiprimitive GP-graphs which are Ramanujan (second main result).

Some results on the spectrum of arbitrary GP-graphs and on the structure of semiprimitive GP-graphs are known, and can be found scattered in the literature. For completeness, we have decided to include our proofs (with some additional references) to give a unified treatment and notations to these topics. However, the explicit computation of the spectra for $\G(3,q)$ and $\G(4,q)$, the characterization of GP-graphs with integral spectrum and the classification of Ramanujan semiprimitive GP-graphs 
are completely new.

\paragraph{\textit{Generalized Paley graphs}}
If $G$ is an abelian group and $S$ is a subset of $G$ not containing $0$, the associated Cayley graph $\Gamma = X(G,S)$ is the directed graph (digraph) with vertex set $G$ and where two vertices $u,v$ form a directed edge from $u$ to $v$ in $\Gamma$ if and only if $v-u \in S$. Since $0\notin S$ then $\Gamma$ has no loops. 
Analogously, the Cayley sum graph $X^+(G,S)$ has the same vertex set $G$ but now $v,w\in G$ are connected in $\Gamma$ 
by an arrow from $v$ to $w$ if and only if $v+w \in S$. 
We will use the notation $X^*(G,S)$ when we want to consider both $X(G,S)$ and $X^+(G,S)$ indistinctly.
Notice that if $S$ is symmetric, that is $-S=S$, then 
$X^*(G,S)$ is an $|S|$-regular simple (undirected without multiple edges) graph. 
Actually, given any two vertices $u,v$ there are two directed edges, $\vec{uv}$ and $\vec{vu}$. As usual, we consider these two directed edges as a non-directed single one denoted $uv$. 
However, the graph $X^+(G,S)$ may contain loops. In this case, there is a loop on vertex $x$ provided that  $x+x \in S$.

The \textit{generalized Paley graph} and  \textit{generalized Paley sum graph} are the Cayley graphs respectively given by
\begin{equation*} \label{Gammas}
	\G(k,q) = X(\ff_{q},R_{k}) \quad \text{and} \quad \Gamma^+(k,q) = X^+(\ff_q,R_k) 
\end{equation*} 
with connection set 
	$$R_{k} = \{ x^{k} : x \in \ff_{q}^*\}.$$ 
That is, $\G(k,q)$ is the graph with vertex set $\ff_{q}$ and two vertices $u,v \in \ff_{q}$ are neighbours (directed edge) 
if and only if $v-u=x^k$ for some $x\in \ff_q^*$. We will refer to them simply as \textit{GP-graphs} and \textit{GP$^+$-graphs} respectively (or \textit{GP$^*$-graphs} for both indistinctly).

Notice that if $\omega$ is a primitive element of $\ff_{q}$, then 
	$R_{k} = \langle \omega^{k} \rangle = \langle \omega^{(k,q-1)} \rangle$. 
This implies that $\G(k,q) = \G(k',q)$, where 
	$$k'=\gcd(k,q-1),$$
and that $\G(k,q)$ is a $\frac{q-1}{k'}$-regular graph. 
Thus, one usually assumes that 
	$$k \mid q-1$$ 
(hence $k'=k$), for if not we have that $\G(k,q)=\G(1,q)=K_q$.
Summing up, we have 
	$$\G(k,q) = \begin{cases}
		    \hfil K_q 		& \qquad \text{if $k'=1$}, \\[1mm]
		    \G(k',q)		& \qquad \text{if $k'>1$}.
	\end{cases}$$
Notice that for $q$ even, we have that $\Gamma^+(k,q)=\Gamma(k,q)$. 
On the other hand, when $q$ is odd, one can show that $\Gamma^+(k,q)$ has loops, since in this case there are exactly $|R_k|$ elements $x\in \ff_{q}$ such that $x+x=2x \in R_{k}$ (multiplication by $2$ is a bijection in $\ff_{q}$ for $q$ odd). 

The graph $\G(k,q)$ is undirected if and only if $q$ is even or else $q$ is odd and $k \mid \tfrac{q-1}2$.
The graph is connected  if and only if $\tfrac{q-1}{k}$ is a primitive divisor of $q-1$ (i.e.\@ $\frac 1k(p^m-1)$ does not divide $p^{a}-1$ for any $a<m$, where $q=p^m$).
This was proved in \cite{LP} for the undirected case, 
but it also holds in the directed case since $\G(k,q)$ is strongly connected if and only if the Waring number $g(k,q)$ exists, and this happens if and only if $\frac{q-1}{k}$ is a primitive divisor of $q-1$. We recall that a strongly connected digraph is a directed graph in which there is a directed path in each direction between any pair of vertices of the graph. 

For some values of $k$ and $q$, the GP-graphs $\G(k,q)$ are known graphs. For instance, for $k=1, 2$ we get the complete graph $\G(1,q)=K_q$, the classic (undirected) Paley graph $\Gamma(2,q) = P(q)$ for $q\equiv 1 \pmod 4$, and the directed Paley graph $\vec P(q)$ for $q\equiv 3 \pmod 4$. The graphs $\G(3,q)$ and $\G(4,q)$ are of interest too (see \cite{PV3}, where infinite pairs of equienergetic non-isospectral regular graphs ${\Gamma(k,q), \bar \Gamma(k,q)}$, $k=3,4$, are obtained).
One can see that for $p$ prime, we have that $\G(\frac{p-1}2,p)=C_p$ and $\G(p-1,p) = \vec{C_p}$, where $C_p$ and $\vec{C_p}$ are the undirected and directed $p$-cycles, respectively. 
Generalized Paley graphs with $k=q^\ell+1$ are studied in \cite{PV1}.
The connected GP-graphs of the form $\G(\tfrac{p^{bm}-1}{b(p^m-1)}, p^{bm})$ are the Hamming graphs $H(b,p^{m})$ (see \cite{LP}).

\paragraph{\textit{Spectrum}}
The spectrum of a graph $\G$, denoted $\mathrm{Spec}(\G)$, is the spectrum of its adjacency matrix $A$ (i.e.\@  
the set of eigenvalues of $A$ counted with multiplicities).
If $\Gamma$ has different eigenvalues $\lambda_0, \ldots, \lambda_t$ with multiplicities $m_0,\ldots,m_t$, we write 
as usual 
\begin{equation*} \label{spec}
	\mathrm{Spec}(\Gamma) = \{[\lambda_0]^{m_0}, \ldots, [\lambda_t]^{m_t}\}.
\end{equation*}

It is well-known that an $n$-regular graph $\G$ has $n$ as one of its eigenvalues, with multiplicity equal to the number of connected components of $\G$. 
That is, $\G$ is connected if and only if $n$ has multiplicity $1$.
The same happens for $n$-regular digraphs, i.e.\@ those directed graphs such that any vertex has the same in-degree and out-degree equal to $n$. In this case, $\G$ is strongly connected if and only if $n$ has multiplicity $1$.

The spectrum of few families of GP-graphs are known. The graphs $\G(1,q)$ and $\G(2,q)$ with $q\equiv 1\pmod 4$ are classic being the complete graphs $K_q$ and the classic Paley graphs $P(q)$, and hence with known spectra. The spectrum of $\G(k,q)$, for $k=3,4$, was computed in \cite{PV3} in the special case $k\mid \frac{q-1}{p-1}$, i.e.\@ in the case with integral spectrum (see Section \ref{sec:4}), where $q=p^m$ for some $m$. Also, in \cite{PV1} we computed the spectrum of a subfamily of semiprimitive GP-graphs, those of the form $\G(q^\ell+1, q^m)$ with $\frac{m}{(m,\ell)}$ even.

If $\Gamma$ is an $n$-regular graph, then $n$ is the greatest eigenvalue of $\G$. 
A connected $n$-regular undirected graph is called \textit{Ramanujan} if 
	$$|\lambda| = n  \qquad \text{or} \qquad |\lambda| \le 2\sqrt{n-1}$$ 
for any eigenvalue $\lambda$ of $\G$.
Ramanujan graphs are optimal expanders. 
For background on Ramanujan graphs and expanders see for instance the excellent surveys of Ram Murty \cite{Mur}, Hoory, Linial and Wigderson \cite{HLW} and Lubotzky \cite{Lub}. There are notions of Ramanujanicity for directed graphs (digraphs). 
It is both of theoretical and practical interest to obtain families of Ramanujan (di)graphs.

\subsubsection*{Outline and results}
The paper is organized as follows. 
In Section \ref{sec:2} we study the spectrum of GP-graphs in terms of Gaussian periods. 
By using period polynomials, in Section \ref{sec:3} we give explicit computations of $\G(k,q)$ for small values of $k$ and  in Section \ref{sec:4} we characterize all integral GP-graphs. In Section \ref{sec:5} we focus on the particular case of semiprimitive GP-graphs and in Section \ref{sec:6} we characterize integral Ramanujan graphs for $\G(k,q)$ with $1 \le k \le 4$ or $(k,q)$ a semiprimitive pair. 
Sections~\ref{sec:2} and \ref{sec:5} can be thought as a kind of survey with some extra new material or with a different exposition, while the other sections present completely new results.

Let $q=p^m$ with $p$ prime, assume that $k \mid q-1$ and put $n=\frac{q-1}{k}$. We now summarize the main results of the paper.

In Section~\ref{sec:2} we study the spectrum of GP-graphs. In Theorem \ref{Spectro Gkq} we show 
that the spectrum of $\G(k,q)$ can be put in terms of the cyclotomic Gaussian periods. More precisely, 
$$\mathrm{Spec}(\G(k,q)) = \{ [n]^{1+\mu n}, [\eta_{i_1}]^{\mu_{i_1} n}, \ldots, [\eta_{i_s}]^{\mu_{i_s} n} \}$$  
where $\eta_{i_1}^{(k,q)},\ldots, \eta_{i_s}^{(k,q)}$ are the different cyclotomic Gaussian periods and the $\mu_{i_j}$'s are certain numbers (see \eqref{gaussian} and \eqref{numbers}). 

The next section is devoted to explicit computations (based on previous works of Myerson \cite{My}, Gurak \cite{Gu1}, \cite{Gu2}, and Hoshi \cite{Ho} on period polynomials). In Theorems \ref{gp3q} and \ref{gp4q} we give the whole spectrum of $\G(3,q)$ and $\G(4,q)$, respectively. This, together with Examples~\ref{g1q} and \ref{Ex:Paley} and Remark \ref{g6812q} shows that the spectrum of $\G(k,q)$ with $k\mid q-1$ can be computed for every proper divisor $k$ of $24$ (i.e.\@ $k=1,2,3,4,6,8,12)$. Moreover, we give the spectrum of $\G(5,q)$ in half of the cases: the case $p\equiv 1 \pmod 5$ is given in Proposition \ref{G5} while the case $p\equiv -1 \pmod 5$ corresponds to the semiprimitive case and hence it is obtained by taking $k=5$ in Theorem \ref{semiprimitive} (the cases $p\equiv \pm 2 \pmod 5$ remain open).

In Section \ref{sec:4} we study integrality of the spectrum by way of period polynomials. In Theorem~\ref{Teo: GP enteros}, one of the main results, we show that $\mathrm{Spec}(\G(k,q))\subset \Z$ if and only if $k\mid\frac{q-1}{p-1}$.

In Section \ref{sec:5}, we first recall the definition of semiprimitive GP-graphs and give some infinite families of these graphs.
Then, in Subsection \ref{sec:6.1} we explicitly give the spectrum of semiprimitive GP-graphs by using Gauss periods (see Theorem \ref{semiprimitive}). 
Previously, in \cite{BWX}, Brouwer, Wilson and Xiang computed the spectra of a more general family defined in terms of semiprimitive pairs by using Gauss sums. Semiprimitive GP-graphs have three different eigenvalues; hence, in the connected case, they are strongly regular graphs (and hence distance regular graphs). In Subsection \ref{sec:6.2} we give the parameters of the semiprimitive GP-graphs as strongly regular graphs, as distance regular graphs and as pseudo-Latin square graphs (see Theorem \ref{srg}).

In Section \ref{sec:6} we study some families of Ramanujan GP-graphs. First, we characterize all semiprimitive GP-graphs which are Ramanujan. In Theorem~\ref{rama car}, another main result in the paper, we prove that if $\G(k,q)$ is semiprimitive, then it is Ramanujan if and only if $k=2,3,4,5$ and $q=p^m$ satisfy certain easy arithmetic conditions. In particular, we obtain eight infinite families of semiprimitive (hence integral) Ramanujan GP-graphs $\{\G(k,p^{2t})\}_{t\in\N}$, out of which five are valid for infinite different primes $p$. 
Finally, we show that all integral GP-graphs $\G(k,q)$ with $1\le k\le 4$ which are non-semiprimitive are Ramanujan.

\section{The spectrum of GP-graphs via cyclotomic Gaussian periods} \label{sec:2}
Here, we will express the spectra of an arbitrary GP-graph $\Gamma(k,q)$, of its complement $\bar \G(k,q)$, 
and of the associated sum graph $\G^+(k,q)$, in terms of cyclotomic Gaussian periods. 
We remark that $\bar \G(k,q)$ is not in general a GP-graph (unless $k=2$), but a union of Cayley graphs, since 
	$$\bar \G(k,q) = X(\ff_q, R_k^c \smallsetminus \{0\})= X(\ff_q, C_{1}^{(k,q)}) \cup \cdots \cup X(\ff_q, C_{k-1}^{(k,q)}),$$
where 
	$$C_{i}^{(k,q)} = \omega^{i} \, \langle \omega^k \rangle$$
is the coset in $\ff_q^*$ of the subgroup $\langle \omega^{k} \rangle$ with $\omega$ a generator of $\ff_{q}^*$.
In particular, the classic Paley graphs $\G(2,q)$ with $q\equiv 1 \pmod 4$ is the only GP-graph which is self-complementary.

We begin by recalling the definition and basic properties of Gaussian periods. 
Let $p$ be a prime, take $q=p^m$ with $m \in \N$ and let $k\mid{q-1}$. 
For any $i\in \{0,1,\ldots,k-1\}$, the $i$-th \textit{cyclotomic Gaussian period} is defined by
\begin{equation} \label{gaussian}
	\eta_{i}^{(k,q)} = \sum_{x\in C_{i}^{(k,q)}} \zeta_p^{\Tr_{q/p}(x)} \: \in \mathbb{Q}(\zeta_p), 
	 \qquad \text{for $0 \le i \le k-1$,}
\end{equation} 
where $\zeta_p = e^{\frac{2\pi i}{p}}$ and $\Tr_{q/p} : \ff_q \rightarrow \ff_p$ is the trace map given by 
$$\Tr_{q/p}(x) = x+x^p+x^{p^2}+\cdots +x^{p^{r-1}}.$$

The following relation is well-known (see for instance Proposition 1 in \cite{My}):
\begin{equation} \label{sum etas=1}
	\sum_{i=0}^{k-1} \eta_i^{(k,q)} = -1.
\end{equation}	
From Theorem 13 in \cite{DY} (see also \cite{My}), if we consider 
\begin{equation} \label{N} 
	N = \gcd(\tfrac{q-1}{p-1}, k), 
\end{equation}  
we have the following integrality results:
\begin{equation} \label{int gp}
	\eta_i^{(N,q)} \in \Z \qquad \text{and} \qquad N \eta_i^{(N,q)} +1 \equiv 0 \pmod p
\end{equation}
(actually, in \cite{DY} other notations are used: $ N$ and $N_1$ for our $k$ and $N$, respectively).

\subsubsection*{The spectrum of GP$^*$-graphs}
Let $\eta_0 = \eta_0^{(k,q)}, \ldots, \eta_{k-1} = \eta_{k-1}^{(k,q)}$ 
be the cyclotomic Gaussian periods as in \eqref{gaussian} and let 
\begin{equation} \label{Gaussian periods diferentes}
	\eta_{i_1},\ldots, \eta_{i_s}
\end{equation}
denote the different cyclotomic Gaussian periods not equal to $n=\tfrac{q-1}k$. We define the following numbers  
\begin{equation} \label{numbers} 
	\begin{aligned}
		\mu 		& = \#\{0 \le \ell \le k-1 : \eta_\ell =n\} \ge 0, \\[1mm] 
		\mu_{i_j} 	& = \#\{ 0 \le \ell \le k-1 : \eta_{\ell} = \eta_{i_j}\} \ge 1,
	\end{aligned}	
\end{equation}
for $1 \le j \le s$. For simplicity, sometimes we will need to use the notation $\mu =\mu_{i_0}$.

We now show that the spectra of both GP-graphs, their complements, and GP$^+$-graphs are determined by the Gaussian periods.
We recall that $\G^+(k,q)=\G(k,q)$ for $q$ even.

\begin{thm} \label{Spectro Gkq}
Let $q=p^m$ with $p$ prime and $k \in \N$ such that $k\mid q-1$. 
If we put $n=\frac{q-1}k$ then, in the notations in \eqref{Gaussian periods diferentes} and \eqref{numbers}, we have 
\begin{equation} \label{spec Gkq} 
	\mathrm{Spec}(\G(k,q)) = \{ [n]^{1+\mu n}, [\eta_{i_1}]^{\mu_{i_1} n}, \ldots, [\eta_{i_s}]^{\mu_{i_s} n} \}
\end{equation}
and $\mathrm{Spec}(\bar \G(k,q)) = \{ [(k-1)n]^{1+\mu n}, [-1-\eta_{i_1}]^{\mu_{i_1} n}, \ldots, [-1-\eta_{i_s}]^{\mu_{i_s} n} \}$.
Furthermore, if $q$ is odd and $n$ is even then  
\begin{equation} \label{spec Gkq+} 
	\mathrm{Spec}(\G^+(k,q)) = \{ [n]^{1+\mu n}, [\pm \eta_{i_1}]^{\frac 12 \mu_{i_1} n}, \ldots, [\pm \eta_{i_s}]^{\frac 12 \mu_{i_s} n} \}.
\end{equation}
Moreover, in any case, $\G(k,q)$, $\G^+(k,q)$ and $\bar \G(k,q)$ are (strongly) connected if and only if $\mu=0$ (with $k>1$ for $\bar \G(k,q)$). 
\end{thm}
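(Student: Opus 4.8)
The plan is to compute the eigenvalues of the Cayley graph $\G(k,q)=X(\ff_q,R_k)$ directly via characters of the additive group $\ff_q$, since for an abelian Cayley graph the eigenvalues are exactly the character sums over the connection set. The additive characters of $\ff_q$ are $\psi_a(x)=\zeta_p^{\Tr_{q/p}(ax)}$ for $a\in\ff_q$, and the eigenvalue of $\G(k,q)$ attached to $\psi_a$ is $\lambda_a=\sum_{x\in R_k}\psi_a(x)=\sum_{x\in C_0^{(k,q)}}\zeta_p^{\Tr_{q/p}(ax)}$. For $a=0$ this is $|R_k|=n$. For $a\neq 0$, writing $a=\omega^{-j}$ and noting $aC_0^{(k,q)}=C_{-j}^{(k,q)}$ (indices mod $k$, since $C_0=\langle\omega^k\rangle$ has index $k$), we get $\lambda_a=\eta_{-j\bmod k}^{(k,q)}$. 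As $a$ ranges over $\ff_q^*$, each coset $C_i^{(k,q)}$ is hit exactly $|C_i|=n$ times, so each period $\eta_i^{(k,q)}$ occurs with multiplicity $n$ among the nonzero-$a$ eigenvalues. Collecting equal periods and adding the $a=0$ contribution $n$ yields precisely \eqref{spec Gkq}: the eigenvalue $n$ has multiplicity $1+\mu n$ and each distinct period $\eta_{i_j}$ has multiplicity $\mu_{i_j}n$. The total count is $1+\mu n+\sum_j\mu_{i_j}n=1+kn=q$, as it must be.

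\textbf{For the complement}, I would use the general identity $A(\bar\G)=J-I-A(\G)$ on $q$ vertices. The all-ones vector (i.e.\ the trivial character) gives eigenvalue $q-1-n=(k-1)n$ for $\bar\G$; on the orthogonal complement $J$ acts as $0$, so each eigenvalue $\eta_{i_j}$ of $\G$ becomes $-1-\eta_{i_j}$ for $\bar\G$, with the same multiplicity $\mu_{i_j}n$. Since $\bar\G(k,q)$ is the edge-disjoint union of the Cayley graphs on the nonzero cosets and all these are vertex-transitive on the same abelian group, the eigenvalue-$((k-1)n)$ multiplicity is still $1+\mu n$ by the same character computation, giving the stated spectrum. \textbf{For the sum graph} $\G^+(k,q)=X^+(\ff_q,R_k)$ with $q$ odd and $n$ even, the adjacency operator is $x\mapsto$ ``add over $y$ with $x+y\in R_k$,'' i.e.\ $A^+=PA$ where $P$ is the permutation matrix of $x\mapsto -x$; equivalently $A^+$ and $A$ are related by $A^+e_x=Ae_{-x}$. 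Because $-1=\omega^{(q-1)/2}$, negation permutes the cosets $C_i\mapsto C_{i+(q-1)/2\bmod k}$; when $n$ is even, $(q-1)/2=\tfrac{k}{2}\cdot n\cdot\tfrac{?}{}$—more carefully, $-C_i^{(k,q)}=C_{i+k/2}^{(k,q)}$ when $k$ is even and $-C_i=C_i$ when $k$ is odd, but in either subcase the relevant fact is $-R_k=R_k$ (since $n$ even forces $-1\in\langle\omega^k\rangle$), so negation fixes $R_k$ setwise and $P$ commutes with $A$. Then $A^+=PA$ with $P^2=I$, $PA=AP$, so on each eigenspace $A^+$ acts as $\pm\eta_{i_j}$, splitting the multiplicity $\mu_{i_j}n$ (which is even, as $n$ is even) into two halves $\tfrac12\mu_{i_j}n$ each, and fixing the eigenvalue $n$ on the trivial eigenspace. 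This gives \eqref{spec Gkq+}.

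\textbf{For the connectivity statement}, I would invoke the standard fact quoted in the introduction: an $n$-regular (di)graph is (strongly) connected iff the eigenvalue $n$ has multiplicity one. By \eqref{spec Gkq} the multiplicity of $n$ in $\G(k,q)$ is $1+\mu n$, which equals $1$ iff $\mu=0$ (as $n\geq1$); similarly for $\G^+(k,q)$ by \eqref{spec Gkq+}. For $\bar\G(k,q)$ with $k>1$ the top eigenvalue is $(k-1)n$ with multiplicity $1+\mu n$, so again the criterion is $\mu=0$. (One must check that $n\ne(k-1)n$ and $n\ne -1-\eta_{i_j}$ etc.\ do not cause accidental collisions affecting the multiplicity count; but the multiplicity of the Perron value is governed purely by the character-sum count above, independent of numerical coincidences.)

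\textbf{The main obstacle} I anticipate is the sum-graph case: one has to be careful about exactly which permutation of cosets negation induces and to verify that the hypothesis ``$n$ even'' is precisely what makes $-1\in\langle\omega^k\rangle$ (equivalently $R_k=-R_k$), so that $A$ and $P$ commute and the $\pm$ splitting is clean; if instead $-1\notin\langle\omega^k\rangle$ the operator $A^+$ would mix distinct periods $\eta_i$ and $\eta_{i+k/2}$ and the eigenvalues would be different. The additive-character computation for $\G$ and $\bar\G$ is routine once the coset-index bookkeeping (multiplication by $\omega^{-j}$ shifts the coset index by $-j\bmod k$) is set up correctly.
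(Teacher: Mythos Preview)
Your argument for $\G(k,q)$ and $\bar\G(k,q)$ is essentially identical to the paper's: eigenvalues of an abelian Cayley graph are character sums, the additive characters $\psi_a$ give $\lambda_a=\eta_i^{(k,q)}$ when $a\in C_i^{(k,q)}$, and the complement is handled via $A(\bar\G)=J-I-A$. Nothing to add there.

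For $\G^+(k,q)$ you take a genuinely different route. The paper simply cites an external result (Proposition~2.10 in \cite{PV5}) stating that the non-principal eigenvalues of $\G^+$ are $\pm\lambda_\G$ with half the multiplicity. You instead write $A^+=PA$ with $P$ the negation permutation, observe that $n$ even is exactly the condition $-1\in\langle\omega^k\rangle=R_k$, so $P$ commutes with $A$, and then diagonalise $P$ on each $A$-eigenspace. This is more self-contained and illuminating. However, there is a small gap: you assert the $\pm$ split is \emph{equal} (``two halves'') without saying why. The missing observation is that for $q$ odd the involution $a\mapsto -a$ is fixed-point-free on $\ff_q^*$, hence on each union of cosets $\{a:\eta_{[a]}=\eta_{i_j}\}$ it pairs characters $\psi_a\leftrightarrow\psi_{-a}$; the vectors $\psi_a\pm\psi_{-a}$ then give bases of the $\pm1$ eigenspaces of $P$ of equal size $\tfrac12\mu_{i_j}n$. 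With that one sentence your argument is complete. (Your intermediate aside about $-C_i=C_{i+k/2}$ versus $-C_i=C_i$ is a red herring; all that matters is $-1\in R_k$.)

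On connectivity your final parenthetical is off. You say the multiplicity of the Perron eigenvalue of $\bar\G$ ``is governed purely by the character-sum count, independent of numerical coincidences'', but that is not true: if some $-1-\eta_{i_j}$ happened to equal $(k-1)n$ the multiplicity would jump. One must actually check this cannot occur for $k>1$ (it does occur for $k=1$, where $\bar\G=\bar K_q$ is totally disconnected with $\mu=0$; this is why the theorem excludes $k=1$). The verification is easy: $-1-\eta=(k-1)n$ forces $\eta=n-q$, but $|\eta|\le n$ gives $(k-2)n\le -1$, impossible for $k\ge 2$. The paper handles this by explicitly singling out $k=1$ at the end of its proof.
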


\begin{proof}
We first compute the eigenvalues of $\G(k,q)$. It is well-known that the spectrum of a Cayley graph $X(G,S)$ is determined by the irreducible characters of $G$. In fact, if $G$ is abelian, each irreducible character $\chi$ of $G$ induces an eigenvalue $\lambda_\chi$ of $X(G,S)$ by the expression 
	\begin{equation*}\label{Eigencayley}
		\lambda_\chi: = \chi(S) = \sum_{g \in S} \chi(g)
	\end{equation*} 
with eigenvector $v_{\chi} = \big( \chi(g) \big)_{g\in G}$.
	
For $\Gamma(k,q)$ we have $G=\ff_q$ and $S=R_k=\{x^k: x\in \ff_q^*\}$. 
The irreducible characters of $\ff_{q}$ are $\{ \chi_\gamma \}_{\gamma \in \ff_q}$ where 
	\begin{equation*}\label{chi gamma y}
		\chi_{\gamma}(y) = \zeta_{p}^{\Tr_{q/p}(\gamma y)} 
	\end{equation*}
for $y \in \ff_q$. Thus, since $R_{k} = \langle \omega^{k} \rangle = C_{0}^{(k,q)}$, the eigenvalues 
$\lambda_\gamma=\lambda_{\chi_\gamma}$ of $\G(k,q)$ are given by 
	\begin{equation} \label{chi gamma y2}
		\lambda_{\gamma} = \chi_{\gamma}(R_{k}) = 
		\sum_{y\in R_{k}}\chi_{\gamma}(y)=\sum_{y\in C_{0}^{(k,q)}}\zeta_{p}^{\Tr_{q/p}(\gamma y)}.
	\end{equation}
	
We have the disjoint union 
	$$\ff_q = \{0\} \cup C_0^{(k,q)} \cup \cdots \cup C_{k-1}^{(k,q)}$$ 
and $\# C_i^{(k,q)} = \# \langle \omega^k \rangle = \frac{q-1}k$ for every $i = 0, \ldots, k-1$. 
Now, for $\gamma=0$ we have 
	$$\lambda_0=\chi_0(R_k) = |R_k| = n,$$
since $\chi_0$ is the trivial character. 
This is in accordance with the fact that since $\G(k,q)$ is $n$-regular with $n=\frac{q-1}k$, then $n$ is an eigenvalue of $\G(k,q)$. 
On the other hand, if $\gamma \in C_{i}^{(k,q)}$ then $\gamma y$ runs over $C_{i}^{(k,q)}$ when $y$ runs over $C_{0}^{(k,q)}$
and thus, by \eqref{chi gamma y2}, we have 
	\begin{equation*}\label{AutTq}
		\lambda_{\gamma} = \sum_{x\in C_{i}^{(k,q)}} \zeta_{p}^{\Tr_{q/p}(x)} = \eta_{i}^{(k,q)}
	\end{equation*} 
which does not depend on $\gamma$.

Let $\eta_{i_1}, \ldots,\eta_{i_s}$ be the different cyclotomic Gaussian periods.
Notice that each $\gamma \in C_{i_\ell}^{(k,q)}$ gives the same $\lambda_\gamma$ and that $|C_{i_\ell}^{(k,q)}|=|C_0^{(k,q)}|=n$ for $1\le \ell\le s$.
Thus, it is clear that the multiplicity of $\lambda_\gamma$ is 
	$$m(\lambda_0) = 1+\sum_{\substack{0\le j \le k-1 \\[.5mm] \eta_j=n}} |C_j^{(k,q)}|$$ 
and 
	$$m(\lambda_\gamma) = \sum_{\substack{ 0\le j \le k-1 \\[0.5mm] \eta_{i_\ell} = \eta_j}} |C_j^{(k,q)}| \qquad (\text{for } \gamma \ne 0),$$	
that is $m(n)=1+\mu n$ and $m(\eta_{i_\ell}) = \mu_{i_\ell} n$  for $1 \le \ell \le s$.  This gives the spectrum for $\G(k,q)$.

To see the spectrum of the complementary graph, if $A$ is the adjacency matrix of $\G(k,q)$ then $J-A-I$ is the adjacency matrix of $\bar \G(k,q)$, where $J$ stands for the all $1$'s matrix. Since $\G(k,q)$ is $n$-regular with $q$ vertices, then $\bar \G(k,q)$ is $(q-n-1)$-regular, that is 
	$$\bar \lambda_0= q-n-1=(k-1)n.$$
The remaining eigenvalues of $\bar \G(k,q)$ are $-1-\lambda$ where $\lambda$ are the non-trivial eigenvalues, and 
hence the result follows by  \eqref{spec Gkq}.

Now consider the spectrum of $\G^+(k,q)$. Since $q$ is odd we have $\G^+(k,q) \ne \G(k,q)$ and, by Proposition 2.10 in \cite{PV5}, we get that the non-principal eigenvalues of $\G^+(k,q)$ and their corresponding multiplicities are given by 
	$$\lambda_{\G^+} = \pm \lambda_{\G} \qquad \text{and} \qquad m(\lambda_{\G^+})= \tfrac 12 m(\lambda_{\G}),$$ 
where $\lambda_{\G}$ and $m(\lambda_{\G})$ (respectively $\lambda_{\G}^+$ and $m(\lambda_{\G}^+)$) are the eigenvalues and multiplicities of $\G(k,q)$ (respectively $\G^+(k,q)$). 
Thus, we get the expression for $\mathrm{Spec}(\G^+(k,q))$ in \eqref{spec Gkq+} and we need $n$ even for the multiplicities to be integers.

Finally, being $n$-regular, $\G(k,q)$ is connected if and only if the multiplicity of $n$ is 1, i.e.\@ if $\mu=0$. A similar argument applies for the graphs $\G^+(k,q)$ and $\bar \G(k,q)$. To conclude, just notice that for $k=1$ we have $\G(1,q)=K_q$ and hence $\bar \G(1,q)$ is the empty graph with $q$ vertices which is disconnected. In this case, since $\mathrm{Spec}(K_q)=\{[q-1]^1, [-1]^{q-1}\}$ we have $\mathrm{Spec}(\bar K_q)=\{[0]^1,[-1-(-1)]^{q-1}\}=\{[0]^q\}$, although $\mu=0$ (also $-1-\eta_0=0$ by \eqref{sum etas=1}).
\end{proof}

We now make some observations on the previous theorem and point out some consequences of it for the spectrum of $\G(k,q)$.

\begin{rem} \label{Spec remarks}
$(i)$
If the Gaussian periods are all different, i.e.\@ $\eta_i \ne \eta_j$ for $0 \le i < j \le k-1$, then $\mu=0$ and $\mu_{i_j}=1$ for every $j$  since $q=kn+1$, and hence we have
	$$\mathrm{Spec}(\G(k,q)) = \{ [n]^1, [\eta_0]^n, [\eta_1]^n, \ldots, [\eta_{k-1}]^n\}.$$ 
This holds, for instance, for Paley graphs $\G(2,q)$ --both directed and undirected-- and also for the graphs $\G(3,q)$ and $\G(4,q)$ in the non-semiprimitive case  (i.e.\@ $p\not\equiv -1 \pmod k$), as one can see in Example \ref{Ex:Paley} and Theorems~\ref{gp3q} and \ref{gp4q}, respectively. 

\noindent $(ii)$
Notice that, by the theorem, $\G(k,q)$ is integral if and only if $\bar \G(k,q)$ and $\G^+(k,q)$ are integral. 
In Section \ref{sec:4} we will characterize all integral GP-graphs (and hence all integral complements and all integral GP$^+$-graphs).

\noindent $(iii)$
Theorem \ref{Spectro Gkq} allows one to compute the spectrum of families of GP-graphs in those cases where the (cyclotomic) Gaussian periods or Gaussian sums are known. 
The Gaussian periods $\eta_i^{(k,q)}$ for $k=2,3,4,6,8,12$ are well-known; the cases $k=2,3,4$ date back to Gauss (see for instance \cite{My}) while the cases $k=6,8,12$ are due to Gurak (\cite{Gu1}, \cite{Gu2}). The case $k=5$ is partially done by Hoshi \cite{Ho}. A case which is well understood is when $(k,q)$ is a semiprimitive pair. 
Some of these cases will be treated in more detail in Sections \ref{sec:3} and \ref{sec:5}. 
Other general examples of known Gaussian sums are the so-called index 2 and index 4 cases (see the literature).
It would be interesting to find the spectrum of $\G(k,q)$ in these cases. 
\end{rem}

To close the section we illustrate with two basic examples. 
We compute the spectrum of $\G(k,q)$ for $k=1,2$. Using Theorem \ref{Spectro Gkq} one can also obtain the spectrum of $\bar \G(k,q)$ and $\G^+(k,q)$ for $k=1,2$ (we leave the details).
More involved computations will be performed in the next section.

\begin{exam}[\textit{Complete graphs}] \label{g1q}
We have $\G(1,q)=K_q$ and $\mathrm{Spec}(K_q)=\{[q-1]^1,[-1]^{q-1}\}$.
Using Theorem \ref{Spectro Gkq}, since $n=q-1$ and $\mu=0$, $\mu_1=1$ by \eqref{numbers}, we obtain that
	$$\mathrm{Spec}(\G(1,q)) = \{[q-1]^1,[\eta_0]^{q-1}\},$$ 
and $\eta_0=-1$ by \eqref{sum etas=1}, hence recovering the known result. \hfill $\lozenge$
\end{exam}

\goodbreak 

\begin{exam}[\textit{Paley graphs}] \label{Ex:Paley}	
We recall that $\G(2,q)$ with $q=p^m$ is the classic (undirected) Paley graph $P(q)$ if $q\equiv 1 \pmod 4$, hence $p\equiv 1 \pmod 4$ or $p\equiv 3 \pmod 4$ and $m=2t$; and it is the directed Paley graph $\vec{P}(q)$ if $q\equiv 3 \pmod 4$, hence $p\equiv 3 \pmod 4$ and $m=2t+1$.

If we put $n=\frac{q-1}2$, by Theorem \ref{Spectro Gkq} we have that 
	$$\mathrm{Spec}(\Gamma(2,q)) = \{ [n]^{1+\mu}, [\eta_0]^{1+\mu_0 n}, [\eta_1]^{1+\mu_1 n}\}$$
where $\eta_i=\eta_i^{(2,q)}$ for $i=0,1$. The above Gaussian periods are known, see for instance Lemma~11 in \cite{DY}. In our notations, (i.e.\@ taking $s=1$ in \cite{DY}, $r=p^m$ is our $q$) we have  
\begin{equation*} \label{eta_0 (2,q)}
	\eta_0= \begin{cases}
	\tfrac 12 \big(-1+(-1)^{m-1} \sqrt q \big) 				& \qquad \text{if $p\equiv 1 \pmod 4$}, \\[1.5mm]  
	\tfrac 12 \big(-1+(-1)^{m-1} \sqrt{-1}^m \sqrt q \big)   & \qquad \text{if $p\equiv 3 \pmod 4$},
	\end{cases}	
\end{equation*}
and $\eta_1=-1-\eta_0$. First, notice that $\eta_0$ and hence $\eta_1$ are real if and only if $\G(2,q)$ is undirected.
Second, note that $\eta_0 \ne \eta_1$ and that $n \ne \eta_0,\eta_1$. These last conditions imply that $\mu=0$ and $\mu_0=\mu_1=1$ (or conversely, since $2n+1=q$ we must have that $\mu=0$ and $\mu_0=\mu_1=1$). 
Hence, we have that  
	$$\mathrm{Spec}(\Gamma(2,q)) = \{ [n]^{1}, [\eta_0]^{n}, [\eta_1]^{n}\},$$ 
where
\begin{align*}
	& \eta_0 = \begin{cases} 
	\hfil \frac{-1-p^t}2   & \qquad \qquad \text{if $p\equiv 1 \pmod 4$}, \\[1.5mm]
	\tfrac{-1-(-1)^t p^t}2 & \qquad \qquad \text{if $p\equiv 3 \pmod 4$},
	\end{cases} \qquad \qquad \text{for $m=2t$},  \\[2mm]
	& \eta_0 = \begin{cases} 
	\hfil \frac{-1+p^t\sqrt{p}}2   & \quad \qquad \text{if $p\equiv 1 \pmod 4$}, \\[1.5mm]
	\tfrac{-1-(-1)^t i p^t\sqrt{p}}2 & \quad \qquad \text{if $p\equiv 3 \pmod 4$},
	\end{cases} \qquad \qquad \text{for $m=2t+1$},
\end{align*} 
and $\eta_1=-1-\eta_0$. Notice that $\eta_0 \in \Z$ for $m=2t$ and $\eta_0 \in \mathbb{Q}(\sqrt{-p})$ for $m=2t+1$. 
This coincides with the known spectrum 
	\begin{equation*} \label{spec paley}
		\mathrm{Spec}(P(q)) = \big \{ [\tfrac{q-1}2]^1, [\tfrac{-1+(-1)^m \sqrt{q}}2]^{n}, [\tfrac{-1-(-1)^m\sqrt{q}}2]^{n} \big \}.
	\end{equation*}
We also obtain
	\begin{equation} \label{spec paley dir}
		\mathrm{Spec}(\vec{P}(q)) = \big\{ [\tfrac{q-1}2]^1, [\tfrac{-1+(-1)^m i^m \sqrt{q}}2]^{n}, [\tfrac{-1-(-1)^m i^m \sqrt{q}}2]^{n} \big\}.
	\end{equation}

Finally, the graph $\G(2,q)$ is connected since the multiplicity of the regularity degree is 1. 
Therefore, since $\G(2,q)$ is regular and connected and has exactly 3 eigenvalues it is a strongly regular graph (in the undirected case). All these facts are of course well-known.  
\hfill $\lozenge$
\end{exam}

\section{Explicit computations through period polynomials} \label{sec:3}
Since the spectrum of GP-graphs is given in terms of Gaussian periods, we now recall the polynomial associated with them. The \textit{period polynomial} is defined by 
\begin{equation} \label{period polynomial}
	\Psi_{k,q}(x) = \prod_{i=0}^{k-1} (x-\eta_i^{(k,q)}), 
\end{equation} 
where $\eta_i^{(k,q)}$ is the Gaussian period given in \eqref{gaussian}.

In some cases, the expansions of these polynomials are known, and they factor into product of polynomials of small degree, hence their roots (the Gaussian periods) can be explicitly computed.  
We will use some of the known cases to give the spectrum of the associated GP-graphs explicitly. In particular, the spectrum of the graphs $\G(k,q)$ with $k\mid 24$ ($k\ne 24$) can be determined (although it is highly non-trivial in most of the cases). For simplicity, we will give explicitly the spectrum of the graphs $\G(k,q)$ with $k=3,4$ (the cases $k=1,2$ were presented in Examples \ref{g1q} and \ref{Ex:Paley}). For the remaining cases $k=6,8,12$ we refer to the works of Gurak. Using results of Hoshi we give the spectrum of $\G(5,p^{5t})$ in the case $p\equiv 1 \pmod 5$. 
Another well-known case is the semiprimitive one, which is delayed until Section~\ref{sec:5}.

In \cite{PV3}, we have computed the spectrum of $\G(k,q)$ for $k\mid \frac{q-1}{p-1}$ with $k=3,4$, where $q=p^m$. 
There, we used a relation that we found between the spectrum of GP-graphs $\G(k,q)$ and the weight distribution of certain irreducible cyclic codes $\mathcal{C}(k,q)$, provided that $k\mid \frac{q-1}{p-1}$ (a posteriori, those GP-graphs having integral spectrum, see Section \ref{sec:4}). Namely, we have used this relation and the fact that the weight distributions for the codes $\CC(3,q)$ and $\CC(4,q)$ was already known (which was computed by using Gaussian periods) in these cases. 

Now, we will give the complete result, that is we give the spectrum of $\G(k,q)$ for $k\mid q-1$ with $k=3,4$, by way of explicit factorizations of the period polynomials $\Psi_{3,q}(x)$ and $\Psi_{4,q}(x)$ obtained by Myerson (\cite{My}). It turns out that there is one extra case for $k=3$ and two extra cases for $k=4$ in this more general setting (i.e. $k\mid q-1$ instead of $k\mid \frac{q-1}{p-1}$).

We now give the spectrum of the GP-graphs $\G(3,q)$ explicitly. 
\begin{thm}	\label{gp3q}
Let $q=p^{m} \ge 5$ with $p$ prime such that $3\mid q-1$ and put $n=\frac{q-1}3$. Thus, the graph $\G(3,q)$ 
is connected and undirected with real spectrum given as follows:
\begin{enumerate}[$(a)$]
	\item If $p\equiv 1 \pmod 3$ with $3\mid m$ then 
		$$\mathrm{Spec}(\G(3,q)) = \big\{ [n]^1, \big[\tfrac{a\sqrt[3]{q}-1}{3}\big]^n, \big[\tfrac{-\frac{1}{2} (a+9b)\sqrt[3]{q}-1}{3}\big]^n, 
		\big[\tfrac{-\frac 12 (a-9b) \sqrt[3]{q}-1}{3}\big]^n \big\} $$
	where $a,b$ are integers uniquely determined by 
		\begin{equation} \label{ab27}
			4\sqrt[3]{q}=a^2+27b^2, \qquad a\equiv 1 \pmod 3 \qquad \text{and} \qquad (a,p)=1.
		\end{equation} 
		
	\item  
		If $p\equiv 1 \pmod 3$ with $3 \nmid m$ then $\mathrm{Spec}(\G(3,q)) = \{ [n]^1, [x_0]^n, [x_1]^n, [x_2]^n \}$ 
		where 
			$$x_j= -\tfrac 13 \big( 1+\omega^j W + \frac{q}{\omega^j W} \big),	\qquad j\in \{0,1,2\},$$ 
		$\omega=e^{\frac{2\pi i}{3}}$ and 
			$W = \sqrt[3]{q} \sqrt[3]{\tfrac 12 (-a + \sqrt{-27} b)}$,	
		where $a$ and $b$ are uniquely determined integers ($b$ up to sign) satisfying 
		$$4q=a^2+27b^2, \qquad a\equiv 1 \pmod 3 \qquad \text{and} \qquad (a,p)=1$$
		Here, $\sqrt{}$ and $\sqrt[3]{}$ denote any square and cubic root, respectively. \msk 
	
	\item If $p\equiv 2 \pmod 3$ with $m$ even then  
		$$\mathrm{Spec}(\G(3,q)) = \begin{cases} 
		\big\{ [n]^1, \big[\tfrac{\sqrt{q}-1}{3}\big]^{2n}, \big[\tfrac{-2\sqrt{q}-1}{3}\big]^n \big\} & 
		\qquad \text{for $m\equiv 0 \pmod 4$}, \\[3mm]
		\big\{ [n]^1, \big[\tfrac{2\sqrt{q}-1}{3}\big]^{n}, \big[\tfrac{-\sqrt{q}-1}{3}\big]^{2n} \big\} & 
		\qquad \text{for $m\equiv 2 \pmod 4$}.
		\end{cases}$$
\end{enumerate}

Furthermore, the spectrum is integral in cases $(a)$ and $(c)$. 
\end{thm}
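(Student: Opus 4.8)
The plan is to deduce everything from Theorem~\ref{Spectro Gkq}, which already says that $Spec(\G(3,q))$ consists of the regularity degree $n=\frac{q-1}{3}$ together with the cubic Gaussian periods $\eta_0^{(3,q)},\eta_1^{(3,q)},\eta_2^{(3,q)}$, the non‑principal part being the multiset of roots of the period polynomial $\Psi_{3,q}(x)=\prod_{i=0}^{2}\bigl(x-\eta_i^{(3,q)}\bigr)$, with multiplicities governed by \eqref{numbers}. Before computing these roots I would dispose of the qualitative claims. First, $p\equiv 1\pmod 3$ forces $p$ and $q$ odd; and whenever $q$ is odd with $3\mid q-1$ one has $6\mid q-1$, hence $3\mid\frac{q-1}{2}$, while for $q$ even $\G(3,q)$ is automatically undirected — so $\G(3,q)$ is undirected in all three cases by the criterion recalled in the introduction. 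Connectedness ($\mu=0$) I would check a posteriori: once the periods are computed one sees directly that $n$ is not among them for $q\ge 5$, the only borderline instance being $q=4$ in the semiprimitive case (where $\eta_0=n$ and $\G(3,4)\cong 2K_2$), which is excluded by hypothesis. Thus the real task is to factor $\Psi_{3,q}$ and to decide which of the three periods coincide.

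For the factorization I would import the classical closed forms of the cubic period polynomial — due to Gauss over $\ff_p$ and worked out for arbitrary $q=p^m$ by Myerson \cite{My} — organising them around the cubic Gauss sums. The discrete Fourier transform of $(\eta_0,\eta_1,\eta_2)$ produces $g_0=-1$ and the Gauss sums $g_1=g(\chi)$, $g_2=g(\bar\chi)=\overline{g_1}$ of a multiplicative character $\chi$ of $\ff_q$ of order $3$, subject to the standard relations $g_1g_2=q$ and $g_1^{\,3}=q\,J(\chi,\chi)$, where $J(\chi,\chi)\in\Z[\omega]$ has $|J|^2=q$ and can be normalised as $\tfrac12(-a+\sqrt{-27}\,b)$ with $4q=a^2+27b^2$, $a\equiv 1\pmod 3$, $(a,p)=1$. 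Inverting the transform gives, for a suitable cube root $W$ of $q\,J(\chi,\chi)$ and with $\omega=e^{2\pi i/3}$ as in part $(b)$,
$$\eta_j\;=\;-\tfrac13\Bigl(1+\omega^{\,j}W+\tfrac{q}{\omega^{\,j}W}\Bigr),\qquad j=0,1,2,$$
using $g_2=q/g_1$; this is already the shape appearing in $(b)$. From here I would split on $p\bmod 3$.

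If $p\equiv 1\pmod 3$, then $\chi$ is (the lift of) a cubic character of $\ff_p$, and the Hasse--Davenport relation rewrites $g_1$ over $\ff_q$ in terms of the Gauss sum over $\ff_{p^{m/3}}$ precisely when $3\mid m$; in that subcase $4\sqrt[3]{q}=4p^{m/3}=a^2+27b^2$ with $a,b\in\Z$ as in \eqref{ab27}, the cube root becomes rational, and taking real parts in the displayed formula collapses $\eta_j$ to the three integers $\tfrac13(a\sqrt[3]q-1)$ and $\tfrac13\bigl(-\tfrac12(a\pm 9b)\sqrt[3]q-1\bigr)$, which is part $(a)$. When $3\nmid m$ no rational cube root exists, one has $4q=a^2+27b^2$, and the displayed formula (Cardano's casus irreducibilis) is exactly part $(b)$. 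If instead $p\equiv 2\equiv -1\pmod 3$ — the semiprimitive case, which forces $m$ even — the classical evaluation of semiprimitive Gauss sums gives $g_1$ real, $g_1=g_2=\varepsilon\sqrt q$ with $\varepsilon=(-1)^{m/2-1}$, so $\eta_0=\tfrac13(-1+2\varepsilon\sqrt q)$ and $\eta_1=\eta_2=\tfrac13(-1-\varepsilon\sqrt q)$; the two equal periods produce the eigenvalue of multiplicity $2n$, and the sign $\varepsilon$ (i.e.\ the parity of $m/2$) separates the subcases $m\equiv 0$ and $m\equiv 2\pmod 4$ of $(c)$. For the multiplicity bookkeeping in $(a)$ and $(b)$ one checks from $4q=a^2+27b^2$ and $a\equiv 1\pmod 3$ that $b\ne 0$ and $a\ne\pm 3b$, so the three periods are pairwise distinct, each non‑principal eigenvalue has multiplicity $n$ and $n$ has multiplicity $1$. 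The integrality assertions then follow from \eqref{int gp}: in $(a)$, $\frac{q-1}{p-1}\equiv m\equiv 0\pmod 3$, so $N=\gcd(\frac{q-1}{p-1},3)=3$ (notation \eqref{N}) and $\eta_i^{(3,q)}\in\Z$; in $(c)$, $\frac{q-1}{p-1}\equiv\frac{3m}{2}\equiv 0\pmod 3$, so again $N=3$; in $(b)$, $N=1$ and in fact the three periods form a single $\gal(\mathbb{Q}(\zeta_p)/\mathbb{Q})$‑orbit (because $\ff_p^{\ast}\not\subseteq R_3$ when $3\nmid m$), hence they are irrational cubic algebraic integers — which is why the closed form is unavoidably written with complex cube roots.

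The spectral content (Theorem~\ref{Spectro Gkq} together with $\sum_i\eta_i^{(3,q)}=-1$ from \eqref{sum etas=1}) is routine; the difficulty is entirely arithmetic and concentrated in the period‑polynomial step. The hard part will be pinning down the normalisations so that Myerson's factorisations are reproduced verbatim: the precise sign of $a$ and the congruence $a\equiv 1\pmod 3$ in $J(\chi,\chi)$, the correct branch of the cube root $W$ (so that the three quantities $-\tfrac13(1+\omega^jW+q/(\omega^jW))$ are the periods), the Hasse--Davenport descent in the $3\mid m$ subcase of $(a)$, and the sign $\varepsilon=(-1)^{m/2-1}$ in the semiprimitive evaluation feeding $(c)$. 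I would also be careful with the degenerate small cases ruled out by $q\ge 5$ (essentially only $q=4$, where $\eta_0=n$) to keep the ``connected'' assertion clean.
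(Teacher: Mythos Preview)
Your approach is correct and genuinely different from the paper's. Both proofs start from Theorem~\ref{Spectro Gkq} and reduce to evaluating the three cubic periods, but they diverge in the evaluation step. The paper quotes Myerson's explicit period polynomial $\Psi_{3,q}(x)=x^3+x^2-nx-d$ together with his case-by-case factorizations over~$\mathbb{Q}$ (Theorems~13 and~16 of \cite{My}); in cases~$(a)$ and~$(c)$ the roots are then read off from linear factors, while in case~$(b)$ the paper applies Cardano's formula to the irreducible cubic, computing $\Delta_0=q$, $\Delta_1=-aq$, and arriving at the same $W$ you obtain. You instead bypass the polynomial coefficients entirely, inverting the discrete Fourier transform $\eta_j\mapsto g(\chi^j)$ to get the closed form $\eta_j=-\tfrac13\bigl(1+\omega^jW+q/(\omega^jW)\bigr)$ directly from the cubic Gauss sum, and then evaluating $g(\chi)$ in each regime via the Jacobi-sum normalisation, Hasse--Davenport descent, and the semiprimitive Gauss-sum formula. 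Your route is more conceptual and explains structurally why Cardano's $W$ coincides with the Gauss sum (both satisfy $W^3=qJ(\chi,\chi)$ and $W\bar W=q$), at the cost of the normalisation bookkeeping you flag; the paper's route is quicker by citation but treats Myerson's factorizations as a black box. Two small remarks: your congruence $\tfrac{q-1}{p-1}\equiv\tfrac{3m}{2}\pmod 3$ in case~$(c)$ is oddly phrased (the sum $\sum_{i=0}^{m-1}(-1)^i$ is simply $0$ for $m$ even), though the conclusion is right; and the paper checks connectedness via the primitive-divisor criterion rather than a posteriori, which saves the eigenvalue comparison you propose.
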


\begin{proof}
The graph $\G(3,q)$ is connected and undirected (see the Introduction) since $\frac{q-1}3$ is a primitive divisor of $q-1$ and for $q$ odd we have that $3 \mid \frac{q-1}2$ (in fact $2\mid q-1$ and $3\mid q-1$, hence $6\mid q-1$, which is equivalent to $3\mid \frac{q-1}2$). Thus, $\mathrm{Spec}(\G(3,q)) \subset \mathbb{R}$  (the adjacency matrix of an undirected graph is symmetric and so its spectrum is real).
	
In Theorems 13 and 16 in \cite{My} (see also Lemmas 7 and 8 in \cite{DY}) Myerson gave the polynomial $\Psi_{3,q}(x)$ and its factorizations over the rationals. Namely, 
	\begin{equation} \label{Psi3x}
		\Psi_{3,q}(x) = x^3+x^2-nx-d \qquad \text{with} \qquad d =\tfrac{(a+3)q-1}{27},
	\end{equation}	 
where $a$ and $b$ are integers uniquely determined ($b$ only up to sign) by 
$4q=a^2+27b^2$,
with $a \equiv 1\pmod 3$ and if $p\equiv 1 \pmod 3$ then $(a,p)=1$. 
We have the following cases: \vspace{-1mm}
\begin{enumerate}[$(a)$]
	\item If $p\equiv 1 \pmod 3$ and $3 \mid m$, then 
		$$\Psi_{3,q}(x)= \tfrac{1}{27} (3x+1-a \sqrt[3]{q}) (3x+1-a \sqrt[3]{q})(3x+1-a \sqrt[3]{q}),$$ 
 	where $a$ and $b$ are as in \eqref{ab27}. \sk 
 	
	\item If $p\equiv 1 \pmod 3$ and $3 \nmid m$, then $\Psi_{3,q}(x)$ is irreducible over $\mathbb{Q}$. \sk 
	
	\item If $p\equiv 2 \pmod 3$ and $m$ is even, then 
		$$\Psi_{3,q}(x)= \begin{cases}
		\tfrac{1}{27} (3x+1+2\sqrt{q}) (3x+1-\sqrt{q})^2 & \qquad \text{if $\frac m2$ is even}, \\[1.5mm]
		\tfrac{1}{27} (3x+1-2\sqrt{q}) (3x+1+\sqrt{q})^2 & \qquad \text{if $\frac m2$ is odd}. 
		\end{cases}$$
\end{enumerate}

Thus, the eigenvalues and multiplicities of $\G(3,q)$ are directly obtained from these expressions in cases $(a)$ and $(c)$. 

Now, we study case $(b)$. We have to find the roots of $\Psi_{3,q}(x)$ in \eqref{Psi3x}. 
The roots of a general cubic $Ax^3+Bx^2+Cx+D$ are given by 
	$$x_j = -\tfrac{1}{3A} \big( B +\omega^j W + \tfrac{\Delta_0}{\omega^j W} \big)$$ 
for $j=0,1,2$, where $\omega=e^{\frac{2\pi i}{3}}$ is the primitive cubic root of 1, and 
	$$W = \sqrt[3]{\tfrac 12 \big( \Delta_1 \pm \sqrt{\Delta_1^2-4\Delta_0^3} \big)}$$
where $\Delta_0=B^2-3AC$ and $\Delta_1 =2B^3-9ABC+27A^2D$.
Thus, by \eqref{Psi3x}, we have that $A=B=1$ and we obtain that $\Delta_0=q$ and $\Delta_1 =-aq$. In this way, we arrive at
	$$W= \sqrt[3]{\tfrac 12 \big( -aq \pm q\sqrt{a^2-4q} \big)} = \sqrt[3]{q} \sqrt[3]{\tfrac 12(-a \pm \sqrt{-27}b)},$$
where we have used that $4q = a^2 + 27 b^2$. 

Here, $\sqrt{}$ and $\sqrt[3]{}$ denote any square and any cubic root, respectively. In general, the sign $\pm$ can be randomly chosen, and if $W=0$ with one sign one has to chose the other one. 
It can never happen that both signs give $W=0$, since this is equivalent to $\Delta_1=\Delta_0=0$ and both $\Delta_0$ and $\Delta_1$ are non-zero in our case. Hence, we choose the plus sign, and part $(b)$ is proved.
	
Finally, the eigenvalues in cases $(a)$ and $(c)$ are integers by the conditions on $p$ and $m$. In case ($a$) we have that $a\equiv a\pm 9b \equiv 1 \pmod 3$ where $a\pm 9b$ is even since $4p^m=a^2+27b^2$ implies that $a$ and $b$ have the same parity. The remaining assertion is clear from the statement. 
\end{proof}

 Using the theorem one can compute, for instance, $\mathrm{Spec}(\G(3,7^{3m}))$ with item $(a)$, $\mathrm{Spec}(\G(3,7^{3m+j}))$, $j=1,2$, with item $(b)$ and $\mathrm{Spec}(\G(3,5^{2m}))$ with item $(c)$ for any $m\in \N$.

Next we give the spectrum of the GP-graphs $\G(4,q)$ explicitly. 
\begin{thm}	\label{gp4q}
Let $q=p^m$ with $p$ prime such that $4 \mid q -1$ 
and put $n=\frac{q-1}4$. 
Thus, the graph $\G(4,q)$ is connected (except for $q=9$) 
with spectrum given as follows:
	\begin{enumerate}[$(a)$]
		\item If $p\equiv 1 \pmod 4$ with $m \equiv 0 \pmod 4$ then  
		$$\mathrm{Spec}(\G(4,q)) =  \Big\{ [n]^1, \big[\tfrac{\sqrt{q} + 4d\sqrt[4]{q}-1}{4}\big]^n, \big[\tfrac{\sqrt{q} - 4d\sqrt[4]{q}-1}{4}\big]^n, \big[\tfrac{-\sqrt{q} + 2c \sqrt[4]{q}-1}{4}\big]^n, \big[\tfrac{-\sqrt{q} - 2c \sqrt[4]{q}-1}{4}\big]^n \Big\}$$
		where $c,d$ are integers uniquely determined by 
		\begin{equation} \label{c4d}
			\sqrt{q}=c^2+4d^2, \qquad c\equiv 1 \pmod 4 \qquad \text{and} \qquad (c,p)=1.
		\end{equation} 
		
		\item If $p\equiv 1 \pmod 4$ with $m \equiv 2 \pmod 4$, then 
		\begin{multline*} \mathrm{Spec}(\G(4,q)) = \Big\{ [n]^1, \big[\tfrac{-(1+\sqrt q) + \sqrt{2(q+c\sqrt{q})}}{4}\big]^n, 
		\big[\tfrac{-(1+\sqrt q) - \sqrt{2(q+c\sqrt{q})}}{4}\big]^n, \\	\big[\tfrac{-(1-\sqrt q) + \sqrt{2(q-c\sqrt{q})}}{4}\big]^n,  \big[\tfrac{-(1-\sqrt q) - \sqrt{2(q-c\sqrt{q})}}{4}\big]^n \Big\}
		\end{multline*}
		where $c,d$ are integers unique determined ($d$ up to sign) by  
	   \begin{equation} \label{c04d0}
			q=c^2+4d^2, \qquad c \equiv 1 \pmod 4 \qquad \text{and} \qquad (c,p)=1.
	  \end{equation}

		\item If $p\equiv 1 \pmod 4$ with $m$ odd and $n$ odd then 
			$$\mathrm{Spec}(\G(4,q)) = \{ [n]^1, [x_1^+]^n, [x_1^-]^n, [x_2^+]^n, [x_2^-]^n \}$$ 
		with 
\begin{align*} 
\begin{split}
&	x_1^\pm = \tfrac 12 \Big( -\sqrt{2y-\tfrac q8} \pm \sqrt{-(2y + \tfrac q8) + \tfrac{(c-1)q+1}{4\sqrt{2y-\tfrac q8}}}
\Big) -\tfrac 14, \\[1.5mm]
&	x_2^\pm = \tfrac 12 \Big( \sqrt{2y-\tfrac q8} \pm \sqrt{-(2y + \tfrac q8) - \tfrac{(c-1)q+1}{4\sqrt{2y-\tfrac q8}}}
\Big) -\tfrac 14,
\end{split}
\end{align*} 
where $y=\frac{q}{48}+ W -\frac{P}{3W}$ and $W = \sqrt[3]{-\frac Q2 \pm \sqrt{\frac{Q^2}{4}+\frac{P^3}{27}}}$
 with $P= \frac{36- (28q -12c^2 -12 )q}{3 \cdot 256}$, $Q= -\frac{q^2}{27 \cdot 256} +\frac{q\gamma}{24} - \frac{((c-1)q+1)^2}{64 \gamma}$ and $\gamma = \frac{(9q-4c^2-4)q-12}{256}$, where $c,d$ are integers as in \eqref{c04d0}. 
\msk 
		\item If $p\equiv 1 \pmod 4$ with $m$ odd and $n$ even then 
				$$\mathrm{Spec}(\G(4,q)) = \{ [n]^1, [x_1^+]^n, [x_1^-]^n, [x_2^+]^n, [x_2^-]^n \}$$ 
		with 
\begin{align*} 
	\begin{split}
		&	x_1^\pm = \tfrac 12 \Big( -\sqrt{2y+\tfrac{3q}{8}} \pm \sqrt{-(2y - \tfrac{3q}{8}) + \tfrac{(3-c)q-1}{4\sqrt{2y+\tfrac{3q}{8}}}}
		\Big) -\tfrac 14, \\[1.5mm]
		&	x_2^\pm = \tfrac 12 \Big( \sqrt{2y+\tfrac{3q}{8}} \pm \sqrt{-(2y - \tfrac{3q}{8}) - \tfrac{(3-c)q-1}{4\sqrt{2y+\tfrac{3q}{8}}}}
		\Big) -\tfrac 14,
	\end{split}
\end{align*} 
where $y=-\frac{q}{16}+\omega -\frac{P}{3\omega}$ and $\omega = \sqrt[3]{-\frac Q2 \pm \sqrt{\frac{Q^2}{4}+\frac{P^3}{27}}}$
with  $P= \frac{(2q +12-4c^2)q-12}{256}$, \break 
	$Q= -\frac{q^2}{3 \cdot 256} -\frac{q\gamma}{8} - \frac{((3-c)q-1)^2}{64 \gamma}$ and $\gamma =\frac{(q+12-4c^2)q-12}{256}$, where $c,d$ are integers as in \eqref{c04d0}. 

		\item If $p\equiv 3 \pmod 4$ with $m$ even then $\mathrm{Spec}(\G(4,9))=\{[2]^3, [-1]^6\}$ and for any $q\ne 9$ we have
		$$\mathrm{Spec}(\G(4,q)) = \begin{cases} 
		\big\{ [n]^1, \big[\tfrac{\sqrt{q}-1}{4}\big]^{3n}, \big[\tfrac{-3\sqrt{q}-1}{4}\big]^n \big \} & 
		\qquad \text{for $m\equiv 0 \pmod 4$}, \\[3mm]
		\big\{ [n]^1, \big[\tfrac{3\sqrt{q}-1}{4}\big]^{n}, \big[\tfrac{-\sqrt{q}-1}{4}\big]^{3n} \big\} & 
		\qquad \text{for $m\equiv 2 \pmod 4$}.
		\end{cases}$$
	\end{enumerate}

Moreover, the graph $\G(4,q)$ is undirected with real spectrum in cases $(a)$, $(b)$, $(d)$ and $(e)$. 
In particular, the spectrum is integral in cases $(a)$ and $(e)$.
\end{thm}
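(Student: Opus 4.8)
The plan is to follow exactly the template of the proof of Theorem~\ref{gp3q}: reduce everything to the factorization over $\mathbb{Q}$ of the quartic period polynomial $\Psi_{4,q}(x)=\prod_{i=0}^{3}(x-\eta_i^{(4,q)})$ from \eqref{period polynomial}, and then feed the roots and their multiplicities into Theorem~\ref{Spectro Gkq}. Recall that that theorem gives $Spec(\G(4,q))=\{[n]^{1+\mu n},[\eta_{i_1}]^{\mu_{i_1}n},\ldots,[\eta_{i_s}]^{\mu_{i_s}n}\}$, where the $\eta_{i_j}$ are the distinct values among $\eta_0^{(4,q)},\ldots,\eta_3^{(4,q)}$ and the $\mu$'s count coincidences as in \eqref{numbers}. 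So once $\Psi_{4,q}$ is factored we know both the eigenvalues (its roots) and the multiplicities ($n$ times the multiplicity of the root, with the regular eigenvalue $n$ getting an extra $+1$). I would first dispose of the connectedness statement --- and the exceptional behaviour at $q=9$, where $\G(4,9)$ is a disjoint union of three triangles, so $n=2$ occurs with multiplicity $3$, i.e.\ $\mu=1$ --- via the primitive-divisor criterion recalled in the Introduction, together with the $k\mid\tfrac{q-1}{2}$ condition for the (un)directed character.

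The substance is importing Myerson's computation of $\Psi_{4,q}(x)$ and its rational factorizations (see \cite{My}; cf.\ the quartic lemmas in \cite{DY}), expressed through the representation $q=c^2+4d^2$ (resp.\ $\sqrt q=c^2+4d^2$ when $4\mid m$) normalized by $c\equiv1\pmod 4$ and $(c,p)=1$. These split into five shapes matching the five items of the theorem: for $p\equiv1\pmod 4$ and $4\mid m$ the polynomial is a product of four distinct linear factors (item $(a)$); for $p\equiv1\pmod 4$ and $m\equiv2\pmod 4$ it is a product of two irreducible quadratics (item $(b)$); for $p\equiv1\pmod 4$ and $m$ odd it is irreducible over $\mathbb{Q}$, with two coefficient shapes according to the parity of $n$ (items $(c)$ and $(d)$); and for $p\equiv3\pmod 4$ with $m$ even we are in the semiprimitive situation (cf.\ Theorem~\ref{semiprimitive}), where $\Psi_{4,q}$ has one simple and one triple root (item $(e)$), with $q=9$ the lone degenerate instance.

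From here the extraction of eigenvalues is mechanical in items $(a)$ and $(e)$ (read off the roots; in $(e)$ three of the $\eta_i$ coincide, so that root has multiplicity $3n$ and the other $n$, and $\mu=0$ for $q\neq 9$), and a short quadratic-formula computation in item $(b)$, where the four periods are pairwise distinct so $\mu=0$ and every multiplicity equals $n$. Items $(c)$ and $(d)$ require solving an irreducible quartic: I would clear the cubic term by $x\mapsto x-\tfrac14$, pass to the depressed quartic, solve its resolvent cubic by Cardano's formula (this is where the nested cube root $W$, resp.\ $\omega$, and the auxiliary constants $P,Q,\gamma$ enter), and then write the four roots in Ferrari form, obtaining exactly the displayed $x_1^\pm,x_2^\pm$; the parity of $n$ only alters the constant term of $\Psi_{4,q}$, hence the two parallel subcases. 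The rationality/reality trailer is then a congruence check: in $(a)$ and $(e)$ one has $\sqrt q,\sqrt[4]{q}\in\Z$, and $c\equiv1\pmod 4$ together with the equal parity of $c,d$ (forced by $q=c^2+4d^2$ with $q$ odd) make all displayed numerators divisible by $4$; in $(b)$ one has $\sqrt q\in\Z$ but the inner radicand is not a square in general, so the spectrum lies in $\mathbb{R}\smallsetminus\mathbb{Q}$; in $(c),(d)$ one shows the relevant radicands force non-real roots (equivalently, $\eta_i^{(4,q)}$ is real if and only if $-1\in R_k$).

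I expect the main obstacle to be the bookkeeping in items $(c)$ and $(d)$: running Ferrari's method on the two parity-dependent shapes of $\Psi_{4,q}$ while keeping the numerous nested radicals, the resolvent-cubic data $P,Q,\gamma,y,W$, and the sign/branch conventions under control so that they coalesce into precisely the stated closed forms --- and, as in the $k=3$ case, one must argue that a legitimate nonzero choice of the cube root $W$ always exists (both branches vanish only if two resolvent-cubic invariants vanish simultaneously, which does not occur here). Everything else reduces to quoting \cite{My} and a handful of elementary congruences.
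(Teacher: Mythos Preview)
Your proposal is correct and follows essentially the same route as the paper: quote Myerson's explicit quartic $\Psi_{4,q}(x)$ and its rational factorizations case by case, read off or solve for the roots (quadratic formula in $(b)$, Ferrari/resolvent cubic in $(c)$--$(d)$), and feed them into Theorem~\ref{Spectro Gkq}; the paper does exactly this, including the depressed-quartic computation with auxiliary constants $\alpha,\beta,\gamma,P,Q,W,y$ and the separate treatment of the $q=9$ degeneracy. One small slip: your parenthetical ``equal parity of $c,d$ forced by $q=c^2+4d^2$'' is not right (only $c$ is forced odd), but the integrality in $(a)$ and $(e)$ follows already from $c\equiv 1\pmod 4$ together with $p\equiv 1$ or $3\pmod 4$, as the paper notes.
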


\begin{proof}
The graph $\G(4,q)$ is connected (except for $q=9$) since $\frac{q-1}4$ is a primitive divisor of $q-1$ (see the Introduction). For $p$ odd and $m=2t$ even, i.e.\@ in cases ($a$), ($b$) and ($e$), one can show that $4\mid \frac{q-1}2$, and hence the graph is undirected. In fact, if $p = 4t+a$, with $a=1$ or $3$, then $p^2 \equiv a^2 \equiv 1 \pmod 8$. Thus, $p^{2s}\equiv 1 \pmod 8$ for every $s \in\N$ and $q\equiv 1 \pmod 8$. In the remaining case ($d$), the graph $\G(4,q)$ is undirected since $n=\frac{q-1}{4}$ is even. Indeed, if $2\mid \frac{q-1}{4}$, then $\frac{q-1}{4}=2t$ for some $t\in \mathbb{Z}$ which implies that $\frac{q-1}{2}=4t$ and therefore $4\mid \frac{q-1}{2}$.
Thus, $\mathrm{Spec}(\G(4,q)) \subset \mathbb{R}$  (the adjacency matrix of an undirected graph is symmetric).

Also, the graph $\G(4,9)$ is the disjoint union of three copies of $K_3$, and since $\mathrm{Spec}(K_3)=\{[2]^1, [-1]^2\}$ we get that the spectrum of $\G(4,9)$ is as stated (notice that it is still given by the corresponding formula in $(b)$, i.e.\@ $\mathrm{Spec}(\G(4,q)) = \{[n]^1, [\tfrac 14 (3\sqrt{q}-1)]^{n}, [\tfrac 14 (-\sqrt{q}-1)]^{3n} \}$ for $q=9$) since, as multisets, $\{[2]^1, [2]^2, [-1]^6\}=\{[2]^3, [-1]^6\}$.

In Theorems 14 and 17 in \cite{My} (see also Lemmas 9 and 10 in \cite{DY}) Myerson gave the polynomial $\Psi_{4,q}(x)$ and its factorizations over the rationals. 
Namely, we have 
\begin{equation} \label{Psi4q}
	\Psi_{4,q}(x) = \begin{cases}
	x^4 + x^3 - \tfrac{3q-3}8 x^2 + \tfrac{(2c-3)q+1}{16} x + \tfrac{q^2-(4c^2-8c+6)q+1}{256}  & \qquad \text{if $n$ is even}, \\[2.5mm]
	x^4 + x^3 + \tfrac{q+3}8 x^2  + \tfrac{(2c+1)q+1}{16} x + \tfrac{9q^2-(4c^2-8c-2)q+1}{256} & \qquad \text{if $n$ is odd}, \end{cases}
\end{equation}
where $c,d$ are integers uniquely determined ($d$ up to sign) such that 
$q=c^2+4d^2$, $d\equiv 1\pmod 4$ and if $p\equiv 1\pmod 4$ then $(c,p)=1$.
We have the following cases:
\begin{enumerate}[$(a)$]
	\item If $p\equiv 1 \pmod 4$ and $m\equiv 0 \pmod 4$ then $\Psi_{4,q}(x)$ equals
	{\small
	$$\tfrac{1}{64} \big((4x+1)+\sqrt{q}+2c\sqrt[4]{q} \big) \big( (4x+1)+\sqrt{q}-2c\sqrt[4]{q} \big)\big( (4x+1)-\sqrt{q}+2c\sqrt[4]{q} \big)\big( (4x+1)-\sqrt{q}-2c\sqrt[4]{q} \big),$$
where $c,d$ are integers uniquely determined by \eqref{c4d}.} \sk 
	
	\item If $p\equiv 1 \pmod 4$ and $m\equiv 2 \pmod 4$ then $\Psi_{4,q}(x)$ equals
	 $$
	 \tfrac{1}{64} \big((4x+1)^2 + 2\sqrt{q}(4x+1) -q - 2c\sqrt{q} \big) 
	 			     \big((4x+1)^2 - 2\sqrt{q}(4x+1) -q + 2c\sqrt{q} \big)$$
	 where the quadratics are irreducible over $\mathbb{Q}$ and $c,d$ are integers satisfying 
	 $q=c^2+4d^2$, $d \equiv 1\pmod 4$ and $(c,p)=1$. \sk 
	 
	\item If $p\equiv 1 \pmod 4$ and $m$ odd then $\Psi_{(4,q)}(x)$ is irreducible over $\mathbb{Q}$. \sk 
	
	\item If $p\equiv 3 \pmod 4$ and $m$ even then
	$$\Psi_{4,q}(x)= \begin{cases}
	\tfrac{1}{64} (4x+1+3\sqrt{q}) (4x+1-\sqrt{q})^3 & \qquad \text{if $\frac m2$ is even}, \\[1.5mm]
	\tfrac{1}{64} (4x+1-3\sqrt{q}) (4x+1+\sqrt{q})^3 & \qquad \text{if $\frac m2$ is odd}. 
	\end{cases}$$
\end{enumerate}

Thus, the eigenvalues and multiplicities of $\G(4,q)$ are directly obtained from these expressions in cases $(a)$ and $(d)$. 
In case $(b)$, a routine calculation shows that the roots of these quadratics are respectively given by 
	$$\tfrac{-(1+\sqrt q) \pm \sqrt{2(q+c\sqrt{q})}}{4} \qquad \text{and} \qquad \tfrac{-(1-\sqrt q) \pm \sqrt{2(q-c\sqrt{q})}}{4},$$
from which the spectrum in this case readily follows.
	
We now consider cases $(c)$ and ($d$). The roots of a general quartic 
	$$Ax^4+Bx^2+Cx^2+Dx+E$$ 
are given by
\begin{align} \label{x_i's}
	\begin{split}
		&	x_1^\pm = \tfrac 12 \Big( -\sqrt{2y-\alpha} \pm \sqrt{-(2y + \alpha) + \tfrac{2\beta}{\sqrt{2y-\alpha}}}
		\Big) -\tfrac 14, \\[1.5mm]
		&	x_2^\pm = \tfrac 12 \Big( \sqrt{2y-\alpha} \pm \sqrt{-(2y + \alpha) - \tfrac{2\beta}{\sqrt{2y-\alpha}}}
		\Big) -\tfrac 14,
	\end{split}
\end{align}
where 
	\begin{equation} \label{y&W}
		y = \tfrac{\alpha}6 + W - \tfrac{P}{3 W} \qquad \text{and} \qquad 
		W = \sqrt[3]{-\tfrac Q2 \pm \sqrt{\tfrac{Q^2}{4} + \tfrac{P^3}{27}}}
	\end{equation} 
with $P=-\frac{\alpha^2}{12}-\gamma$ and $Q= -\frac{\alpha^2}{108} + \frac{\alpha\gamma}{3} - \frac{\beta^2}{\gamma}$
where
	$$\alpha = -\tfrac{3B^2}{8A^2} + \tfrac CA, \quad \beta  =  \tfrac{B^3}{8A^3} -\tfrac{BC}{2A^2} -\tfrac{D}{A}, 
	\quad \text{and} \quad 
	\gamma = -\tfrac{3B^4}{256A^4} - \tfrac{B^2C}{16A^3} -\tfrac{BD}{4A^2}+ \tfrac EA.$$

	If $n$ is odd, by using the second line in \eqref{Psi4q}, one can check that 
\begin{gather*}
	\alpha = \tfrac q8, \qquad \beta  =  \tfrac{(c-1)q+1}8, \qquad \gamma = \tfrac{(9q-4c^2-4)q-12}{256}, \\
		 P = \tfrac{36- (28q -12c^2 -12 )q}{256 \cdot 3}, 	\qquad \text{and} \qquad 
		 Q = -\tfrac{q^2}{27 \cdot 256} + \tfrac{q\gamma}{24} - \tfrac{((c-1)q+1)^2}{64 \gamma};
\end{gather*}
while if $n$ is even, by the first line in \eqref{Psi4q}, one has that 
\begin{gather*}
\alpha = \tfrac {-3q}{8}, \qquad \beta  =  \tfrac{(3-c)q-1}8, \qquad \gamma = \tfrac{(q+12-4c^2)q-12}{256}, \\ 
	 P = \tfrac{(2q +12-4c^2)q-12}{256}, 
	 \qquad \text{and} \qquad 
     Q = -\tfrac{q^2}{3 \cdot 256} - \tfrac{q\gamma}{8} - \tfrac{((3-c)q-1)^2}{64 \gamma}.
\end{gather*}
Putting this information in \eqref{x_i's} and \eqref{y&W} we get the desired result.

Finally, the spectra in cases ($a$) and $(e)$ are integral by the conditions on $p$ and $m$, where in case ($a$) we use that $c\equiv 1 \pmod 4$). 
\end{proof}

Using the theorem one can compute, for instance, the following spectra: $\mathrm{Spec}(\G(4,5^{4m}))$ with item $(a)$, $\mathrm{Spec}(\G(4,5^{4m+2}))$ with item $(b)$, $\mathrm{Spec}(\G(4,5^{2m+1}))$ with items $(c)$ and $(d)$ and $\mathrm{Spec}(\G(4,7^{2m}))$ with item $(e)$, for any $m\in \N$.

\begin{rem} \label{rem g3g4}
($i$) Theorems \ref{gp3q} and \ref{gp4q} extend the results obtained in Theorems~2.2 and 2.4 in \cite{PV3}, valid for $k\mid \frac{q-1}{p-1}$, to the general case $k\mid q-1$. As we will see in the next section, the cases considered in \cite{PV3} correspond to those with integral spectrum, and this explains why we were able to obtain them via weight distribution of codes. 
The general case (i.e.\@ with non-integral spectrum), however, cannot be related with weight distribution of codes.

\noindent 
($ii$) 
Note that the spectra of $\G(3,q)$ in Theorem \ref{gp3q} ($b$) and of $\G(4,q)$ in Theorem \ref{gp4q} ($d$) are real, although this may not look so from the expressions. For instance, in the case ($b$) of Theorem \ref{gp3q}, notice that $|W|^{2}=q$ and hence 
$\frac{q}{\omega^j W} =\overline{\omega^{j} W}$. This implies that 
	$$\omega^{j}W+\tfrac{q}{\omega^{j}W}=2\, \mathrm{Re} (\omega^{j} W)$$ 
for any $0\le j \le 2$ and therefore $\G(3,q)$ is real. However, to check that $\G(4,q)$ in case $(d)$ has real spectrum directly may be quite difficult.
\end{rem}

Now, using a result of Hoshi \cite{Ho} we can give the spectrum of $\G(5,q)$ in the case $q=p^m$ with $p\equiv 1 \pmod 5$ and $5\mid m$. 
In Theorem 1 in \cite{Ho}, for $p\equiv 1 \pmod 5$ and $m=5s$, Hoshi obtained the factorization of 
the period polynomial in the reduced form 	
	$$\Psi_{5,p^{m}}^*(X)= \prod_{i=1}^5 (X-\eta_i^*),$$ 
where $\eta_i^*=5\eta_i+1$, in terms of solutions of the so-called Dickson's system of Diophantine equations:
\begin{equation} \label{dickson}
	\begin{cases}
		16p^{m}=x^2+125w^2+50v^2+50u^2, \\
		xw=v^2-4uv-u^2, \\
		x\equiv -1 \pmod 5.
	\end{cases}
\end{equation} 
If we denote by $S(p,m)$ the set of all integer solutions of this system, it is known that $\#S(p,m)=(m+1)^2$. Moreover, the system have exactly four integer solutions satisfying $p\nmid x^2-125w^2$ and the set of these solutions is denoted by $S(p,m)^U$.

\begin{prop} \label{G5}
Let $q=p^{5s}$ with $s\in \N$, $p$ prime of the form $p\equiv 1 \pmod 5$, and put $n=\frac{q-1}5$. Then, we have
	$$\mathrm{Spec}(\G(5,q)) = \big\{ [n]^1, [\tfrac{\eta_0^*-1}5]^n, [\tfrac{\eta_1^*-1}5]^n, [\tfrac{\eta_2^*-1}5]^n, [\tfrac{\eta_3^*-1}5]^n, [\tfrac{\eta_4^*-1}5]^n \big\}$$
where 
	$$\eta_0^* = -\tfrac{1}{16}p^s(x^3-25L)
	\qquad \text{and} \qquad 
	\eta_i^* = \tfrac{1}{64}p^s (x^3-25M) \sigma^i \quad (0 \le i \le 3),$$
with $\sigma$ the non-singular linear transformation of order 4 given by $\sigma(x,w,v,u)=(x,-w,-u,v)$ and 
	\begin{equation*}
	\begin{aligned} 
	L = & 2x(v^2+u^2) +5w(11v^2-4vu-11u^2), \\
	M = & 2x^2u+7xv^2+20xvu-3xu^2+125w^3 + 200w^2v \\ 
	& -150w^2u + 5wv^2-20wvu -105wu^2 -40v^3-60v^2u+120vu^2+20u^3,	
	\end{aligned}			
	\end{equation*}
	for $(x,w,v,u)$ any solution of \eqref{dickson} such that $p\nmid x^2-125w^2$.
\end{prop}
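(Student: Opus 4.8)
The plan is to combine Theorem~\ref{Spectro Gkq} with Hoshi's factorization of the reduced period polynomial. By Theorem~\ref{Spectro Gkq}, once we know the cyclotomic Gaussian periods $\eta_0^{(5,q)},\dots,\eta_4^{(5,q)}$ for $q=p^{5s}$ with $p\equiv 1\pmod 5$, the spectrum of $\G(5,q)$ is $\{[n]^{1+\mu n},[\eta_{i_1}]^{\mu_{i_1}n},\dots\}$; so the work reduces to (i) recording the $\eta_i$ explicitly and (ii) checking that no $\eta_i$ equals $n$ and that they are pairwise distinct, so that $\mu=0$ and each $\mu_{i_j}=1$, giving the clean form $\{[n]^1,[\eta_0]^n,\dots,[\eta_4]^n\}$ as stated. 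First I would invoke Theorem~1 of~\cite{Ho}: for $p\equiv 1\pmod 5$ and $m=5s$, the reduced period polynomial $\Psi_{5,p^m}^*(X)=\prod_{i}(X-\eta_i^*)$ with $\eta_i^*=5\eta_i+1$ factors over $\mathbb{Q}(p^s)$ into linear factors whose roots are exactly the $\eta_i^*$ displayed in the statement, expressed through any solution $(x,w,v,u)\in S(p,m)^U$ of Dickson's system~\eqref{dickson} and the order-$4$ transformation $\sigma(x,w,v,u)=(x,-w,-u,v)$. Translating back via $\eta_i=\tfrac{1}{5}(\eta_i^*-1)$ yields the six entries $[\tfrac{\eta_i^*-1}{5}]^n$ in the claimed spectrum (the indices $0,1,2,3,5$ matching Hoshi's indexing of the orbit of $\sigma$ on the non-distinguished root together with the distinguished one $\eta_0^*$).

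Next I would deal with connectivity and multiplicities. The graph $\G(5,q)$ is connected because $n=\tfrac{q-1}{5}$ is a primitive divisor of $q-1$: since $5\mid m$ and $p\equiv 1\pmod 5$, one checks $\tfrac{1}{5}(p^m-1)\nmid p^a-1$ for $a<m$ exactly as in the introduction, so by Theorem~\ref{Spectro Gkq} $\mu=0$ and $n$ has multiplicity $1$. For the remaining eigenvalues I must argue the five Gaussian periods are pairwise distinct and none equals $n$. That none equals $n$ is a size/arithmetic comparison: $\eta_i^{(5,q)}$ lies in the real subfield and, by~\eqref{int gp} with $N=\gcd(\tfrac{q-1}{p-1},5)$, the relevant integrality forces $|\eta_i|$ to be of order $\sqrt[5]{q}$ up to the factor coming from the trace-tower, which is far smaller than $n=\tfrac{q-1}{5}$ for $q\ge 11$; alternatively one uses $\sum_i\eta_i=-1$ together with the explicit $p^s$-scaling in Hoshi's formulas. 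Pairwise distinctness follows because $\sigma$ acts with order $4$ on the tuple and $p\nmid x^2-125w^2$ guarantees the "generic" solution, so the four conjugate roots $\eta_1^*,\dots$ are genuinely different and the distinguished root $\eta_0^*$ is separated from them; this is exactly the content of Hoshi's statement that $\Psi_{5,p^m}^*$ has simple roots in this case. Since $q=5n+1$, $\mu=0$ and all $\mu_{i_j}=1$ is then automatic.

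The main obstacle will be the bookkeeping in step (i): correctly matching Hoshi's normalization of $\Psi_{5,p^m}^*(X)$ and his ordering conventions for the five roots (which root is the "distinguished" $\eta_0^*$ and how $\sigma$ permutes the other four) to our indexing $\eta_0^{(5,q)},\dots,\eta_4^{(5,q)}$, so that the explicit polynomial expressions $L$ and $M$ and the scaling factors $\tfrac{1}{16}p^s$ and $\tfrac{1}{64}p^s$ come out verbatim. This is a translation-of-notation issue rather than a conceptual one: once the dictionary $\eta_i=\tfrac{1}{5}(\eta_i^*-1)$, the identification of $\mathbb{Q}(\zeta_p)\cap\mathbb{R}$ as the field containing the periods, and the fact $4p^m$-type relations of~\eqref{dickson} reduce modulo the Galois action are all in place, the result is immediate from Theorem~\ref{Spectro Gkq}. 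I would close by remarking (as the statement implicitly does) that the answer is independent of which solution in $S(p,m)^U$ is chosen, since different choices are related by $\sigma$ and by $w\mapsto -w$, which permute the $\eta_i^*$ among themselves and hence leave the multiset $\{\eta_i\}$ — i.e.\ the spectrum — unchanged.
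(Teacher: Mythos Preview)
Your approach is essentially the same as the paper's: the paper's proof is the single sentence ``It follows by a direct application of Theorem~\ref{Spectro Gkq} and Theorem~1 in \cite{Ho},'' and you are carrying out precisely that application, with the added care of justifying $\mu=0$ and the pairwise distinctness of the periods (points the paper leaves implicit). Your elaboration is sound and in fact more detailed than what the paper provides; the only caveat is that your size estimate ``$|\eta_i|$ is of order $\sqrt[5]{q}$'' is not quite the right heuristic (the $p^s$ scaling together with $x\asymp p^{5s/2}$ from \eqref{dickson} needs a bit more care), but the alternative route you mention---connectivity of $\G(5,q)$ forcing $\mu=0$---is clean and suffices.
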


\begin{proof}
	It follows by a direct application of Theorem \ref{Spectro Gkq} and Theorem 1 in \cite{Ho}.
\end{proof}

Relative to the spectrum of $\G(5,p^m)$, the case $p\equiv 4 \pmod 5$, i.e.\@ $p\equiv -1 \pmod 5$, corresponds to the semiprimitive one, and hence it is obtained by taking $k=5$ in Theorem~\ref{semiprimitive} ahead, while the cases $p\equiv 2,3 \pmod 5$ remain open in general since the Gaussian periods in these cases are unknown (except in the semiprimitive case), to our best knowledge. 

\begin{exam} \label{Ex:G5}
Here we give the spectrum of $\G(5,11^5)$.
By the last Example in Section 5 in \cite{Ho} we have that 
$S(11,5)^U=\langle (-396,-100,150,-30)\rangle$, where $\langle (x,w,v,u) \rangle$ denotes the orbit of the solution $(x,w,v,u)$ of \eqref{dickson}, and that 
	$$\Psi_{5,11^5}^*(X)=(X+99)(X+649)(X+979)(X-451)(X-1276).$$
In this way, by Proposition \ref{G5} we have 
$$\mathrm{Spec}(\G(5,11^5))=\{ [n]^1, [255]^{n}, [90]^{n}, [-20]^{n}, [-130]^{n}, [-196]^{n}\}$$ 
where $n=\frac{11^5-1}5=32210$.
\hfill $\lozenge$
\end{exam}

\begin{rem}[\textit{The spectrum of $\G(6,q)$, $\G(8,q)$ and $\G(12,q)$}] \label{g6812q}
The (reduced) period polynomials 
	$$\Psi_{k,q}^*(x)= \prod_{i=1}^k (x-\eta_i^*) \qquad \text{where} \qquad \eta_i^*=k\eta_i+1,$$ 
for $k=6,8,12$ and its factorizations into irreducible polynomials over $\mathbb{Z}$ were obtained by S.\@~Gurak in two papers from 2001 and 2004. 
He first considered the case $q=p^2$ and gave the factorizations (see Propositions 3.1, 3.2 and 3.3 in \cite{Gu1}). The general case is treated in \cite{Gu2}. The results, which are very technical, are given in Propositions 3.2, 3.3 and 3.5 (their descriptions are out of the scope of this paper).
However, since the involved irreducible polynomials are of degree $\le 4$, it is possible in principle to compute all their roots and, hence, to obtain the spectrum of $\G(k,q)$ for $k=6,8,12$.
\end{rem}

\begin{rem}
Using Theorem \ref{Spectro Gkq} one can obtain the spectrum of $\bar \G(k,q)$ and $\G^+(k,q)$ for $k=3,4,5$ enhancing Theorems \ref{gp3q} and \ref{gp4q} and Proposition \ref{G5}. Similarly for the graphs $\G(k,q)$ with $k=6,8,12$ in Remark \ref{g6812q}.	
\end{rem}

We close the section with a comment on Waring numbers $g(k,q)$ over a finite field $\ff_q$. 
The Waring number $g(k,q)$ is defined to be the minimal $g \in \N$ (if it exists) such that every element of $\ff_q$ is a sum of a number $g$ of $k$-th powers in $\ff_q$. It is well-known that $g(k,q) \le k$. 
\begin{rem} 
The Waring number $g(k,q)$ is exactly the diameter of $\G(k,q)$ (see \cite{PV6}). 
For a general graph $G$ one has that diam$(G) \le t-1$, where $t$ is the number of distinct non-principal eigenvalues of $G$ (see for instance Theorem 3.13 in \cite{CDS}), with equality if $G$ is a distance regular graph (see \S \ref{sec:6.2}).
Hence, we have that 
	$$g(k,q) = \mathrm{diam}(\G(k,q)) \le s \le k,$$
where $s$ is the number of distinct Gaussian periods (see \eqref{Gaussian periods diferentes}).
Thus, if $\G(k,q)$ is a distance regular graph then $g(k,q)=s$ (this occurs for instance in the semiprimitive case or in the Hamming case).
\end{rem}

\section{All integral generalized Paley graphs} \label{sec:4}
In this section, we classify those GP-graphs having integral spectrum, by way of period polynomials. 
The study of integral graphs is an interesting topic of research on its own, initiated by Harary and Schwenk back in the 70's 
(see \cite{HS}).

In 1981, Myerson proved that the period polynomial $\Psi_{k,q}(x)$ in \eqref{period polynomial} 
has integral coefficients, that is, $\Psi_{k,q}(x)\in \mathbb{Z}[x]$ (see \cite[Theorem 3]{My}). 
Hence, since $\Psi_{k,q}(x)$ is monic we have that all of the Gaussian periods $\eta_{i}^{(k,q)}$ are algebraic integers for all $i=0,\ldots,k-1$. 
Also, he showed that in general the period polynomial 
$\Psi_{k,q}(x)$ splits over $\mathbb{Q}$ in $N$ factors of degree $\frac{k}{N}$ (see \cite[Theorem 4]{My}).
Moreover, he showed that 
\begin{equation} \label{Myerson}
\Psi_{k,q}(x) = \prod_{i=0}^{N-1} \psi^{(i)}_{(k,q)}(x) \qquad \text{with} \qquad \psi^{(i)}_{(k,q)}(x) = \prod_{\ell=0}^{\frac{k}{N}-1}(x-\eta_{i+\ell N}^{(k,q)})\in \mathbb{Z}[x]
\end{equation}
where $\psi^{(i)}_{(k,q)}(x)$ is irreducible or a power of an irreducible polynomial over $\mathbb{Q}$. We will use these facts in the section.

As we have already mentioned in Remark \ref{Spec remarks}, $\G(k,q)$ is integral if and only if $\G^+(k,q)$ is integral (or if $\bar \G(k,q)$ is integral).
By studying the period polynomial of GP-graphs we can now characterize all integral GP-graphs (and hence all integral GP$^+$-graphs). 

\begin{thm} \label{Teo: GP enteros}
Let $q=p^m$ with $p$ prime and $m\in \N$ and let $k\in \N$ such that $k \mid q-1$. 
Then, the generalized Paley graph $\G(k,q)$ is integral if and only if $k$ divides $\tfrac{q-1}{p-1}$; i.e.\@
	\begin{equation} \label{int cond}
		\mathrm{Spec}(\G(k,q)) \subset \mathbb{Z} \qquad \Leftrightarrow \qquad k \mid \tfrac{q-1}{p-1} \qquad \Leftrightarrow \qquad
		\eta_{i}^{(k,q)} \in \mathbb{Z} \quad (0 \le i \le k-1).
	\end{equation}
In particular, all directed GP-graphs are not integral. 
\end{thm}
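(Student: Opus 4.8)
The plan is to read the eigenvalues off Theorem~\ref{Spectro Gkq}, reducing the statement to the arithmetic of the Gaussian periods $\eta_i^{(k,q)}$, and then to extract integrality from Myerson's factorization \eqref{Myerson} of the period polynomial together with the normalization $\sum_i \eta_i^{(k,q)}=-1$ from \eqref{sum etas=1}. Throughout I would write $N=\gcd(\tfrac{q-1}{p-1},k)$ as in \eqref{N}, so that the claim becomes: $\G(k,q)$ is integral iff $N=k$.

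First I would record the reduction. By Theorem~\ref{Spectro Gkq} the eigenvalues of $\G(k,q)$ are $n=\tfrac{q-1}{k}\in\Z$ together with the Gaussian periods $\eta_i^{(k,q)}$, so $\G(k,q)$ is integral if and only if $\eta_i^{(k,q)}\in\Z$ for all $i$; and since $\Psi_{k,q}(x)\in\Z[x]$ is monic, the periods are algebraic integers, so for them ``rational'' and ``integral'' coincide. The sufficiency direction is then immediate: if $k\mid\tfrac{q-1}{p-1}$ then $N=k$, and \eqref{int gp} applied with $N=k$ gives $\eta_i^{(k,q)}=\eta_i^{(N,q)}\in\Z$ for every $i$, hence $\G(k,q)$ is integral.

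For the converse I would argue as follows. Assume $\G(k,q)$ is integral, so $\Psi_{k,q}(x)$ splits into linear factors over $\Z$. By \eqref{Myerson}, $\Psi_{k,q}=\prod_{i=0}^{N-1}\psi^{(i)}_{(k,q)}$ where each $\psi^{(i)}_{(k,q)}$ is monic of degree $k/N$ and is a power of an irreducible polynomial over $\mathbb{Q}$. A power of an irreducible polynomial can have a rational root only if that irreducible factor is linear, so each block must in fact be $\psi^{(i)}_{(k,q)}(x)=(x-a_i)^{k/N}$ for some $a_i\in\Z$; hence $\Psi_{k,q}(x)=\prod_{i=0}^{N-1}(x-a_i)^{k/N}$. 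Comparing the coefficient of $x^{k-1}$ on the two sides and using $\sum_{i=0}^{k-1}\eta_i^{(k,q)}=-1$ from \eqref{sum etas=1}, I would obtain $\tfrac{k}{N}\sum_{i=0}^{N-1}a_i=-1$; since $\sum_i a_i\in\Z$ this forces $k/N=1$, i.e.\ $k\mid\tfrac{q-1}{p-1}$.

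Finally, for the last assertion: if $\G(k,q)$ is directed then $q$ is odd and $k\nmid\tfrac{q-1}{2}$ (see the Introduction); as $p$ is odd, $\tfrac{q-1}{p-1}$ divides $\tfrac{q-1}{2}$, so $k\mid\tfrac{q-1}{p-1}$ would force $k\mid\tfrac{q-1}{2}$, a contradiction, whence no directed GP-graph is integral. I expect the main obstacle to lie in the converse direction: the delicate point is the collapse of each block $\psi^{(i)}_{(k,q)}$ to a pure $(k/N)$-th power $(x-a_i)^{k/N}$, which is where the ``power of an irreducible'' clause of Myerson's theorem has to be invoked carefully (alternatively one could deduce the same collapse from the action of $\gal(\mathbb{Q}(\zeta_p)/\mathbb{Q})$ on the periods, after identifying the Galois orbit of $\eta_i^{(k,q)}$ with $\{\eta_{i+\ell N}^{(k,q)}\}_\ell$). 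Once that collapse is in hand, the rigid identity $\sum_i\eta_i^{(k,q)}=-1$ is exactly what is incompatible with $k/N\ge 2$, and the proof closes.
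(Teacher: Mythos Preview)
Your proof is correct and follows essentially the same line as the paper: both reduce to integrality of the periods via Theorem~\ref{Spectro Gkq}, get sufficiency from \eqref{int gp}, and for the converse collapse each block $\psi^{(i)}_{(k,q)}$ to $(x-a_i)^{k/N}$ and then derive a contradiction from the coefficient of $x^{k-1}$ together with \eqref{sum etas=1}. The only cosmetic difference is in how the collapse is justified: the paper invokes the $\gal(\mathbb{Q}(\zeta_p)/\mathbb{Q})$-action on $\{\eta_{i+\ell N}^{(k,q)}\}_\ell$ (Myerson's Lemmas~2 and~5) to force all periods in a block to coincide, whereas you use directly the ``power of an irreducible'' clause of \eqref{Myerson}; you yourself note this alternative, and the two arguments are interchangeable.
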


\begin{proof}
Expression \eqref{spec Gkq} gives the spectra of $\G(k,q)$ in terms of the Gaussian periods $\eta_{i}^{(k,q)}$.
By \eqref{N} and \eqref{int gp} we know that $\eta_i^{(N,q)} \in \Z$ where 
	$N=\gcd(\tfrac{q-1}{p-1},k)$. 
Thus, if $k$ satisfies $k \mid \frac{q-1}{p-1}$ then $k=N$ and hence all the Gaussian periods 
$\eta_i^{(k,q)}$ are integers, by \eqref{int gp}. This implies that $\mathrm{Spec}(\G(k,q))$ is integral. 
	
Now, assume that $\G(k,q)$ is integral, then $\eta_i^{(k,q)}\in \mathbb{Z}$ for all  $i=0,\ldots,k-1$.
Suppose by contradiction that $N<k$. By definition of $N$, there is an integer $L$ such that $k=LN$, i.e.\@ $L=\frac{k}{N}>1$. 
Now, it is known that the Galois group $\textrm{Gal}(\mathbb{Q}(\zeta_p)/\mathbb{Q})$ of the cyclotomic field extension $\mathbb{Q}(\zeta_p)/\mathbb{Q}$ permutes all of the elements in the set 
	$$\{\eta_{i}^{(k,q)}, \eta_{i+N}^{(k,q)},\eta_{i+2N}^{(k,q)},\ldots,\eta_{i+(L-1)N}^{(k,q)} \}$$ 
(see Lemmas 2 and 5 in \cite{My}).
Thus, since $\eta_i^{(k,q)} \in \mathbb{Z}$, we have that $\eta_i^{(k,q)}$ is fixed by all the elements in $\textrm{Gal}(\mathbb{Q}(\zeta_p)/\mathbb{Q})$ and hence we have that $\eta_{i+\ell N}^{(k,q)}= \eta_i^{(k,q)}$ for all $\ell =0,\ldots,L-1$ and so we get
\begin{equation*}
	\psi_{(k,q)}^{(i)}(x) = (x-\eta_{i}^{(k,q)})^L 
\end{equation*}
for $i=0,\ldots, N-1$. Then, by \eqref{Myerson}, we obtain that
\begin{equation}\label{eq: psi power}
	\Psi_{k,q}(x)= p(x)^{L} \qquad \text{where} \qquad p(x)=\prod_{i=0}^{N-1} (x-\eta_{i}^{(k,q)}) \in \mathbb{Z}[x].
\end{equation}
On the other hand, since $-\sum_{i=0}^{k-1}\eta_{i}^{(k,q)}=1$ by \eqref{sum etas=1}, 
we obtain that the term corresponding to $x^{k-1}$ in $\Psi_{k,q}(x)$ is $1$.
Finally, if $b$ denotes the term of $p(x)$ corresponding to $x^{N-1}$, then
the equation \eqref{eq: psi power} implies that $1= bL$ which is absurd since $b,L \in \mathbb{Z}$ and $L>1$.
Therefore, we must have that $k=N$, and hence $k\mid \frac{q-1}{p-1}$, as desired.  
	
The remaining assertion is clear. 	
Indeed, $\G(k,q)$ is directed if and only if $q=p^m$ is odd with $k \nmid \frac{q-1}{2}$.
Thus, if $\G(k,q)$ were integral then $k\mid \frac{q-1}{p-1}$ and since $\frac{q-1}{p-1} \mid \frac{q-1}{2}$, we have that $k \mid
\frac{q-1}{2}$, which is absurd. 
Hence, all directed GP-graphs are non-integral. 
\end{proof}

In the next example we recap integral GP-graphs $\G(k,q)$ with $k=1,2,3,4$ and check the arithmetic condition \eqref{int cond} in the theorem.

\begin{exam} \label{spec G2q}
($i$) 
In Example \ref{g1q} we saw that the graphs $\G(1,q)=K_q$ are integral and the condition $1\mid \frac{q-1}{p-1}$ is trivial. 

\noindent ($ii$) 
In Example \ref{Ex:Paley} we saw that the graphs $\G(2,q)$ are integral if and only if $q\equiv 1 \pmod 4$ (hence $p\equiv 1 \pmod 4$ or $p\equiv 3 \pmod 4$ and $m=2t$), that is when $\G(2,q)$ is the classic Paley graph. It is easy to see that $2\mid \tfrac{q-1}{p-1}$ if and only if $p\equiv 1 \pmod 4$ or $p\equiv 3 \pmod 4$ and $m=2t$. 

\noindent ($iii$) 
In Theorem \ref{gp3q} we showed that $\G(3,q)$ is integral for $p\equiv 1 \pmod 3$ with $3\mid m$ and for $p\equiv 2 \pmod 3$ and $m$ even (and not for $p\equiv 1 \pmod 3$ with $3 \nmid m$). 
It is easy to see that these conditions are equivalent to 
$3\mid \tfrac{q-1}{p-1}$. 

\noindent ($iv$) 
In Theorem \ref{gp4q} we showed that $\G(4,q)$ is integral for $p\equiv 1 \pmod 4$ with $m\equiv 0 \pmod 4$ or $p\equiv 3\pmod 4$ and not for $p\equiv 1 \pmod 4$ with $m\equiv 2 \pmod 4$. One can check that $4\mid \tfrac{q-1}{p-1}$ if and only if $p\equiv 1 \pmod 4$ with $m\equiv 0 \pmod 4$ or $p\equiv 3\pmod 4$.
\hfill $\lozenge$
\end{exam}

\begin{exam} \label{Ex Hamming}
A Hamming graph $H(b,q)$ is a graph with vertex set $V=K^b$ where $K$ is any set of size $q$
(typically $\ff_q$ in applications), and where two $b$-tuples form an edge if and only if they differ in exactly one coordinate.
Notice that $H(b,q)=\square^b K_{q}$ and hence, Hamming graphs are integral with spectrum given by 
\begin{equation*} \label{Spec Hamming}
	\mathrm{Spec}(H(b,q)) = \big\{ [\ell q-b]^{\binom{b}{\ell}(q-1)^{b-\ell}} : 0\le \ell \le b \big\}. 
\end{equation*}	
Those connected GP-graphs $\G(k,q)$ which are Hamming were classified by Lim and Praeger in \cite{LP}. In this case $k=\tfrac{p^{bm}-1}{b(p^m-1)}$ for integers $b \mid \tfrac{p^{bm}-1}{p^m-1}$, $q=p^{bm}$, and 
\begin{equation*} \label{Hamming}
	\G(\tfrac{p^{bm}-1}{b(p^m-1)}, p^{bm}) = H(b,p^{m}).
\end{equation*}	
It is clear that $\tfrac{p^{bm}-1}{b(p^m-1)} \mid \frac{p^{bm}-1}{p-1}$ and hence Theorem \ref{Teo: GP enteros} implies that $\G(\tfrac{p^{bm}-1}{b(p^m-1)}, p^{bm})$ is an integral graph. \hfill $\lozenge$
\end{exam}

When $\mathrm{Spec}(\G(k,q))$ is not integral, the graph has at least one irrational Gaussian period.
\begin{coro}
	Let $q=p^m$ with $p$ prime and let $k\in \N$ such that $k \mid q-1$.
	If $k \nmid \frac{q-1}{p-1}$, then there exists at least one $j \in \{0,\ldots,k-1\}$ such that $\eta_{j}^{(k,q)} \not \in \mathbb{Q}$.
\end{coro}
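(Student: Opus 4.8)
The plan is to derive the corollary directly from Theorem~\ref{Teo: GP enteros} by contraposition. Suppose, for the sake of contradiction, that $\eta_j^{(k,q)} \in \mathbb{Q}$ for every $j \in \{0,\ldots,k-1\}$. Since each $\eta_j^{(k,q)}$ is an algebraic integer (being a root of the monic polynomial $\Psi_{k,q}(x) \in \mathbb{Z}[x]$, by Myerson's Theorem~3 in \cite{My}), a rational algebraic integer must in fact lie in $\mathbb{Z}$. Hence all the Gaussian periods $\eta_j^{(k,q)}$ would be integers.

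Now I would invoke the equivalence already established in \eqref{int cond}: the condition $\eta_i^{(k,q)} \in \mathbb{Z}$ for all $0 \le i \le k-1$ is equivalent to $k \mid \tfrac{q-1}{p-1}$. Therefore, assuming all periods are rational forces $k \mid \tfrac{q-1}{p-1}$, contradicting the hypothesis $k \nmid \tfrac{q-1}{p-1}$. This contradiction shows that at least one $\eta_j^{(k,q)} \notin \mathbb{Q}$, as claimed. Alternatively, one can phrase the argument purely in terms of the spectrum: if all periods were rational they would all be integers, so $Spec(\G(k,q))$ would be integral by \eqref{spec Gkq}, and then Theorem~\ref{Teo: GP enteros} would give $k \mid \tfrac{q-1}{p-1}$, again a contradiction.

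There is essentially no obstacle here; the only point requiring a moment's care is the passage from ``rational'' to ``integral,'' which relies on the fact that the $\eta_j^{(k,q)}$ are algebraic integers — a fact recorded immediately before the theorem in the excerpt (the monicity and integrality of $\Psi_{k,q}(x)$). Once that is noted, the corollary is an immediate restatement of the contrapositive of Theorem~\ref{Teo: GP enteros} together with \eqref{int cond}.
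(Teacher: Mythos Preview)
Your proof is correct and follows essentially the same approach as the paper: both arguments combine the equivalence in \eqref{int cond} from Theorem~\ref{Teo: GP enteros} with the fact that the Gaussian periods are algebraic integers (so rational implies integral). The only cosmetic difference is the order of the two steps --- the paper first deduces from \eqref{int cond} that some $\eta_j \notin \mathbb{Z}$ and then upgrades this to $\eta_j \notin \mathbb{Q}$, whereas you assume all periods are rational, upgrade to integral, and then invoke \eqref{int cond} for the contradiction.
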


\begin{proof}
	Since $k\nmid \frac{q-1}{p-1}$ we know that there is some $j\in \{0,\ldots,k-1\}$ such that 
	$\eta_{j}^{(k,q)} \not \in \mathbb{Z}$, by \eqref{int cond}. 
	Since the Gaussian periods are algebraic integers, if $\eta_{j}^{(k,q)} \in \mathbb{Q}$ then $\eta_{j}^{(k,q)} \in \mathbb{Z}$. Thus, there exists some $j\in \{0,\ldots,k-1\}$ such that $\eta_{j}^{(k,q)} \not \in \mathbb{Q}$, as we wanted to see. 
\end{proof}

Let $q=p^m$ with $p$ prime, let $k\in \N$, and consider the following condition 
\begin{equation} \label{conditions}
	p \equiv 1 \!\!\pmod k \quad \text{and} \quad k\mid m \qquad \quad \text{or} \qquad \quad p\not \equiv 1 \pmod k.
\end{equation} 
The following corollary of Theorem \ref{Teo: GP enteros} characterizes integral GP-graphs $\G(k,q)$ in terms of condition \eqref{conditions}.

\begin{coro}\label{Coro flia int p1}
Let $\G(k,q)$ be a GP-graph with $q=p^m$ and $p$ prime. 
If $p\equiv 1 \pmod k$ then $\mathrm{Spec}(\G(k,q)) \subset \Z$ if and only if $k\mid m$.
Furthermore, we have:
\begin{enumerate}[$(a)$]
	\item If $\mathrm{Spec}(\G(k,q)) \subset \Z$ then condition \eqref{conditions} holds. \msk
	
	\item If $k$ is prime and condition \eqref{conditions} holds then $\mathrm{Spec}(\G(k,q)) \subset \Z$.
\end{enumerate}
In particular, if $k$ is prime then $\mathrm{Spec}(\G(k,q)) \subset \Z$ if and only if condition \eqref{conditions} holds.
\end{coro}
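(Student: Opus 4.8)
The plan is to reduce the entire statement to the arithmetic criterion of Theorem~\ref{Teo: GP enteros}, namely that $\G(k,q)$ is integral if and only if $k\mid\frac{q-1}{p-1}$, and then to read the geometric sum $\frac{q-1}{p-1}=1+p+p^{2}+\cdots+p^{m-1}$ modulo $k$. (As for every GP-graph, we work under the standing assumption $k\mid q-1$.)

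First I would settle the opening equivalence. If $p\equiv1\pmod k$ then $p^{j}\equiv1\pmod k$ for all $j$, so $\frac{q-1}{p-1}\equiv m\pmod k$; hence $k\mid\frac{q-1}{p-1}\iff k\mid m$, and Theorem~\ref{Teo: GP enteros} rewrites this as ``$Spec(\G(k,q))\subset\Z$ if and only if $k\mid m$''. Part $(a)$ is then immediate: if $\G(k,q)$ is integral and $p\not\equiv1\pmod k$, the second alternative of \eqref{conditions} holds; while if $p\equiv1\pmod k$, the opening equivalence forces $k\mid m$, which is the first alternative of \eqref{conditions}.

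For part $(b)$ I would assume $k$ prime and that \eqref{conditions} holds, and aim to prove $k\mid\frac{q-1}{p-1}$. If $p\equiv1\pmod k$ and $k\mid m$, the opening equivalence already gives this, with no use of primality. The substantive case is $p\not\equiv1\pmod k$: here $\gcd(p,k)=1$ (any common prime divisor would divide both $p^{m}$ and $p^{m}-1=q-1$), so $d:=\mathrm{ord}_{k}(p)$ is well defined, $d\ge2$, and $d\mid m$ because $k\mid p^{m}-1$. From $p^{d}-1=(p-1)\bigl(1+p+\cdots+p^{d-1}\bigr)$ together with $k\mid p^{d}-1$ and $k\nmid p-1$, primality of $k$ yields $k\mid 1+p+\cdots+p^{d-1}=\frac{p^{d}-1}{p-1}$. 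Since $d\mid m$ we have $\frac{p^{d}-1}{p-1}\mid\frac{p^{m}-1}{p-1}$, whence $k\mid\frac{q-1}{p-1}$, and Theorem~\ref{Teo: GP enteros} concludes. The final ``in particular'' clause is simply $(a)$ and $(b)$ combined.

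The only non-formal point, and the place where the hypothesis that $k$ is prime is genuinely needed, is the implication $k\mid(p-1)\cdot\frac{p^{d}-1}{p-1}\ \Rightarrow\ k\mid\frac{p^{d}-1}{p-1}$ in case $(b)$. For composite $k$ this fails: e.g.\ $k=8$, $p=3$ gives $d=2$ and $\frac{p^{d}-1}{p-1}=4$, and indeed $\G(8,3^{2})$ is a directed graph, hence non-integral by Theorem~\ref{Teo: GP enteros}, even though \eqref{conditions} holds. Everything else is elementary bookkeeping with $1+p+\cdots+p^{m-1}$ together with a single appeal to Theorem~\ref{Teo: GP enteros}.
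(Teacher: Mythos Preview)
Your proof is correct and follows essentially the same route as the paper: both reduce everything to Theorem~\ref{Teo: GP enteros} and read $\frac{q-1}{p-1}=1+p+\cdots+p^{m-1}$ modulo $k$. The only difference is cosmetic: in part~$(b)$ the paper applies primality directly to the factorization $q-1=(p-1)\cdot\frac{q-1}{p-1}$ (since $k$ is prime, $k\mid q-1$, $k\nmid p-1$ give $k\mid\frac{q-1}{p-1}$ at once), whereas you detour through the order $d=\mathrm{ord}_k(p)$ and the divisibility $\frac{p^d-1}{p-1}\mid\frac{p^m-1}{p-1}$ --- unnecessary but harmless, and your added counterexample $k=8$, $p=3$ is a nice touch not present in the paper.
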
 

\begin{proof}
By Theorem \ref{Teo: GP enteros}, $\mathrm{Spec}(\G(k,q)) \subset \Z$ if and only if $k\mid\frac{q-1}{p-1}$.
Notice that $k\mid \frac{q-1}{p-1}$ if and only $k\mid \Psi_{m}(p)$, where 
	$\Psi_{m}(p) = p^{m-1} + \cdots + p^2+p+1$
Thus, if $p\equiv 1 \pmod{k}$ we have that $\Psi_{m}(p)\equiv m \pmod{k}$. That is, $k\mid \Psi_{m}(p)$ if and only if $k\mid m$.

\noindent ($a$) 
Suppose that $\G(k,q)$ is integral. There are two possibilities: $k\mid p-1$ or not.
In the first case, we known that $k\mid m$, as we wanted to show.

\noindent ($b$) 
By the first part of the statement, it is enough to check the claim for $p\not \equiv 1\pmod{k}$ since otherwise we know that $\G(k,q)$ is integral. 
Thus, assume that $p\not \equiv 1\pmod{k}$. Since $k$ is prime by hypothesis with $k\nmid p-1$ and $k\mid q-1$, then $k\mid\frac{q-1}{p-1}$. Therefore, $\G(k,q)$ is integral as desired.
The remaining statement is straightforward.
\end{proof}

\begin{exam}
The graphs $\G(5,p^{5t})$ with $t \in \N$ and $p$ prime of the form $5\ell+1$ (studied in Proposition \ref{G5}) have integral spectrum (notice that this is not clear at all from the expressions of the Gaussian periods in Proposition \ref{G5}). 
\hfill $\lozenge$	
\end{exam}

We close the section with a result on divisibility of the energy.
\begin{coro}
	If $\G(k,q)$ is an integral GP-graph then the degree of regularity $n=\frac{q-1}k$ divides the energy of $\G(k,q)$ and of $\G^+(k,q)$, that is 
	$n\mid E(\G^*(k,q))$. 
\end{coro}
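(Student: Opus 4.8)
The plan is to compute the energy $E(\G^{*}(k,q))=\sum_{\lambda}|\lambda|$ (sum over all eigenvalues counted with multiplicity) directly from Theorem \ref{Spectro Gkq}, using that integrality makes every Gaussian period an ordinary integer. First I would record two consequences of the hypothesis. By Theorem \ref{Teo: GP enteros}, the graph $\G(k,q)$ being integral means $k\mid\frac{q-1}{p-1}$ and $\eta_i^{(k,q)}\in\Z$ for $0\le i\le k-1$; moreover, writing $\frac{q-1}{p-1}=kt$ gives $n=t(p-1)$, so $n$ is even whenever $q$ (equivalently $p$) is odd --- which is exactly the situation in which \eqref{spec Gkq+} provides $Spec(\G^{+}(k,q))$. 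Also $n=\frac{q-1}{k}\ge 1$, so $|n|=n$.

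Next, from \eqref{spec Gkq} --- equivalently, from the computation in the proof of Theorem \ref{Spectro Gkq} showing that the eigenvalues of $\G(k,q)$ are $n$ with multiplicity $1$ together with each $\eta_i^{(k,q)}$ with multiplicity $n$ --- I would obtain
\[
E(\G(k,q)) = n+\sum_{i=0}^{k-1} n\,|\eta_i^{(k,q)}| = n\Big(1+\sum_{i=0}^{k-1}|\eta_i^{(k,q)}|\Big),
\]
and the factor in parentheses is a nonnegative integer because each $\eta_i^{(k,q)}\in\Z$; hence $n\mid E(\G(k,q))$. For $\G^{+}(k,q)$ there is nothing new: if $q$ is even then $\G^{+}(k,q)=\G(k,q)$ and we are done, while if $q$ is odd then $n$ is even by the first step, so \eqref{spec Gkq+} applies and each period $\eta_i^{(k,q)}$ splits into the pair $\pm\eta_i^{(k,q)}$ with multiplicities halved, contributing the same amount $n\,|\eta_i^{(k,q)}|$ to the energy; therefore
\[
E(\G^{+}(k,q)) = n\Big(1+\sum_{i=0}^{k-1}|\eta_i^{(k,q)}|\Big) = E(\G(k,q)),
\]
so $n\mid E(\G^{+}(k,q))$ as well, which together with the previous case gives $n\mid E(\G^{*}(k,q))$.

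I do not anticipate a real obstacle here: once the spectrum is in hand the argument is essentially a one-line computation, and the only step that deserves a moment's care is the reduction to \eqref{spec Gkq+} in the odd-$q$ case, i.e.\ the observation (drawn from Theorem \ref{Teo: GP enteros}) that integrality of $\G(k,q)$ forces $n$ to be even; without this one would have to treat $\G^{+}(k,q)$ with loops separately, but such graphs lie outside the scope of Theorem \ref{Spectro Gkq}.
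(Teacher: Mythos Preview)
Your proof is correct and follows essentially the same approach as the paper: compute the energy directly from the spectrum formulas \eqref{spec Gkq} and \eqref{spec Gkq+} and observe that $n$ factors out once the Gaussian periods are integers. Your treatment is in fact slightly more careful than the paper's, since you explicitly verify that integrality forces $n$ to be even when $q$ is odd (so that \eqref{spec Gkq+} actually applies), and you note the pleasant by-product $E(\G^{+}(k,q))=E(\G(k,q))$.
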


\begin{proof}
By \eqref{spec Gkq} and \eqref{spec Gkq+}, the energy of $\G^*(k,q)$ is given by
	$$E(\G^*(k,q))=n(1+\mu n + \sum_{i=1}^s \mu_i |\eta_i|)$$
where $\mu, \mu_i \in  \N_0$ and $\eta_i \in \Z$ 
for $i=1,\ldots,s$ by hypothesis, and this implies the result.	
\end{proof}

We point out that for regular graphs which are not GP-graphs, the result does not hold in general. For instance, the cubic graph $C_6^*$ which is the $6$-cycle with loops, has spectrum $\{[3]^1, [2]^2, [0]^2, [-1]^1\}$ and hence energy $E(C_6^*)=8$ and $3\nmid 8$. For an example without loops, consider the cubic graph of six vertices numbered $\G_{51}$ in \cite{CvP}. This graph has spectrum $\{[3]^1, [1]^1, [0]^2, [-2]^2\}$ and hence energy $E(\G_{51})=8$ and $3\nmid 8$ (see Table 1 in \cite{PV8}).

\section{Semiprimitive generalized Paley graphs} \label{sec:5}
In this and the next section we focus on a particular family of GP-graphs, the semiprimitive ones. 
Let $\G(k,q)$ with $q=p^m$ and $k\mid q-1$.

In the study of 2-weight irreducible cyclic codes, the \textit{semiprimitive case} corresponds to 
$-1$ being a power of a prime $p$ modulo $k$ (see \cite{SW}). 
If $t \in \N$ is minimal such that 
	$$p^{t}\equiv -1 \pmod{k},$$ 
then $\mathrm{ord}_{k}(p) = 2t$ when $k>2$ and so, since $q=p^m\equiv 1 \pmod{k}$, 
we obtain that 
	$$m=2ts$$ 
for some positive integer $s$ when $k>2$.
Then, we have that semiprimitiveness is equivalent to either $k=2$ and $q$ odd or else $k>2$ and  
\begin{equation} \label{semip cond}
	k \mid p^t+1 \quad \text{ for some } \quad t \mid \tfrac m2.
\end{equation}

With respect to the GP-graphs in the semiprimitive case, notice that if $k=2$ with $q$ odd, $\G(2,q)$ is non-directed if $q\equiv 3 \pmod{4}$ and directed if $q\equiv 1 \pmod{4}$. On the other hand, if $k>2$ then the graph 
$\G(k,q)$ is always undirected, since by assumptions, if $m=2s$ then $k\mid \tfrac{q-1}2$ since 
	$$\tfrac12(q-1) = \tfrac 12 (p^s-1)(p^s+1).$$
Furthermore, if $k=p^{\frac m2}+1$ then $\G(k,q)$ is not connected (see Proposition~4.6 in \cite{PV1}). Indeed, 
one can prove that
$\G(p^{\frac{m}{2}+1},p^{m}) \cong K_{p^{\frac{m}{2}}} \cup \cdots \cup K_{p^{\frac m2}}$ 
($p^{\frac m2}$-times).

\begin{defi} \label{def semip}
We say that $(k, q)$ with $k=2$ and $q\equiv 1 \pmod 4$ or $k>2$ satisfying \eqref{semip cond} and $k \ne p^{\frac m2}+1$ is a \textit{semiprimitive pair} of integers. 
If $(k,q)$ is a semiprimitive pair of integers, we will refer to $\G(k,q)$ as a \textit{semiprimitive GP-graph}.
Hence, every semiprimitive GP-graph $\G(k,q)$ is undirected and connected.
\end{defi}

For instance, if $p=3$ and $m=4$, to find the semiprimitive pairs of the form $(k,81)$ we take $k\mid 3^2+1=2\cdot 5$ and $k\mid 3^1+1=4$. Hence $k=2, 4$ or $5$, while $k=10$ is not allowed since $10=3^{\frac m2}+1$.

\begin{rem} \label{semipr}
\noindent ($i$) 
Three infinite families of semiprimitive pairs, for $p$ prime and $m=2t \ge 2$, with $k=2,3,4$ respectively are given by:   
\begin{enumerate}[($a$)] 
	\item the pairs $(2,p^{2t})$ with $p$ odd; \sk 
			
	\item the pairs $(3,p^{2t})$ with $p\equiv 2 \pmod 3$ and $t\ge 1$ (where $t\ge 2$ if $p=2$); \sk 
		
	\item the pairs $(4,p^{2t})$ with $p\equiv 3 \pmod 4$ and $t\ge 1$ (where $t\ge 2$ if $p=3$). 
\end{enumerate}
The first of the three families of pairs give rise to the classical Paley graphs $\G(2,p^{2t})$.
		
\noindent ($ii$) 
Another infinite family of semiprimitive pairs is given by $(p^\ell+1,p^m)$ 
with $p$ prime, $m\ge 2$, $\ell\mid m$ and $\frac{m}{\ell}$ even. 
They give the GP-graphs $\G(q^\ell+1,q^m)$, with $q=p$, considered in \cite{PV1} for $q$ a power of $p$.
Notice that the graphs $\G(3, 2^{2t})$ with $t\ge 2$ and $\G(4,3^{2t})$ with $t\ge 1$ belong to both families given in ($i$) and ($ii$). For instance, $\G(3,16)$, $\G(3,64)$, and $\G(4,81)$ are semiprimitive GP-graphs.	
\end{rem}

Using the previous definition and items ($i$) and ($ii$) in the remark, we give a list of the smallest semiprimitive pairs $(k,q)$ with $q=p^m$ for $p=2,3,5,7$ and $m=2,4,6,8$.  
\begin{table}[h!]
	\caption{Values of $k$ for small semiprimitive pairs $(k,p^m)$.}
	 \label{tabla 1}
	 \begin{tabular}{|c|p{1.25cm}|p{3cm}|p{3.25cm}|p{3.25cm}|} \hline 
		& $m=2$ & $m=4$ & $m=6$ & $m=8$ \\ \hline
		$p=2$ & --    & 3 & 3 & 5 \\ \hline 
		$p=3$ & --    & 2, 4, \textbf{5} & 2, 4, \textbf{7}, \textbf{14} & 2, 4, \textbf{5}, \textbf{10}, \textbf{41} \\ \hline 
		$p=5$ & 2, \textbf{3}  & 2, \textbf{3}, 6, \textbf{13} & 2, \textbf{3}, 6, \textbf{7}, \textbf{9}, \textbf{14}, \textbf{18}, 
		\textbf{21},  \textbf{42}, \textbf{63} & 2, \textbf{3}, 6, \textbf{13}, 26, \textbf{313} \\ \hline
		$p=7$ & 2, \textbf{4}  & 2, \textbf{4}, \textbf{5}, 8, \textbf{10}, \textbf{25} & 2, \textbf{4}, \textbf{5}, 8, 
		\textbf{10}, \textbf{25}, \textbf{43}, 50, \textbf{86}, \textbf{172} & 2, \textbf{4}, \textbf{5}, 8, \textbf{10}, \textbf{25}, 50, 
		\textbf{1201} \\ \hline
	\end{tabular}
\end{table}

Here we have marked in bold those $k$ which are different from 2 and not of the type $p^\ell+1$ for some $p$ and $\ell$, showing that in general there are much more semiprimitive graphs $\G(k,q)$ than Paley graphs $\G(2,q)$ or GP-graphs of the form $\G(p^\ell+1,p^m)$. 

It is well-known that the Gaussian periods associated to a semiprimitive pair $(k,q)$ are integers (see for instance \cite{DY}) and hence $\G(k,q)$ is integral. 
We now use Theorem \ref{Teo: GP enteros} to obtain the same result in an indirect but elementary way (i.e.\@ without explicitly computing the spectrum of $\G(k,q)$). 
\begin{prop} \label{Coro Semip GP integrals}
	Every semiprimitive GP-graph $\G(k,q)$ is integral.
\end{prop}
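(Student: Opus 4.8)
The plan is to reduce the statement to the arithmetic condition supplied by Theorem~\ref{Teo: GP enteros}, namely that $\G(k,q)$ is integral if and only if $k \mid \tfrac{q-1}{p-1}$. So the whole task becomes: \emph{show that a semiprimitive pair $(k,q)$ satisfies $k \mid \tfrac{q-1}{p-1}$}. I would split into the two cases built into Definition~\ref{def semip}.

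First, the easy case $k=2$ with $q \equiv 1 \pmod 4$. Writing $q = p^m$, the condition $q \equiv 1 \pmod 4$ forces either $p \equiv 1 \pmod 4$, or $p \equiv 3 \pmod 4$ with $m$ even. In the first subcase $p-1$ is even so $\tfrac{q-1}{p-1} = 1 + p + \cdots + p^{m-1} \equiv m \pmod 2$ is irrelevant; rather, one checks directly that $p \equiv 1 \pmod 4$ gives $\tfrac{q-1}{p-1} = \sum_{j=0}^{m-1} p^j \equiv m \equiv 0$ is not what we need — instead note simply that when $p\equiv 1\pmod 4$ each $p^j$ is odd, but we only need $2\mid \tfrac{q-1}{p-1}$, and $\tfrac{q-1}{p-1}$ being a sum of $m$ odd terms is even iff $m$ is even; however when $p\equiv 1\pmod 4$ we can instead argue $2\mid p-1$... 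This is exactly the computation already carried out in Example~\ref{spec G2q}(ii), so I would just cite it. The cleaner uniform route for $k=2$: $q\equiv 1\pmod 4$ is by \eqref{semip cond} with $t=1$, i.e. $2\mid p+1$, which is automatic for $p$ odd, and then the divisibility $2\mid \tfrac{q-1}{p-1}$ follows from the general argument below, so I will fold it into that argument.

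For the main case $k > 2$ satisfying \eqref{semip cond}, there is $t \mid \tfrac m2$ with $k \mid p^t + 1$. Write $m = 2ts$ for a positive integer $s$ (as noted in the text, $\mathrm{ord}_k(p) = 2t$ forces this). The key identity is
\begin{equation*}
  \frac{q-1}{p-1} = \frac{p^{2ts}-1}{p-1} = \frac{p^{2ts}-1}{p^{2t}-1}\cdot\frac{p^{2t}-1}{p-1} = \Big(\sum_{j=0}^{s-1} p^{2tj}\Big)\cdot\frac{p^{2t}-1}{p-1},
\end{equation*}
and then $\frac{p^{2t}-1}{p-1} = \frac{(p^t-1)(p^t+1)}{p-1} = (p^t+1)\cdot\frac{p^t-1}{p-1} = (p^t+1)\big(\sum_{j=0}^{t-1}p^j\big)$. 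Since $k \mid p^t+1$ and $p^t+1$ divides $\tfrac{q-1}{p-1}$ (it appears as a factor on the right-hand side, the remaining factors being integers), we conclude $k \mid \tfrac{q-1}{p-1}$. Then Theorem~\ref{Teo: GP enteros} gives integrality. The case $k = 2$, $q \equiv 1 \pmod 4$ fits the same scheme with $t = 1$ provided $q = p^m$ has $m$ even when $p \equiv 3 \pmod 4$ (so that $t = 1 \mid \tfrac m2$); when $p \equiv 1 \pmod 4$ one has $2 \mid p - 1$, hence $2 \mid \tfrac{q-1}{p-1}\cdot(p-1) = q-1$ is not enough, so there one argues directly that $k=2 \mid p-1 \mid \tfrac{q-1}{p-1}\cdot 1$... cleaner: $2 \mid p-1$ implies $2 \mid p^j(p-1)$ summing telescopically is circular, so I will simply invoke Example~\ref{spec G2q}(ii) for the $p\equiv 1\pmod 4$ subcase and the displayed identity for the rest.

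I do not expect any serious obstacle here: this proposition is a corollary, and the only real content is the factorization $\tfrac{p^{2ts}-1}{p-1} = \big(\sum_{j=0}^{s-1}p^{2tj}\big)\big(\sum_{j=0}^{t-1}p^j\big)(p^t+1)$ exhibiting $p^t+1$ (hence $k$) as a divisor. The mild bookkeeping nuisance is making sure the edge definition of a semiprimitive pair (the clause $k\ne p^{m/2}+1$, and the $k=2$ convention) is consistently handled, but none of that affects the divisibility. The hard part, such as it is, is purely presentational: deciding whether to treat $k=2$ separately or to shoehorn it into the general identity; I would treat it separately with a one-line reference to Example~\ref{spec G2q}(ii) to keep the argument crisp.
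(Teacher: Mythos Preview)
Your proof is correct and follows essentially the same route as the paper: reduce via Theorem~\ref{Teo: GP enteros} to $k\mid\tfrac{q-1}{p-1}$, then use $m=2ts$ to factor. The only difference is cosmetic: the paper writes $q-1=(p^t-1)\Psi_{2s}(p^t)$ with $\Psi_{2s}(x)=1+x+\cdots+x^{2s-1}$, notes that $k\mid\Psi_{2s}(p^t)$ (since $p^t\equiv-1\pmod k$ and $2s$ is even) together with $(p-1)\mid(p^t-1)$, whereas you exhibit $p^t+1$ directly as a factor of $\tfrac{q-1}{p-1}$ --- these are equivalent, since $\Psi_{2s}(p^t)=(p^t+1)\sum_{j=0}^{s-1}p^{2tj}$.
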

	
\begin{proof}
By Theorem \ref{Teo: GP enteros}, $\G(k,q)$ is integral if and only if $k\mid\frac{q-1}{p-1}$, where $q=p^m$ for some $m$.
Thus, we will show that if $(k,q)$ is a semiprimitive pair then $k\mid\frac{q-1}{p-1}$. 
If $t$ is the minimal positive integer such that $p^{t}\equiv -1 \pmod{k}$, then $2t = \mathrm{ord}_{k}(p)$ and so, since 
$q=p^m\equiv 1 \pmod{k}$, we obtain that $m=2ts$ for some positive integer $s$.

Notice that we have the factorization 
	$$q-1= p^{2ts}-1=(p^t-1)\Psi_{2s}(p^t)$$ 
where $\Psi_{2s}(x) = x^{2s-1} + \cdots +x^2+x+1$.
Since $2s$ is even and $p^t \equiv -1\pmod{k}$ we obtain that $k\mid \Psi_{2s}(p^t)$. 
On the other hand, $p-1\mid p^t-1$ trivially, and hence we have that $k\mid \frac{q-1}{p-1}$. 
		
We have shown that $k\mid \frac{q-1}{p-1}$ for every semiprimitive pair $(k,q)$ and 
therefore $\G(k,q)$ is integral, as we wanted to see. 
\end{proof}

\subsection{The spectrum of semiprimitive GP-graphs $\G(k,q)$} \label{sec:6.1}
In this subsection we recall the spectrum for arbitrary semiprimitive GP-graphs. 
In 1999, by using Gauss sums, Brouwer, Wilson and Xiang computed the spectra of a more general family defined in terms of semiprimitive pairs (see Theorem 2 in \cite{BWX}). Now, for completeness, using Gaussian periods 
we give the spectrum of the corresponding GP$^*$-graphs $\G(k,q)$ and $\G^+(k,q)$ and of the complements $\bar \G(k,q)$.

We will need the following notation.
If $q=p^m$, define the sign 
	\begin{equation} \label{signo} 
		\sigma = (-1)^{s+1}
	\end{equation}
where $s=\frac{m}{2t}$ and $t$ is the least integer $j$ such that $k \mid p^j+1$ (hence $s \ge 1$).

\begin{thm} \label{semiprimitive}
Let $(k, q)$ be a semiprimitive pair with $q=p^m$, $m$ even, and put $n=\frac{q-1}k$.  
Then, the spectra of $\G=\G(k,q)$, $\G^+=\G^+(k,q)$ and $\bar \G = \bar \G(k,q)$ are integral and respectively given by 	\begin{equation*}
		\begin{aligned}
			& \mathrm{Spec}(\G) 		 = \{ [n]^1, [\lambda_1]^n, [\lambda_2]^{(k-1)n}\}, \\[1.5mm] 
			& \mathrm{Spec}(\bar \G)  = \{[(k-1)n]^1, [(k-1) \lambda_2]^n, [-1-\lambda_2]^{(k-1)n}\},
		\end{aligned}
	\end{equation*}
where 
	\begin{equation} \label{autoval semip}
		\lambda_1 = \frac{\sigma (k-1) p^{\frac m2}-1}{k}
		\qquad  \text{and} \qquad  
		\lambda_2 = -\frac{\sigma p^{\frac m2}+1}{k} 
	\end{equation}
with $\sigma$ as given in \eqref{signo}. 
Furthermore, we have $\mathrm{Spec}(\G^+) = \mathrm{Spec}(\G)$ if $q$ is even and 
$\mathrm{Spec}(\G^+) = \{ [n]^1, [\pm \lambda_1]^{\frac n2}, [\pm \lambda_2]^{\frac{(k-1)n}2}\}$ if $q$ is odd. 
\end{thm}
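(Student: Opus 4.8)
The plan is to reduce everything to Theorem~\ref{Spectro Gkq} together with the known values of the Gaussian periods in the semiprimitive case. First I would recall (or cite \cite{DY}, \cite{BWX}) that for a semiprimitive pair $(k,q)$ with $q=p^m$, $m=2ts$, the Gaussian periods $\eta_i^{(k,q)}$ take exactly two values: one of them, say $\eta_0^{(k,q)}$, equals $\lambda_1 = \frac{\sigma(k-1)p^{m/2}-1}{k}$ and occurs once (i.e.\@ for a single coset), while the other equals $\lambda_2 = -\frac{\sigma p^{m/2}+1}{k}$ and occurs for the remaining $k-1$ cosets, where $\sigma=(-1)^{s+1}$. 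This can be checked directly from the formula for Gauss sums in the semiprimitive case, or extracted from \eqref{sum etas=1}: since $\eta_0 + (k-1)\eta_1 = -1$, once one value is known the other is forced, so it suffices to compute a single Gauss sum $g(\chi)$ for a character $\chi$ of order $k$, which in the semiprimitive case is $g(\chi) = \sigma\sqrt{q} = \sigma p^{m/2}$ up to the standard normalization. With these two values in hand, the quantities $\mu$ and $\mu_{i_j}$ of \eqref{numbers} are immediate: since neither $\lambda_1$ nor $\lambda_2$ equals $n$ (which one verifies by a size estimate, using $k\ne p^{m/2}+1$ so that the graph is connected, cf.\@ Definition~\ref{def semip}), we have $\mu=0$, $s=2$, and the two multiplicities are $\mu_{i_1}n = n$ and $\mu_{i_2}n=(k-1)n$.

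Having established this, the spectrum of $\G=\G(k,q)$ follows by plugging into \eqref{spec Gkq}, giving $Spec(\G)=\{[n]^1,[\lambda_1]^n,[\lambda_2]^{(k-1)n}\}$. For the complement, I would invoke directly the formula in Theorem~\ref{Spectro Gkq}: the non-principal eigenvalues of $\bar\G$ are $-1-\eta_{i_j}$. Here $-1-\lambda_1 = -1 - \frac{\sigma(k-1)p^{m/2}-1}{k} = \frac{-\sigma(k-1)p^{m/2}-(k-1)}{k} = (k-1)\cdot\frac{-\sigma p^{m/2}-1}{k} = (k-1)\lambda_2$, and $-1-\lambda_2 = -1+\frac{\sigma p^{m/2}+1}{k} = \frac{\sigma p^{m/2}-(k-1)}{k}$; one checks this last expression is indeed the $-1-\lambda_2$ appearing in the statement (it is just left unsimplified there). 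The multiplicities carry over unchanged by Theorem~\ref{Spectro Gkq}: $1$, $n$, $(k-1)n$ respectively, with the principal eigenvalue of $\bar\G$ being $q-1-n = (k-1)n$. This gives the stated $Spec(\bar\G)$.

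For the sum graph $\G^+=\G^+(k,q)$, when $q$ is even we already noted $\G^+(k,q)=\G(k,q)$, so $Spec(\G^+)=Spec(\G)$ trivially. When $q$ is odd, $n = \frac{q-1}{k}$ is even (since in the semiprimitive case $q=p^{2ts}$, so $q\equiv 1 \pmod{p^t+1}$ and in fact $q-1=(p^{ts}-1)(p^{ts}+1)$ is divisible by $2k$ when $k\mid p^t+1$ and $q$ is odd — this parity check is the one slightly delicate arithmetic point and I would spell it out), so the hypothesis ``$q$ odd and $n$ even'' of Theorem~\ref{Spectro Gkq} is met and \eqref{spec Gkq+} applies directly, yielding $Spec(\G^+) = \{[n]^1,[\pm\lambda_1]^{n/2},[\pm\lambda_2]^{(k-1)n/2}\}$. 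Finally, integrality of all three spectra is immediate: either by citing Proposition~\ref{Coro Semip GP integrals} (semiprimitive GP-graphs satisfy $k\mid\frac{q-1}{p-1}$, hence are integral by Theorem~\ref{Teo: GP enteros}, and $\bar\G$, $\G^+$ are integral iff $\G$ is by Remark~\ref{Spec remarks}(ii)), or simply by observing that $\lambda_1,\lambda_2\in\Z$ since $\sigma(k-1)p^{m/2}-1\equiv -1-(k-1)\cdot(-1)\equiv 0$ and $\sigma p^{m/2}+1 \equiv (-1)+1 \equiv 0 \pmod k$ using $p^{m/2}=p^{ts}\equiv(-1)^s=-\sigma\pmod k$.

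The main obstacle I anticipate is not conceptual but bookkeeping: one must correctly identify which of the two Gaussian period values has multiplicity $n$ versus $(k-1)n$ (equivalently, pin down the sign conventions in the semiprimitive Gauss sum evaluation so that $\sigma$ comes out as $(-1)^{s+1}$ and not $(-1)^s$), and one must verify the parity claim ``$n$ even'' in the odd-$q$ case so that \eqref{spec Gkq+} is legitimately applicable. Both are routine but require care; everything else is a direct substitution into Theorems~\ref{Spectro Gkq} and~\ref{Teo: GP enteros}.
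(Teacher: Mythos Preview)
Your proposal is correct and follows essentially the same approach as the paper: both reduce to Theorem~\ref{Spectro Gkq} and invoke the known semiprimitive Gaussian periods from \cite{DY} to identify the two values $\lambda_1,\lambda_2$ and their multiplicities, then read off the spectra of $\bar\G$ and $\G^+$ from the same theorem. Your write-up is in fact slightly more thorough than the paper's in two places: you explicitly verify the identity $-1-\lambda_1=(k-1)\lambda_2$ that explains the form of $Spec(\bar\G)$, and you flag the parity check ``$n$ even when $q$ odd'' needed to apply \eqref{spec Gkq+}, which the paper uses without comment.
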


\begin{proof}
We first compute the spectrum of $\G = \G(k,q)$, which by Theorem \ref{Spectro Gkq} is given in terms of Gaussian periods. 
From Lemma 13 in \cite{DY} the Gaussian periods $\eta_j^{(k,q)}$, for $j=0, \ldots, k-1$, are given by:
		
\noindent ($a$) 
If $p$, $\alpha=\frac{p^t+1}k$ and $s$ are all odd then 
	\begin{equation*} \label{gp1} 
		\eta_j^{(k,q)} = \begin{cases}
		\frac{(k-1)\sqrt q -1}k & \qquad \text{if } j=\frac k2, \\[1mm]
		-\frac{\sqrt q +1}k & \qquad \text{if } j \ne \frac k2.
		\end{cases}
	\end{equation*}
		
\noindent ($b$) 
In any other case we have $\sigma=(-1)^{s+1}$ and 
	\begin{equation*} \label{gp2} 
		\eta_j^{(k,q)} = \begin{cases}
		\frac{\sigma (k-1)\sqrt q -1}k & \qquad \text{if } j=0, \\[1mm]
		-\frac{\sigma \sqrt q +1}k & \qquad \text{if } j \ne 0.
	\end{cases}
	\end{equation*}

Thus, by Theorem \ref{Spectro Gkq}, the spectrum of $\G(k,q)$ is 
$\mathrm{Spec}(\G(k,q)) = \{ [n]^1, [\eta_{k/2}]^n, [\eta_0]^{(k-1)n}\}$ if 
$p$, $\alpha$, $s$ are odd or $\mathrm{Spec}(\G(k,q)) = \{ [n]^1, [\eta_0]^n, [\eta_1]^{(k-1)n}\}$ otherwise.
		
Suppose we are in case ($a$), i.e.\@ $p$, $\alpha$ and $s$ are odd. 
Then we have 
	$$\lambda_1 = \eta_{k/2} =  \tfrac{(k-1)p^{\frac m2}+1}k 
		\qquad  \text{and} \qquad 
	\lambda_2 = \eta_j = \eta_0 = -\tfrac{p^{\frac m2}+1}{k} \quad (j\ne \tfrac k2).$$
It is clear that $\lambda_2\ne n$ and $\lambda_2\ne \lambda_1$. Also, $n \ne \lambda_1$ since 
$k \ne p^{\frac m2}+1$. 
Thus, all three eigenvalues are different and their corresponding multiplicities are as given in the statement. 
		
In case ($b$), we have 
	$$\eta_0 = \tfrac{\sigma(k-1)p^{\frac m2}+1}k \qquad  \text{and} \qquad  
	\eta_j = -\tfrac{\sigma p^{\frac m2}-1}k \quad (j\ne 0).$$ 
Again, one checks that $\eta_0 \ne \eta_j$, $\eta_0\ne n$ and $\eta_j \ne n$ for every $j\ne \frac k2$.  
Thus, the corresponding multiplicities are as stated in the proposition. 
		
Combining cases ($a$) and ($b$) we get \eqref{autoval semip}. 
Finally, the spectra of $\bar \G(k,q)$ and $\G^+(k,q)$ follow by Theorem \ref{Spectro Gkq}.
Just recall that for $q$ even  we have that $\G^+=\G$.
\end{proof}
	
Notice that for $(k,q)$ semiprimitive with $q$ odd, $\G^+(k,q)$ has almost symmetric spectrum (see Definition 2.13 in \cite{PV5}) with five different eigenvalues.

\noindent
\textit{Note.} Since $\lambda_1, \lambda_2 \in \Z$, we have that 
$\sigma=\pm 1$ if and only if $k \mid p^{\frac m2} \pm 1$, respectively.

\begin{rem}
The weight distribution of $2$-weight irreducible cyclic codes $\CC(k,q)$ in the semiprimitive case is known (see for instance \cite{SW}). Using this and the relation of the weight distribution of $\CC(k,q)$ with the spectrum of GP-graphs obtained in 
Theorem~5.4 in \cite{PV7} one can also recover the spectrum of semiprimitive GP-graphs $\G(k,q)$ 
as in \eqref{spec Gkq} in Theorem \ref{Spectro Gkq}.	 
\end{rem}
	
\begin{rem}
We have computed the spectrum of the GP-graphs $\G_{q,m}(\ell)=\G(q^\ell+1,q^m)$ and $\bar \G_{q,m}(\ell)$, 
with $\ell \mid m$ and $\frac{m}{\ell}$ even (see Theorem~3.5 and Proposition~4.3 in \cite{PV1}, see also \cite{PV2}), 
by using certain sums associated with the quadratic forms 
	$$Q_{\gamma,\ell}(x) = \Tr_{p^m/p} (\gamma x^{q^\ell+1})$$ 
with $\gamma \in \ff_{p^m}^*$.
By ($ii$) in Example \ref{semipr}, the graph $\G(p^\ell+1,p^m)$, i.e.\@ with $q=p$ prime, 
is semiprimitive and hence its spectrum is given by Theorem \ref{Spectro Gkq}. Indeed, 
$\mathrm{Spec}(\G(p^\ell+1,p^m))=\{ [n]^1, [\lambda_1]^{n}, [\lambda_2]^{p^\ell n} \}$ 
where 
\begin{equation*} \label{spec pl+1} 
	n = \tfrac{p^m-1}{p^\ell+1}, \qquad \quad \lambda_1 = \tfrac{\sigma p^{\frac m2 + \ell}-1}{p^\ell+1}, \qquad \quad 
	\lambda_2 = -\tfrac{\sigma p^{\frac m2}+1}{p^\ell+1},
\end{equation*}
with $\sigma = (-1)^{\frac{m}{2\ell}+1}$. 
It is reassuring that both computations of the spectrum coincide after using these two different methods. The same happens for the complementary graphs. 
\end{rem}

We conclude the section showing that for each odd prime power $p^{2m}$ there is only one semiprimitive GP-graph $\G(k_m,p^{2m})$ which is Hamming (or equivalently, there is only one Hamming GP-graph which is semiprimitive). 
	
\begin{prop}
Let $\G(k,q)$ be a semiprimitive GP-graph with $q=p^t$ and $p$ an odd prime. Then, $\G(k,q)$ is Hamming if and only if $k=\frac{p^m+1}{2}$ and $t=2m$. In this case we have
	$$ \G(\tfrac{p^m+1}2,p^{2m}) = H(2,p^m)= K_{p^m} \square K_{p^m} = L_{q,q}$$ 
where $L_{q,q}$ is the $q\times q$ lattice (or rook's) graph, with integral spectrum given by
	$$\textrm{Spec}(\G(\tfrac{p^m+1}2,p^{2m})) = \{[2(p^m-1)]^1, [p^m-2]^{2(p^m-1)}, [-2]^{(p^m-1)^2}\}.$$
\end{prop}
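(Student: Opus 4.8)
The plan is to combine the classification of Hamming GP-graphs due to Lim and Praeger (recalled in Example~\ref{Ex Hamming}) with the combinatorial characterization of semiprimitive pairs (Definition~\ref{def semip}) and the excluded case $k=p^{m/2}+1$. First I would recall that, by \cite{LP}, a GP-graph $\G(k,q)$ with $q=p^t$ is Hamming if and only if there is an integer $b\ge 2$ with $b\mid \tfrac{p^{bm}-1}{p^m-1}$ such that $q=p^{bm}$ and $k=\tfrac{p^{bm}-1}{b(p^m-1)}$, in which case $\G(k,q)=H(b,p^m)$. So the content of the proposition is: among these Hamming parameters, exactly the ones with $b=2$ (and $p$ odd) are semiprimitive, and for $b=2$ the Hamming parameter $k=\tfrac{p^{2m}-1}{2(p^m-1)}=\tfrac{p^m+1}2$ with $q=p^{2m}$ is automatically a semiprimitive pair.

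For the ``if'' direction I would take $k=\tfrac{p^m+1}2$ and $q=p^{2m}$, and verify the conditions of Definition~\ref{def semip}. Since $k\mid p^m+1$, with $t=m$ and $\tfrac{(\text{exponent of }q)}{2}=m$, condition~\eqref{semip cond} holds with $t=m\mid m$; and $k=\tfrac{p^m+1}2\ne p^m+1$ for $p$ odd (so $p^m+1\ge 4$, hence $k<p^m+1$), so the excluded case does not occur. Thus $(k,q)$ is a semiprimitive pair, and by \cite{LP} the graph equals $H(2,p^m)=K_{p^m}\square K_{p^m}$, which is the lattice/rook's graph $L_{q,q}$ with $q=p^m$. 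For the ``only if'' direction, suppose $\G(k,q)$ is both semiprimitive and Hamming; write $q=p^{bm}$, $k=\tfrac{p^{bm}-1}{b(p^m-1)}$ as above with $b\ge 2$. I would show $b=2$: if $b\ge 3$, then the primitive-divisor/connectedness analysis does not fail, but the semiprimitivity constraint forces a contradiction. Concretely, semiprimitivity requires $\mathrm{ord}_k(p)=2t$ with $2t\mid bm$ and $k\mid p^t+1$; since $k=\tfrac{p^{bm}-1}{b(p^m-1)}=\tfrac 1b\,\Psi_b(p^m)$ where $\Psi_b(x)=x^{b-1}+\cdots+x+1$, one has $k\equiv \tfrac{p^{m(b-1)}\cdots}{b}$ and, more usefully, $p^m$ has order $b$ modulo (a cofactor related to) $k$, which is incompatible with $k\mid p^t+1$ unless $b=2$ — this is where the main work lies. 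Once $b=2$ is established, $k=\tfrac{p^m+1}2$, $q=p^{2m}$, $t=2m$, and $p$ must be odd (for $p=2$, $k=\tfrac{2^m+1}2\notin\Z$, and moreover Remark~\ref{semipr} and the statement restrict to odd $p$), giving the stated conclusion.

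The main obstacle I anticipate is the clean exclusion of $b\ge 3$ in the ``only if'' direction: one must rule out that $k=\tfrac 1b\Psi_b(p^m)$ could divide $p^t+1$ for some $t\mid bm/2$ when $b\ge 3$. I would handle this by a cyclotomic/order argument: $k$ (or a suitable divisor of $k$ supported away from small primes) has $p$ of multiplicative order exactly $bm$ — because $\Psi_b(p^m)$ carries the primitive $bm$-th cyclotomic content of $p^{bm}-1$ once the connectedness hypothesis (primitive divisor) is in force — and $k\mid p^t+1$ with $t<bm$ would force $\mathrm{ord}_k(p)\mid 2t<2bm$ yet also $=2t$, pinning down $bm=2t$, i.e. $t=bm/2$; then $k\mid p^{bm/2}+1$ means $p^{bm/2}\equiv-1\pmod k$, but $p^{bm}\equiv 1$ and the order being $bm$ forces exactly $b=2$ (for $b\ge 3$ one checks $p^{bm/2}\not\equiv -1\pmod k$ by comparing with $\Psi_b$). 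This is the step I would write out most carefully; the remaining verifications (integrality of $k$, the inequality $k\ne p^{m/2}+1$, and identifying $H(2,p^m)$ with the rook's graph $L_{q,q}$) are routine.
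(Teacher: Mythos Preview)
Your ``if'' direction and the identification $H(2,p^m)=K_{p^m}\square K_{p^m}=L_{p^m,p^m}$ match the paper. The difference is entirely in the ``only if'' direction, and here you are working much harder than necessary.

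The paper's argument is one line of spectral bookkeeping: by Theorem~\ref{semiprimitive} every semiprimitive GP-graph has exactly three distinct eigenvalues, while by Example~\ref{Ex Hamming} the Hamming graph $H(b,p^m)$ has spectrum $\{\ell p^m-b:0\le \ell\le b\}$, i.e.\ $b+1$ distinct eigenvalues. Hence $b+1=3$, so $b=2$, and then $k=\tfrac{p^{2m}-1}{2(p^m-1)}=\tfrac{p^m+1}{2}$ is automatic. No order computations, no cyclotomic content, no Zsygmondy-type primitive prime divisors are needed.

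Your order-theoretic route could in principle be made to work, but as written it has a genuine gap: you assert that $\mathrm{ord}_k(p)=bm$ ``because $\Psi_b(p^m)$ carries the primitive $bm$-th cyclotomic content,'' but $k=\tfrac{1}{b}\Psi_b(p^m)$, not $\Psi_b(p^m)$ itself, and you would need to check that dividing by $b$ does not kill every primitive prime divisor of $\Phi_{bm}(p)$ (and to handle the Zsygmondy exceptions). Even granting $\mathrm{ord}_k(p)=bm$, the final step ``for $b\ge 3$ one checks $p^{bm/2}\not\equiv -1\pmod k$ by comparing with $\Psi_b$'' is left as a slogan. None of this is needed once you observe the eigenvalue count; I would replace your entire ``only if'' paragraph with the spectral argument above.
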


\begin{proof}
In Example \ref{Ex Hamming} we recall that Hamming GP-graphs (classified in \cite{LP}) are of the form
	$$\G(\tfrac{p^{bm}-1}{b(p^m-1)}, p^{bm}) = H(b,p^{m})= \square^b K_{p^m}$$ 
with spectrum 
\begin{equation} \label{eq: spec hamming}
	\{ [ \ell p^{m}-b ]^{\binom{b}{\ell}(p^{m}-1)^{b-\ell}} \}_{\ell=0}^b.
\end{equation}
Since semiprimitive graphs have exactly three eigenvalues, we must necessarily have that $b=2$ and we check that in this case
	$$\G(\tfrac{p^{2m}-1}{2(p^m-1)}, p^{2m}) = \G(\tfrac{p^{m}+1}2, p^{2m})$$
is semiprimitive since $\tfrac{p^{m}+1}2 \mid p^m+1$.
It is well-known that $H(2,q)$ is the lattice graph $L_{q,q}$. The spectrum follows by taking $b=2$ in  
(or one can also use Theorem~\ref{semiprimitive}).
\end{proof}

\subsection{Semiprimitive GP-graphs are strongly regular} \label{sec:6.2}
Let $\G$ be a regular graph that is neither complete nor empty. Then $\G$ is
said to be \textit{strongly regular} with parameters $srg(v, r, e, d)$
if it is $r$-regular with $v$ vertices, every pair of adjacent vertices has $e$ common neighbours,
and every pair of distinct non-adjacent vertices has $d$ common neighbours. These parameters are tied by the relation
\begin{equation}\label{main relation}
	(v-r-1)d=r(r-e-1).
\end{equation}
For instance, for $q\equiv 1 \pmod 4$, the classic Paley graph $P(q)$ is a strongly regular graph with parameters
$srg(q, \tfrac{q-1}2, \tfrac{q-5}4, \tfrac{q-1}4)$.
If $\G$ is strongly regular with parameters
$srg(v, r, e, d)$, 
then its complement $\bar \G$ is also strongly regular with parameters
$srg(v, \bar r, \bar e, \bar d)$, where 
\begin{equation}\label{srg comp}
	\bar r = v-r-1, \qquad \bar e= v-2-2r+d \qquad \text{and} \qquad \bar d= v-2r+e.
\end{equation}	

Let $\G=srg(v,r,e,d)$. If $2r-(v-1)(e-d)\ne 0$ the graph have integral different eigenvalues. On the other hand, if 
	$$2r-(v-1)(e-d)= 0$$ 
the graph is said to be a \textit{conference graph} because of their connection with symmetric conference matrices. A conference graph has parameters 
	$$srg(v,\tfrac{v-1}2, \tfrac{v-5}4, \tfrac{v-1}4).$$ 
Hence, Paley graphs are conference graphs. 

A strongly regular graph $srg(v,n,e,d)$ with different eigenvalues $n,f,g$ is a \textit{pseudo-Latin square graph} if $n=-g(f-g-1)$, where $n>f>0>g$. Equivalently, it is denoted $PL_\delta(w)$ and has parameters 
\begin{equation} \label{PL}
	PL_\delta(w) = srg(w^2, \delta(w-1), \delta^2-3\delta+w, \delta(\delta-1)),
\end{equation}	
where $w=f-g$ and $\delta=-g$. A graph with the parameters as above, changing $\delta$ and $w$ by $-\delta$ and $-w$ is called a \textit{negative Latin square graph}. It is denoted by $NL_\delta(w)$ and has the parameters
\begin{equation} \label{NL}
	NL_\delta(w) = srg(w^2, \delta(w+1), \delta^2+3\delta-w, \delta(\delta+1)).
\end{equation}
See for instance Chapter 8 in \cite{CvL} for the definitions of pseudo-Latin and negative Latin square graphs.

A regular graph is called \textit{distance regular} if for any two vertices $v$ and $w$, the number of vertices at distance $j$ from $v$ and at distance $k$ from $w$ depends only upon $j, k$, and the distance $d(v,w)$ between $v$ and $w$.	
A connected strongly regular graph $\Gamma$, being a distance regular graph of diameter $\delta =2$, have intersection array of the form 
	$$\mathcal{A}(\Gamma) = \{b_0,b_1, b_2; c_0, c_1, c_2\}.$$ 
For every $i=0,1,2$ and every pair of vertices $x,y$ at distance $i$, the intersection numbers are defined by 
	$$b_i = \#\{z\in N(y): d(x,z)=i+1 \} \qquad \text{and} \qquad c_i = \#\{ z\in N(y) : d(x,z)=i-1 \},$$ 
where $N(y)$ denotes the set of neighbours of $y$. Since we trivially have $b_2=c_0=0$, we will simply write $\mathcal{A}(\Gamma) = \{b_0,b_1;  c_1, c_2\}$, as usual.	
See \cite{BrV} for an introduction and general examples about strongly regular graphs.

We now give some structural properties of the graphs $\G(k,q)$ throughout the spectrum.
\begin{thm} \label{srg}
Let $(k, q)$ be a semiprimitive pair with $q=p^m$, $m=2ts$ where $t$ is the least integer satisfying $k\mid p^t+1$ and put 
$n=\frac{q-1}k$. Then we have:
  \begin{enumerate}[$(a)$]
	\item $\G(k,q)$ and $\bar \G(k,q)$ are primitive, non-bipartite, integral, strongly regular graphs with corresponding parameters 
			$srg(q,n,e,d)$ and $srg(q,(k-1)n,e',d')$ given by 
			$$e=d +  (\sigma p^{\frac m2} + 2\lambda_2), \quad d=n + (p^{\frac m2} +\lambda_2) \lambda_2),  
			\quad e'=q-2-2n+d, \quad d'=q-2n + e.$$
			
	\smallskip 
	\item $\G(k,q)$ and $\bar \G(k,q)$ are distance regular graphs of diameter 2 with intersection arrays 
			$$\mathcal{A} = \{n,n-e-1;1,d\} \qquad \text{and} \qquad 
			\bar{\mathcal{A}} = \{(k-1)q, n-d; 1, q-2n+e\}.$$
			
	\smallskip
	\item If $s$ is odd then $\G(k,q)$ and $\bar \G(k,q)$ are pseudo-Latin square graphs with parameters
		\begin{equation*} \label{LSquares} 
			PL_\delta(w) = srg(w^2, \delta(w-1), \delta^2-3\delta+w, \delta(\delta-1)),
		\end{equation*}
		where $w=f-g$, $\delta=-g$ and $f>0>g$ are the non-trivial eigenvalues of $\G(k,q)$ or $\bar \G(k,q)$. 
  \end{enumerate}
\end{thm}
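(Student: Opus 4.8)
The whole theorem is an exercise in squeezing consequences out of the three‑eigenvalue description of $\G(k,q)$ given in Theorem~\ref{semiprimitive}, together with the standard dictionary between strongly regular graphs and their spectra. Throughout I write $\lambda_1,\lambda_2$ for the two non‑principal eigenvalues as in \eqref{autoval semip}, so that $\Spec(\G(k,q))=\{[n]^1,[\lambda_1]^n,[\lambda_2]^{(k-1)n}\}$, and I set $f=\lambda_1$, $g=\lambda_2$ (one checks at once from \eqref{autoval semip} that $\lambda_1>0>\lambda_2$, since $\sigma=\pm1$ and $k\neq p^{m/2}+1$). By Theorem~\ref{Spectro Gkq} the graph is connected (as $\mu=0$), so it is a connected $n$‑regular graph with exactly three distinct eigenvalues and hence strongly regular; the same then holds for $\bar\G(k,q)$ by the complementation rules recalled just before the statement.

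First I would record the arithmetic that recovers $e$ and $d$ from the spectrum. For a connected strongly regular graph $srg(v,r,e,d)$ the non‑principal eigenvalues $f>g$ satisfy the classical identities $f+g=e-d$ and $fg=d-r$; inverting these gives $d=r+fg$ and $e=d+f+g$. Substituting $r=n$, $f=\lambda_1$, $g=\lambda_2$ and simplifying (using $\lambda_1=\sigma(k-1)p^{m/2}/k-1/k$, $\lambda_2=-\sigma p^{m/2}/k-1/k$, so that $\lambda_1-\lambda_2=\sigma p^{m/2}$ and $\lambda_1+\lambda_2=\sigma p^{m/2}+2\lambda_2$) yields exactly $d=n+(p^{m/2}+\lambda_2)\lambda_2$ wait — more precisely $fg=\lambda_1\lambda_2=(\lambda_2+\sigma p^{m/2})\lambda_2$, so $d=n+(\sigma p^{m/2}+\lambda_2)\lambda_2$ and $e=d+\lambda_1+\lambda_2=d+(\sigma p^{m/2}+2\lambda_2)$, matching the stated formulas (the factor $\sigma$ being harmless since $\sigma^2=1$ can be absorbed). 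The complementary parameters $e'=v-2-2r+d$ and $d'=v-2r+e$ with $v=q$, $r=n$ are then literally the complementation formulas quoted above the theorem, giving $(a)$. For primitivity I must check $\G$ is neither complete nor edgeless (clear, $0<n<q-1$), connected (done) and co‑connected, i.e.\ $\bar\G$ connected — but $\bar\G$ is $(k-1)n$‑regular with $(k-1)n$ of multiplicity $1$ by Theorem~\ref{Spectro Gkq}, hence connected; non‑bipartiteness follows because a connected bipartite regular graph has $-n$ as an eigenvalue, whereas $\lambda_2=-( \sigma p^{m/2}+1)/k>-n$ here (equivalently $\G$ has odd girth, e.g.\ it contains a triangle since $e=d+\dots>0$ can be argued, or simply $R_k$ is closed under the relevant additive relation); integrality is Proposition~\ref{Coro Semip GP integrals}.

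For $(b)$: a connected strongly regular graph is distance‑regular of diameter $2$ — diameter $2$ because $\G$ is not complete yet has diameter $\le s\le k$ and in fact $\le t-1=$ (number of non‑principal eigenvalues) $-1=1$... here I should instead invoke the standard fact that $srg$ graphs have diameter exactly $2$ when not complete, using that two non‑adjacent vertices have $d\ge 1$ common neighbours (which holds since $d=n+\lambda_1\lambda_2$ and one checks $d\ge1$). Then the intersection array of an $srg(v,r,e,d)$ is $\{b_0,b_1;c_1,c_2\}=\{r,\ r-e-1;\ 1,\ d\}$ directly from the definitions of $b_i,c_i$ given in the excerpt ($b_0=r$ is regularity, $b_1=r-1-e$ counts neighbours of a neighbour not adjacent to $x$ and not $x$ itself, $c_1=1$, $c_2=d$). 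Plugging in $r=n$ gives $\mathcal A=\{n,n-e-1;1,d\}$; for $\bar\G$, substitute $\bar r=(k-1)n$, $\bar e=e'$, $\bar d=d'$, and simplify $\bar r-1-\bar e=(k-1)n-1-(q-2-2n+d)=n-d$ (using $q=kn+1$) and $\bar d=q-2n+e$, giving $\bar{\mathcal A}=\{(k-1)n,\,n-d;\,1,\,q-2n+e\}$, which is the asserted array (the ``$(k-1)q$'' in the statement being a typo for $(k-1)n$).

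**The main obstacle, and part (c).** Parts $(a)$–$(b)$ are bookkeeping; the substantive step is $(c)$, the pseudo‑Latin‑square identification when $s$ is odd. A graph is $PL_\delta(w)$ exactly when it is $srg(w^2,\delta(w-1),\delta^2-3\delta+w,\delta(\delta-1))$, and the clean spectral criterion I would use is: an $srg(v,r,e,d)$ is of pseudo‑Latin‑square type iff $v=w^2$ with $w=f-g$ and the negative eigenvalue is $g=-\delta$, i.e.\ iff $v=(f-g)^2$. So the crux is to verify $q=(\lambda_1-\lambda_2)^2$, and indeed $\lambda_1-\lambda_2=\sigma p^{m/2}$, so $(\lambda_1-\lambda_2)^2=p^m=q$ — this works for \emph{any} $s$, which is slightly stronger than ``$s$ odd''; I would double‑check whether the hypothesis ``$s$ odd'' is actually needed (it may be there only to pin down the sign $\sigma$, or to ensure $w=f-g$ comes out as the ``right'' root, or the authors may intend the $\bar\G$ statement where an $s$‑parity issue genuinely enters). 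Having $q=w^2$ with $w=\lambda_1-\lambda_2$, set $\delta=-\lambda_2$ and $w=\lambda_1-\lambda_2$; then I must confirm the three parameter identities $r=\delta(w-1)$, $e=\delta^2-3\delta+w$, $d=\delta(\delta-1)$ hold, which reduces to the two spectral relations $f+g=e-d$, $fg=d-r$ already used in $(a)$ together with $v=(f-g)^2$ — a short algebra check. The same computation applied to $\bar\G$, whose eigenvalues are $(k-1)n$, $-1-\lambda_2$, $-1-\lambda_1$, gives $\bar f-\bar g=\lambda_1-\lambda_2=\sigma p^{m/2}$ again, so $\bar\G$ is likewise pseudo‑Latin‑square of the same order $w=p^{m/2}$ with $\bar\delta=1+\lambda_1$. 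The one place to be careful — and the likely ``hard part'' — is matching signs: deciding which of $\lambda_1,\lambda_2$ plays the role of $f$ versus $g$, hence which is $-\delta$, and confirming the stated parametrization $w=f-g$, $\delta=-g$ is the one that makes all three $PL_\delta(w)$ identities come out right rather than landing in the ``negative Latin square'' $NL$ family; this is exactly where the parity of $s$ (through $\sigma=(-1)^{s+1}$) should be invoked to fix the sign of the dominant term and thereby the sign of $g$.
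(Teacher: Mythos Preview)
Your approach to $(a)$ and $(b)$ is essentially the paper's: it too reads off $d=n+\lambda_1\lambda_2$ and $e=d+\lambda_1+\lambda_2$ from the quadratic governing the non-trivial eigenvalues of a strongly regular graph, and for $(b)$ simply cites an earlier paper for the intersection-array computation you spell out. You also correctly spot the typo $(k-1)q$ for $(k-1)n$ in $\bar{\mathcal A}$. One slip worth flagging: your opening assertion that $\lambda_1>0>\lambda_2$ holds for all $\sigma$ is false --- when $\sigma=-1$ (i.e.\ $s$ even) the signs reverse, giving $\lambda_2>0>\lambda_1$ --- though you effectively retract this when you return to the sign question in $(c)$.

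For $(c)$ the paper takes a slightly different route from yours. Rather than verifying the $PL_\delta(w)$ parameter identities directly, it observes that the regularity degree $n$ coincides with the multiplicity of $\lambda_1$, invokes Proposition~8.14 of Cameron--van Lint to obtain the trichotomy PL/NL/conference, rules out the conference case by checking $2n+(q-1)(e-d)\ne 0$, and then tests $n=\delta(w-1)$ (which holds iff $s$ is odd) to single out PL. Your direct verification is more self-contained, and your observation that $q=(\lambda_1-\lambda_2)^2$ holds for \emph{every} $s$ is correct: the $s$-odd hypothesis is purely what separates PL from NL, exactly as the paper's remark following the proof confirms (for $s$ even one lands in the NL family after reparametrizing $\delta$ by $\min\{|f|,|g|\}$). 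One caution: the ``clean spectral criterion'' $v=(f-g)^2$ you propose does not by itself distinguish PL from NL --- both types satisfy it --- so the degree identity $r=\delta(w-1)$ you plan to check really is the decisive step, and that is precisely where $\sigma=1$ (hence $s$ odd) must be used.
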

	
\begin{proof}
We will use the spectral information from Theorem \ref{semiprimitive}. 
We prove first the results for $\G(k,q)$.
		
\noindent ($a$) 
Since the multiplicity of the degree of regularity $n$ is $1$, the graph is connected. 
Also, one can check that $-n$ is not an eigenvalue of $\G(k,q)$ and hence the graph is non-bipartite.
Now, since $k\mid p^t+1$ then $k\mid p^{ts}+1$ if $s$ is odd and $k\mid p^{ts}-1$ if $s$ is even, 
hence 
$\beta = \frac{p^{\frac m2}+\sigma}k$ is an integer. Thus, since 
$\lambda_1=\sigma(p^{ts}-\beta)$ and $\lambda_2=-\sigma \beta$, by \eqref{autoval semip}, the eigenvalues are all integers (we also know this from Theorem \ref{Spectro Gkq}).
		
Finally, since the graph is connected, $n$-regular with $q$-vertices and has exactly three eigenvalues, it is a strongly regular graph with parameters $srg(q,n,e,d)$. 
We now compute $e$ and $d$. It is known that the non-trivial eigenvalues of an strongly regular graph are of the form 
	$$\lambda^\pm = \tfrac 12 \{ (e-d) \pm \Delta \} \qquad \text{where} \qquad \Delta = \sqrt{(e-d)^2 + 4(n-d)}.$$  
Thus, $d= n +\lambda^ + \lambda^-$ and $e = d + \lambda^+ + \lambda^-$.
From this and \eqref{autoval semip} the result follows.  
		
\noindent ($b$) 
We know that $\Gamma = srg(q,n,e,d)$ is primitive for $(k,q)$ a semiprimitive pair. 
Since $\Gamma$ is connected with diameter $\delta=2$, its intersection array is $\{ n, n-e-1; 1, d \}$. 
In fact, it is clear that $b_0=n$ and $c_1=1$. Let $x,y$ be vertices of $\Gamma$. 
Thus, if $d(x,y)=1$, then 
$$b_1 = \#(N(y) \smallsetminus\{x\}) - \#N(x) =n-1-e.$$ 
If $d(x,y)=2$, then 
$c_2 = \#(N(x) \cap N(y))$. 
Since $\bar \Gamma$ is also connected with diameter $2$, its intersection array is $\{ \bar n, \bar n - \bar e-1; 1, \bar d \}$.
Now, since 
	$$q-2n+e = (q-n+1)-(n-e+1),$$ 
by using \eqref{main relation} and \eqref{srg comp} we get the desired result.
		
\noindent ($c$) 
Note that the regularity degree of $\G=\G(k,q)$ equals the multiplicity of a non-trivial eigenvalue by Theorem \ref{semiprimitive}. Thus, Proposition 8.14 in \cite{CvL}, we have that $\G$ is of pseudo-Latin square type graph (PL), of negative Latin square type (NL) or is a conference graph.
By definition, a conference graph satisfy 
	$$2n+(q-1)(e-d)=0.$$
It is easy to check that this condition holds for $\G(k,q)$ if and only if 
$\G(1,4)=K_4$, and hence $\G$ is not a conference graph. 
Now, put $w=f-g$, where $f,g$ are the non-trivial eigenvalues with $f>0>g$ and 
$\delta=-g$. Then, $\G$ is a pseudo-Latin square graph with parameters as in \eqref{PL}  
or a negative Latin square graph with parameters as in \eqref{NL}.
It is clear that 
	$$n=\delta(w-1)$$ if and only if $s$ is odd and that for $s$ even $n \ne \delta(w+1)$.
For $\bar \G$ one proceeds similarly. Hence the only possibility for $\G$ and $\bar \G$ is to be pseudo-Latin square graphs.

Now, it is easy to see that $\bar \G(k,q)$ is also a primitive non-bipartite integral strongly regular graph 
with parameters and intersection array as stated. 
The proof that $\bar \G(k,q)$ is a pseudo-Latin square if $s$ is odd is analogous to the previous one for $\G(k,q)$ and we omit the details. 
Finally, since $\G(k,q)$ is a pseudo-Latin  square graph $PL_\delta(w)$ then $\bar \G(k,q)$ is a pseudo-Latin square graph $PL_{\delta'}(w)$ with 
$\delta'=u+1-\delta$ (see \cite{CvL}). 
\end{proof}
	
\begin{rem}
Notice that if we take 
	$h = \min\{|f|, |g|\}$, 
then for $s$ even (in the previous notations), $\G(k,q)$ satisfy the same parameters as in \eqref{NL} with $\delta$ replaced by $h$, that is $\G(k,q)$ is a strongly regular graph with parameters, 
in terms of the eigenvalues, given by
	$$\widetilde{NL} = srg(w^2, h(w+1), h^2+3h-w, h(h+1)).$$
\end{rem}

\begin{exam} \label{tablitas srg}
From Theorem \ref{semiprimitive} and Proposition \ref{srg} we obtain Table \ref{tablita srg} below. 
Here $s=\frac{m}{2t}$ where $t$ is the least integer such that $k\mid p^t+1$.
\renewcommand{\arraystretch}{1.1}
\begin{small}
	\begin{table}[h!] 
	\caption{Smallest semiprimitive graphs: srg parameters and spectra}  \label{tablita srg}
	\begin{tabular}{|c|c|c|c|c|c|} \hline 
	graph 			 & srg parameters & spectrum & $t$ & $s$ & pseudo-latin square \\ \hline
					$\G(3,2^4)$  & $(16, 5, 0, 2)$   &  $\{ [5]^1, [1]^{10}, [-3]^{5} \}$ & 1& 2& no \\ 
					$\bar\G(3,2^4)$  & $(16, 10, 6, 6)$   &  $\{ [10]^1, [2]^{5}, [-2]^{10} \}$ & 1& 2& no \\ \hline
					$\G(3,2^6)$  & $(64, 21, 8, 6)$  &  $\{ [21]^1, [5]^{21}, [-3]^{42} \}$ & 1& 3& $PL_3(8)$ \\ 
					$\bar\G(3,2^6)$  & $(64, 42, 26, 30)$  &  $\{ [42]^1, [2]^{42}, [-6]^{21} \}$ & 1& 3& $PL_6(8)$ \\ \hline
					$\G(\bm{3},\bm{5^2})$ & $(25, 8, 3, 2)$   &  $\{ [8]^1, [3]^{8}, [-2]^{16} \}$ & 1& 1& $PL_2(5)$ \\ 
					$\bar\G(\bm{3},\bm{5^2})$ & $(25, 16, 9, 12)$   &  $\{ [16]^1, [1]^{16}, [-4]^{8} \}$ & 1& 1& $PL_4(5)$ \\ \hline 
					$\G(\bm{3},\bm{5^4})$  & $(625, 208, 63, 72)$ &  $\{ [208]^1, [8]^{416}, [-17]^{208} \}$ & 1& 2& no \\ 
					$\bar\G(\bm{3},\bm{5^4})$  & $(625, 416, 279, 272)$ &  $\{ [416]^1, [16]^{208}, [-9]^{416} \}$ & 1& 2& no \\ \hline
					$\G(4,3^4)$  & $(81, 20, 1, 6)$ &    $\{ [20]^1, [2]^{60}, [-7]^{20},  \}$ & 1& 2& no \\ 
					$\bar\G(4,3^4)$  & $(81, 60, 45, 42)$ &    $\{ [60]^1, [6]^{20}, [-3]^{60},  \}$ & 1& 2& no \\ \hline
					$\G(4,3^6)$  & $(729, 182, 55, 42)$ &  $\{ [182]^1, [20]^{182}, [-7]^{546} \}$ & 1& 3& $PL_7(27)$ \\ 
					$\bar\G(4,3^6)$  & $(729, 546, 405, 420)$ &  $\{ [546]^1, [6]^{546}, [-21]^{182} \}$ & 1& 3& $PL_{21}(27)$ \\ \hline
					$\G(\bm{4},\bm{7^2})$  & $(49, 12, 5, 2)$    &  $\{ [12]^1, [5]^{12}, [-2]^{36} \}$ & 1& 1& $PL_2(7)$ \\ 
					$\bar\G(\bm{4},\bm{7^2})$  & $(49, 36, 25, 30)$    &  $\{ [36]^1, [1]^{36}, [-6]^{12} \}$ & 1& 1& $PL_6(7)$ \\ \hline
					$\G(\bm{4},\bm{7^4})$  & $(2401, 600, 131, 156)$ & $\{ [600]^1, [12]^{1800}, [-37]^{600} \}$ & 1& 2& no \\ 
					$\bar\G(\bm{4}, \bm{7^4})$  & $(2401, 1800, 1332, 1355)$ & $\{ [1800]^1, [36]^{600}, [-13]^{1800} \}$ & 1& 2& no \\ \hline
					$\G(\bm{5},\bm{3^4})$  & $(81, 16, 7, 2)$    &  $\{ [16]^1, [7]^{16}, [-2]^{64} \}$ & 2& 1& $PL_2(9)$ \\ 
					$\bar\G(\bm{5},\bm{3^4})$  & $(81, 64, 49, 56)$    &  $\{ [64]^1, [1]^{64}, [-8]^{16} \}$ & 2& 1& $PL_8(9)$ \\ \hline
					$\G(\bm{5},\bm{7^4})$  & $(2401,480,119, 90)$ &  $\{ [480]^1, [39]^{480}, [-10]^{1920} \}$ & 2& 1& $PL_{10}(49)$ \\ 
					$\bar\G(\bm{5},\bm{7^4})$  & $(2401,1920,1560, 1529)$ &  $\{ [1920]^1, [9]^{1920}, [-40]^{480} \}$ & 2& 1& $PL_{40}(49)$ \\ \hline
			\end{tabular}
		\end{table}
	\end{small}

We have marked in bold those graphs $\G(k,p^m)$ with $k \ne p^\ell+1$ for some $\ell \mid \frac m2$.
We point out that, for instance, the graphs with $q=7^4$ do not appear in the Brouwer's lists (\cite{Br}) of strongly regular graphs.
		\hfill $\lozenge$
\end{exam}

\section{Ramanujan GP-graphs} \label{sec:6}
If $\Gamma$ is an $n$-regular graph, then $n$ is the greatest eigenvalue of $\G$. 
Recall that a connected $n$-regular undirected graph is Ramanujan if 
\begin{equation} \label{ram cond 2}
	\lambda(\G) \le 2\sqrt{n-1},
\end{equation}
where $\lambda(\G)$ is the maximum absolute value of the non-principal eigenvalues of $\Gamma$
\begin{equation} \label{lambda G}	 
	\lambda(\G) = \max_{\lambda \in \mathrm{Spec}(\Gamma)} \{|\lambda| : |\lambda|\ne n \}. 
\end{equation}
Here we are interested in Ramanujan generalized Paley graphs: 
we will first classify all semiprimitive GP-graphs which are Ramanujan and then show that all GP-graphs $\G(k,q)$ with $1\le k\le 4$ are indeed Ramanujan.

\subsection{All Ramanujan semiprimitive GP-graphs} \label{sec:7.1}	
We recall that semiprimitive graphs are integral and undirected.
We now give a complete characterization of the semiprimitive generalized Paley graphs which are Ramanujan. 
In particular, we will show that if a semiprimitive GP-graph $\G(k,q)$ is Ramanujan then $k\in \{2,3,4,5\}$.

\begin{thm} \label{rama car}
	Let $q=p^m$ with $p$ prime and let $(k,q)$ be a semiprimitive pair.
	Then, the graph $\G=\G(k,p^m)$ is Ramanujan if and only if $\G^+=\G^+(k,p^m)$ is Ramanujan and this happens if and only if
	\begin{enumerate} 
		\item[$(a)$] $\G$ is the classic Paley graph $\G(2,q)$, with $q \equiv 1 \pmod 4$,
	\end{enumerate}
	or $m$ is even and $k, p, m$ are as in one of the following cases:
	\begin{enumerate} 
		\item[$(b)$] $k=3$, $p=2$ and $m\ge 4$.  \sk
		
		\item[$(c)$] $k=3$, $p \ne 2$ with $p\equiv 2 \pmod 3$ and $m\ge 2$. \sk
		
		\item[$(d)$] $k=4$, $p=3$ and $m\ge 4$. \sk 
		
		\item[$(e)$] $k=4$, $p\ne 3$ with $p\equiv 3 \pmod 4$ and $m\ge 2$. \sk 
		
		\item[$(f)$] $k=5$, $p=2$ and $m\ge 8$ with $4\mid m$. \sk 
		
		\item[$(g)$] $k=5$, $p\ne 2$ with $p\equiv 2,3\pmod 5$ and $m\ge 4$ with $4\mid m$. \sk 
		
		\item[$(h)$] $k=5$, $p\equiv 4 \pmod 5$ and $m\ge 2$ even. 
	\end{enumerate} 
	Moreover, $\bar{\G}(k,q)$ is Ramanujan for every semiprimitive pair $(k,q)$.
\end{thm}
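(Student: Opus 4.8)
The plan is to reduce the Ramanujan condition \eqref{ram cond 2} for a semiprimitive pair $(k,q)$ to an explicit arithmetic inequality in $k$, $p$ and $m$, using the explicit spectrum from Theorem \ref{semiprimitive}. Since $\G(k,q)$ has only the three eigenvalues $n$, $\lambda_1$ and $\lambda_2$ of \eqref{autoval semip}, we have $\lambda(\G)=\max\{|\lambda_1|,|\lambda_2|\}$. A short estimate shows that $|\lambda_1|=\big|\tfrac{\sigma(k-1)p^{m/2}-1}{k}\big|$ dominates $|\lambda_2|=\big|\tfrac{\sigma p^{m/2}+1}{k}\big|$ whenever $k\ge 2$ (the only subtlety is when $\sigma=-1$, where one checks $(k-1)p^{m/2}-1$ still has larger absolute value than $p^{m/2}-1$), so the Ramanujan condition becomes
\begin{equation} \label{rama ineq}
	\frac{(k-1)p^{m/2}-1}{k} \le 2\sqrt{n-1}, \qquad n=\frac{p^m-1}{k}.
\end{equation}
Squaring \eqref{rama ineq} and writing $Q=p^{m/2}$, this is a quadratic inequality in $Q$: $((k-1)Q-1)^2\le 4k(p^m-1-k)=4k Q^2-4k-4k^2$, i.e. $(k-1)^2Q^2-2(k-1)Q+1\le 4kQ^2-4k^2-4k$, which rearranges to $(k^2-6k+1)Q^2+2(k-1)Q-(4k^2+4k+1)\ge 0$. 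Since $k^2-6k+1<0$ precisely for $k\in\{2,3,4,5\}$ (it is positive for $k\ge 6$ and for $k=1$ the graph is $K_q$), for $k\ge 6$ the leading coefficient is positive but the inequality fails for the relevant large $Q$; a direct check shows it fails for all admissible semiprimitive $q$, ruling out $k\ge 6$. This is the step I expect to require the most care: one must confirm that no sporadic small $q$ sneaks in for $k\ge 6$, using that $q=p^{2ts}\ge p^2\ge 4$ and that $k\mid p^t+1$ forces $q$ to be reasonably large relative to $k$.

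For each $k\in\{2,3,4,5\}$ I would then solve \eqref{rama ineq} explicitly. For $k=2$ (so $q\equiv 1\pmod 4$), $\lambda_1=\tfrac{p^{m/2}-1}{2}$ or $\tfrac{-p^{m/2}-1}{2}$ and \eqref{rama ineq} reads $p^{m/2}+1\le 4\sqrt{(p^m-1)/2-1}$, which holds for every odd prime power $q$; this gives case $(a)$. For $k=3$ the inequality $\tfrac{2p^{m/2}-1}{3}\le 2\sqrt{(p^m-1)/3-1}$ holds for all $q$ in the semiprimitive range except the tiny ones; the semiprimitivity constraint $3\mid p^t+1$ (i.e. $p\equiv 2\pmod 3$, or $p=2$ with $t$ appropriate) together with $q=p^{2ts}$ and $k\ne p^{m/2}+1$ isolates exactly cases $(b)$ and $(c)$, the thresholds $m\ge 4$ for $p=2$ and $m\ge 2$ for $p\ne 2$ coming from excluding $q$ for which $k= p^{m/2}+1$ or $q$ too small. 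The cases $k=4$ and $k=5$ are entirely analogous, using $4\mid p^t+1$ and $5\mid p^t+1$ respectively; for $k=5$ the condition $5\mid p^t+1$ splits into $p\equiv 4\pmod 5$ (with $t=1$, hence $m$ any even number, case $(h)$) and $p\equiv 2,3\pmod 5$ (with $t=2$, hence $4\mid m$, cases $(f)$, $(g)$), and one checks the inequality \eqref{rama ineq} forces the stated lower bounds on $m$.

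Throughout, the equivalence ``$\G$ Ramanujan $\iff$ $\G^+$ Ramanujan'' follows from Theorem \ref{semiprimitive}: when $q$ is even $\G^+=\G$, and when $q$ is odd the nonprincipal eigenvalues of $\G^+$ are exactly $\pm\lambda_1,\pm\lambda_2$, so $\lambda(\G^+)=\lambda(\G)$ and the degree $n$ is unchanged; hence \eqref{ram cond 2} holds for one iff it holds for the other. Finally, for the complement $\bar\G(k,q)$, Theorem \ref{semiprimitive} gives nonprincipal eigenvalues $(k-1)\lambda_2$ and $-1-\lambda_2$ with degree $(k-1)n$; since $\lambda_2=-\tfrac{\sigma p^{m/2}+1}{k}$ has absolute value roughly $\tfrac{p^{m/2}}{k}$, we get $|(k-1)\lambda_2|\approx\tfrac{(k-1)p^{m/2}}{k}$, which is of order $\sqrt q$, while $2\sqrt{(k-1)n-1}$ is of order $\sqrt{(k-1)q}$; thus the Ramanujan bound for $\bar\G$ is satisfied with room to spare for every semiprimitive pair, and a clean estimate $|(k-1)\lambda_2|\le 2\sqrt{(k-1)n-1}$ and $|1+\lambda_2|\le 2\sqrt{(k-1)n-1}$ completes the argument. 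The main obstacle is organizing the finitely many small-$q$ checks for each $k$ cleanly, rather than any conceptual difficulty.
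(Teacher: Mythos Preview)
Your overall strategy coincides with the paper's: compute $\lambda(\G)=|\lambda_1|$ from Theorem~\ref{semiprimitive}, square the Ramanujan bound to get a quadratic inequality in $Q=p^{m/2}$ whose leading coefficient is $4k-(k-1)^2$, observe this forces $k\le 5$, and then treat $k=2,3,4,5$ by hand. The $\G\Leftrightarrow\G^+$ argument and the rough estimate for $\bar\G$ are also the same.

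There are, however, two genuine gaps in the execution. First, your displayed rearrangement has a sign error: moving everything to one side of $(k-1)^2Q^2-2(k-1)Q+1\le 4kQ^2-4k^2-4k$ gives $-(k^2-6k+1)Q^2+2(k-1)Q-(2k+1)^2\ge 0$, not the inequality you wrote; with your version the conclusion ``for $k\ge 6$ the leading coefficient is positive but the inequality fails for large $Q$'' is self-contradictory. Second, and more substantively, you work throughout with $|\lambda_1|=\tfrac{(k-1)Q-1}{k}$, which is only the $\sigma=1$ value; when $\sigma=-1$ one has $|\lambda_1|=\tfrac{(k-1)Q+1}{k}$, and the two cases behave differently. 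The paper splits on $\sigma$: for $\sigma=-1$ positivity of the left side immediately forces $4k-(k-1)^2>0$, hence $k\le 5$, with no small-$q$ check needed; for $\sigma=1$ the paper shows the quadratic $P_k(x)=(4k-(k-1)^2)x^2+2(k-1)x-(2k+1)^2$ has negative discriminant for $k>5$ and hence is negative for \emph{all} real $x$, again dispensing with any sporadic-case hunt. This discriminant argument is precisely the ``most care'' step you flagged, and it resolves it uniformly rather than by appeal to $k\mid p^t+1$ forcing $q$ large. Similarly, in the case-by-case analysis for $k=3,4,5$ the paper always checks against the worst sign $\sigma=-1$ to pin down the exact thresholds on $m$; your sketch should do the same.
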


\begin{proof}
We begin by noticing that, by Theorem \ref{semiprimitive}, the graphs $\G(k,q)$, $\bar{\G}(k,q)$ and $\G^+(k,q)$ are connected for any semiprimitive pair $(k,q)$, since the multiplicity of the principal eigenvalue is one.
Also, that $\G$ is Ramanujan if and only if $\G^+$ is Ramanujan follows directly from the fact that 
$$\lambda(\G^+(k,q))=\lambda(\G(k,q))$$ 
by Theorem \ref{Spectro Gkq}, and hence \eqref{ram cond 2} holds for both or for none of the graphs.

Now, note that $k=1$ is excluded since $(1,q)$ is not a semiprimitive pair and that $k=2$ corresponds to the classic Paley graph $\G(2,q)$, with $q \equiv 1 \pmod 4$, which is well-known to be Ramanujan (hence $(a)$). 
So it is enough to consider semiprimitive pairs $(k,p^m)$ with $k>2$.
	
	We divide the proof of the characterization of semiprimitive Ramanujan GP-graphs into three steps: in steps 1 and 2 we prove the statement for the graphs $\G(k,q)$, and in step 3 we prove it for the complements $\bar \G(k,q)$.

	\noindent \textit{Step 1.} 
	Here we prove that if $\G$ is Ramanujan with $(k,p^m)$ a semiprimitive pair with $k\ne 2$, then $3\le k\le 5$.
	
	Note that for $k\ge 3$ we have $\lambda(\G)=|\lambda_1|$ (see \eqref{autoval semip} in Theorem \ref{semiprimitive}).
	Since $\G$ is Ramanujan and undirected, 
	\eqref{ram cond 2} reads
	$$\tfrac 1k |\sigma(k-1)p^{\frac m2}-1| \le 2\sqrt{\tfrac{p^m-(k+1)}{k}}.$$ 
	This inequality is equivalent to $(k-1)^2 p^m-2\sigma(k-1) p^{\frac m2}+1\le 4k(p^{m}-(k+1))$  which holds if and only if 
	\begin{equation}\label{rammast2}
	4k(k+1)+1\le p^m (4k-(k-1)^2)+ 2(k-1)\sigma p^{\frac m2}.
	\end{equation}

	Assume first that $\sigma=-1$. Then, \eqref{rammast2} takes the form
	\begin{equation*}\label{rammast}
	2(k-1)p^{\frac m2}+4k(k+1)+1\le p^m (4k-(k-1)^2).
	\end{equation*} 
	Since the left hand side of this inequality is positive, we have that $4k-(k-1)^2>0$, and this can only happen if $ k \le 5$.

	Now, let $\sigma=1$. In this case, inequality \eqref{rammast2} is equivalent to 
	\begin{equation}\label{rammast3}
	0\le (4k-(k-1)^2) p^m+ 2(k-1)p^{\frac m2} -(2k+1)^2.
	\end{equation}
	Suppose $k>5$ and consider the quadratic polynomial 
	$$P_{k}(x)=(4k-(k-1)^2) x^2+ 2(k-1)x -(2k+1)^2.$$ 
	Hence, $P_k(x)$ has negative leading coefficient and its discriminant is given by 
	$$\Delta(k)= 4 \big( (k-1)^2 - ((k-3)^2-8)(2k+1)^2 \big).$$
	Since $(2k+1)^2> (k-1)^2$, the sign of $\Delta(k)$ depends on $(k-3)^2-8$. Since $k>5$, we have that $(k-3)^2-8>0$ and thus $\Delta(k)<0$. So, the quadratic polynomial $P_{k}$ has no real roots and since $P_{k}(0)=-(2k+1)^2<0$, we obtain that $P_{k}(x)<0$ for all $x\in \mathbb{R}$, in particular $P_{k}(p^{\frac m2})<0$ for all $p$ and $m$, contradicting \eqref{rammast3}. 
	Therefore, if $\G$ is Ramanujan then $k\le 5$, as we wanted to show.

	\noindent \textit{Step 2.} 
	We now show that the pair $(k,q)$ semiprimitive with $k\le 5$ can only happen as stated in the theorem; and, in these cases, $\G(k,q)$ is Ramanujan.

	As mentioned at the beginning, the case $k=1$ is excluded and $k=2$ corresponds to the classic Paley graph, which is Ramanujan.
	If $k=3$, then necessarily $p\equiv 2 \pmod 3$ and $m$ is even, for if not the pair $(k,p^m)$ is not semiprimitive. 
	In this case, \eqref{rammast2} is given by $49\le 8 p^m+4\sigma p^{\frac m2}$.
	The worst possibility is when $\sigma =-1$, and in this case the previous inequality reads 
	$$12+\tfrac{1}{4}\le p^{\frac m2}(2p^{\frac m2}-1).$$
	This clearly holds if and only if $p$ is odd and $m\ge 2$ or $p=2$ and $m\ge 4$, and thus $\G(3,p^m)$ is Ramanujan in 
	these cases. This proves $(b)$ and $(c)$.

	If $k=4$, then we must have $p\equiv 3 \pmod 4$ and $m$ is even, for if not the pair $(k,p^m)$ is not semiprimitive. 
	In this case, \eqref{rammast2} is given by $81\le 7p^m+6\sigma p^{\frac m2}$. 
	As before, the worst case is when $\sigma =-1$, and thus the inequality is equivalent to 
	$$11+ \tfrac{4}{7}\le p^{\frac m2}(p^{\frac m2}-\tfrac{6}{7}) .$$
	This holds if and only if $p>3$ and $m\ge 2$ or $p=3$ and $m\ge 4$ and hence $\G(4,p^m)$ is Ramanujan in these cases, thus showing $(d)$ and $(e)$.

	In the last case, if $k=5$, then $(5,p^m)$ is semiprimitive if and only if $p\equiv 2,3 \pmod 5$ and $4\mid m$ or 
	else $p\equiv 4 \pmod 5$ and $m\ge 2$ even. 
	On the other hand, in this case \eqref{rammast2} is given by $121 \le 4p^m+8\sigma p^{\frac m2}$,
	which is equivalent to 
	$$30+\tfrac{1}{4}\le p^{\frac m2}(p^{\frac m2}+2\sigma).$$
	If $p=2$, then necessarily $m\ge 8$ since $4\mid  m$ and $m=4$ does not satisfy the above inequality.
	Clearly, the inequality holds for $p\equiv 2,3 \pmod 5$ with $p\ne 2$ and $4\mid m$.
	Finally, notice that the right hand side of the inequality increases when $p$ increases. 
	The first prime $p$ with $p\equiv 4 \pmod 5$ is $p=19$ that clearly satisfies the inequality for $m\ge 2$ even, 
	so we obtain that the inequality holds for all of primes $p\equiv 4 \pmod 5$ with $m\ge 2$ even.
	In this way we have shown that $\G(5,p^m)$ is Ramanujan in all the cases in the statement, proving items $(f)$--$(h)$.

	\noindent \textit{Step 3.} 
	Now, we consider the complementary graphs $\bar{\Gamma}=\bar{\G}(k,q)$. We have that 
	$$\lambda(\bar{\G}) = |\bar{\lambda}_1|=(k-1) \tfrac{p^{\frac m2}+\sigma}{k}$$ 
	and the regularity degree of $\bar{\G}$ is $n(k-1)$. 
	Notice that we can assume that $k>2$, since $k=2$ correspond to the classic Paley graph which is self-complementary, and hence Ramanujan. 
	Also, without loss of generality we can assume that $\sigma=1$. 
	Inequality \eqref{ram cond 2} becomes 
	\begin{equation} \label{ram comp}
	(k-1)\tfrac{p^{\frac m2}+1}{k}\le 2\sqrt{\tfrac{(p^m-1)(k-1)-k}{k}}
	\end{equation}
	which is equivalent to 
	$(k-1)^{2}p^m+2(k-1)^2p^{\frac m2}+(k-1)^2 \le 4k(p^m(k-1)-(2k-1))$
	and therefore we have   
	$$2(k-1)^2p^{\frac m2}+(k-1)^2+4k(2k-1)\le p^{m}(4k(k-1)-(k-1)^2).$$
	Notice that $(k-1)^2+4k(2k-1)=(3k-1)^2$ and $4k(k-1)-(k-1)^2=(k-1)(3k+1)$. 
	Let us consider the quadratic polynomial 
	$$Q_{k}(x)=x^2-b_{k} x-c_k \qquad \text{where} \qquad 
	b_{k}=\tfrac{2(k-1)}{(3k+1)} \quad \text{and} \quad c_{k}=\tfrac{(3k-1)^2}{(k-1)(3k+1)}.$$
	Hence, $\bar \G(k,q)$ is Ramanujan if and only \eqref{ram comp} holds, that is if and only if $Q_k(p^{\frac m2})>0$.
	
	Clearly $b_{k}<1$ and $4c_{k}<15$, this implies that the greatest real root $r$ of $Q_k$ satisfies 									
	$$r = \tfrac{b_{k}}{2} + \tfrac{1}{2} \sqrt{b_k^2+4c_k} < \tfrac 12 +2 <3.$$
	Since $(k,p^m)$ is a semiprimitive pair and $k>2$, we have that $p^{\frac m2}\ge 3$. 
	This implies that $Q_{k}(p^{\frac m2})>0$ since $Q_{k}$
	has a positive leading coefficient. Therefore $\bar \G(k,p^m)$ is Ramanujan for all semiprimitive pair $(k,p^m)$ with $\sigma=1.$
	The case $\sigma=-1$ can be proved analogously.
\end{proof}

The previous result gives the following eight infinite families of Ramanujan semiprimitive GP-graphs: 
\begin{enumerate}[$(a)$]
	\item \: $\{ \G(2,q)\}$ with 
	$q\equiv 1 \pmod 4$, i.e.\@ the classic Paley graphs, 
	
	\item \: $\{ \G(3,4^{t})\}_{t\ge 2}$, 
	
	\item \: $\{ \G(3,p^{2t})\}_{t\ge 1}$ with $p \equiv 2 \pmod 3$ and $p\ne 2$, 
	
	\item \: $\{ \G(4,9^{t})\}_{t\ge 2}$, 
	
	\item \: $\{ \G(4,p^{2t})\}_{t\ge 1}$ with $p \equiv 3 \pmod 4$ and $p\ne 3$, 
	
	\item \: $\{ \G(5,16^{t})\}_{t\ge 2}$, 
	
	\item \: $\{ \G(5,p^{4t})\}_{t\ge 1}$, with $p \equiv 2,3 \pmod 5$ and $p\ne 2$, 
	
	\item \: $\{ \G(5,p^{2t})\}_{t\ge 1}$ with $p \equiv 4 \pmod 5$.
\end{enumerate}
Note that five of them are valid for an infinite number of primes.
The smallest graphs in each family are: 
$$\G(2,5), \: \G(3,16), \: \G(3,49), \: \G(4,81), \: \G(4,49), \: \G(5,256), \: \G(5,81), \: \text{and} \: \G(5,361),$$
respectively. Also, notice that all the graphs in Table \ref{tablita srg} are Ramanujan, corresponding to the families ($c$), ($e$) and ($g$).

\begin{rem}
	($i$) The Ramanujan GP-graphs $\G(k,q)$ with $k=p^\ell+1$ are characterized in Theorem~8.1 in \cite{PV1}. 
	There, we proved that 
		$$\Gamma_{q,m}(\ell) = \G(p^{\ell}+1,p^m),$$ 
	with $\ell \mid m$ such that $m_\ell$ even 
	and $\ell \ne \frac m2$, is Ramanujan if and only if $q=2,3,4$ with $\ell=1$ and $m\ge 4$ even. 
	This says that $\G(p^{\ell}+1,p^m)$ is Ramanujan only in the cases ($b$), ($d$) and ($f$),  
	giving the infinite families  
	$$\{ \G(3, 4^t) \}_{t\ge 2}, \qquad \{ \G(4, 9^t)\}_{t\ge 2} \qquad \text{ and } \qquad \{\G(5, 16^t)\}_{t\ge 2}$$
	of Ramanujan graphs. 
	The first two families coincide with those in ($b$) and ($d$), while the third one gives just half the graphs in ($f$), precisely those with $t$ even in ($f$). 
	Thus, the last proposition extends this characterization of Ramanujan GP-graphs $\G(q^\ell+1,q^m)$ to all semiprimitive pairs $(k,p^m)$,  that is in the case $q=p$. 
	
	($ii$) 
	When $p=2$, the last proposition gives nothing new, since the possible values of $k \in \{2,3,4,5\}$ such that $(k,2^m)$ is a semiprimitive pair reduces to $k=3, 5$, which corresponds to the cases $p=2$ with $\ell=1, 2$ in ($i$) above.
\end{rem}

\begin{exam}
	From Theorem \ref{rama car}, the following GP-graphs are Ramanujan:
	$$\begin{tabular}{|p{1cm}|p{9cm}|} \hline 
	$p=2$ & $\G(3,16)$, $\G(3,64)$, $\G(3,256)$, $\G(5, 256)$, \\ \hline 
	$p=3$ & $\G(2,81)$, $\G(4,81)$, $\G(\textbf{5}, \textbf{81})$, 
	$\G(2,729)$, $\G(4,729)$, $\G(2,6{.}561)$, $\G(4,6{.}561)$, $\G(\textbf{5}, \textbf{6{.}561})$ \\ \hline
	$p=5$ & $\G(2,25)$, $\G(\textbf{3},\textbf{25})$, $\G(2,625)$, $\G(\textbf{3},\textbf{625})$, 
	$\G(2,15{.}625)$, $\G(\textbf{3},\textbf{15{.}625})$, $\G(2,390{.}625)$, $\G(\textbf{3},\textbf{390{.}625})$ \\ \hline  
	$p=7$ & $\G(\textbf{4},\textbf{49})$, $\G(\textbf{4},\textbf{2{.}401})$, $\G(\textbf{5},\textbf{2{.}401})$, 
	$\G(\textbf{4},\textbf{117{.}649})$, $\G(\textbf{4},\textbf{5{.}764{.}801})$, $\G(\textbf{5},\textbf{5{.}764{.}801})$ \\ \hline  
	\end{tabular}$$
	where $6{.}561=3^8$, $15{.}625=5^6$, $390{.}625=5^8$, $7^4=2{.}401$, $7^6=117{.}649$ and $7^8=5{.}764{.}801$. 
	We have marked in bold those graphs $\G(k,p^m)$ with $k>2$ and $k\ne p^\ell+1$ for some $\ell \mid \frac m2$.
	\hfill $\lozenge$
\end{exam}

\subsection{Ramanujan graphs $\G(k,q)$ with $1\le k \le 4$} \label{subsec: Ram G3G4}
It is well-known that the complete graphs $K_n$ and the classic Paley graphs $P(q)$ with $q\equiv 1 \pmod 4$ are Ramanujan (it is immediate to check it from \eqref{ram cond 2} and Examples \ref{g1q} and \ref{Ex:Paley}). 

In general, $\G(k,q)$ can be directed.  
There are (to the authors knowledge) two notions of Ramanujan $n$-regular digraphs. 
We recall that a directed graph is $n$-regular if its in-degree and out-degree are both equal to $n$. 
A connected $n$-regular undirected graph is \textit{Ramanujan} if it satisfies \eqref{ram cond 2}, that is 
\begin{equation*}
\lambda(\G) \le 2\sqrt{n-1},
\end{equation*}
where $\lambda(\G)$ is the maximum absolute value of the non-principal eigenvalues of $\Gamma$. 
An $n$-regular connected directed graph $\G$ is Ramanujan if it satisfies \eqref{ram cond 2} and also its adjacency matrix can be diagonalized by a unitary matrix, see for instance \cite{L}. 
A more recent definition due to Lubotzky and Parzanchevski (see \cite{LuPa}, \cite{Pa}) is that an $n$-regular connected digraph $\G$ is Ramanujan if 
\begin{equation*} \label{eq: Ram dir}
	\lambda(\G) \le \sqrt n	.
\end{equation*}
One can check from the spectrum given in \eqref{spec paley dir} that the directed Paley graphs $\vec P(q)$, i.e.\@ those $\G(2,q)$  with $q \equiv 3 \pmod 4$, are Ramanujan under the two notions
(in the second sense, it is mentioned in \S 13.3.3 in \cite{Pa} for $q=p$ prime). 
In this way we have that the graphs $\G(1,q)$ and $\G(2,q)$ are Ramanujan for any $q$. 

We will now study which GP-graphs $\G(3,q)$ and $\G(4,q)$ are Ramanujan. In Theorem \ref{rama car} we have found those semiprimitive GP-graphs $\G(3,q)$ and $\G(4,q)$ which are Ramanujan (they are the graphs given in $(b)$--$(e)$ in the previous list). 
For this reason, we now assume that $\G(k,q)$ is non-semiprimitive for $k=3,4,$ where $q=p^m$, that is to say $p \equiv 1 \pmod k$.

To study the Ramanujanicity of the graphs $\G(3,q)$ we will need a lemma. So, we first fix some notations.
Denote by $\lambda_0,\lambda_1,\lambda_2$ the non-principal eigenvalues of $\G(3,q)$, 
that is (see $(a)$ in Theorem \ref{gp3q})
\begin{equation}\label{autoval G3}
\lambda_0 = \tfrac{a\sqrt[3]{q}-1}{3}, \qquad \lambda_1 = \tfrac{-\frac{1}{2} (a+9b)\sqrt[3]{q}-1}{3} \qquad \text{and} \qquad 
\lambda_2 = \tfrac{-\frac 12 (a-9b) \sqrt[3]{q}-1}{3},
\end{equation}
where $a,b$ are integers uniquely determined by the conditions \eqref{ab27}.
Notice that $\lambda(\G(3,q))$ can be realized by any of the three non-principal eigenvalues of $\G(3,q)$.

\begin{lem}\label{lem signs}
	Let $q=p^{3t}$ for some $p$ prime such that $p\equiv 1\pmod{3}$ and let $\lambda_1$ and $\lambda_2$ be 
	as in \eqref{autoval G3}. 
	Then, we have: 
	\begin{enumerate}[$(a)$]
		\item If $|\lambda_1|>|\lambda_2|$, then $a$ and $b$ have the same sign. \sk 
		\item If $|\lambda_2|>|\lambda_1|$, then $a$ and $b$ have different signs.
	\end{enumerate}
\end{lem}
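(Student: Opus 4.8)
The plan is to reduce both parts of the lemma to a single sign computation. Set $P=\sqrt[3]{q}=p^{t}$, which is legitimate since $q=p^{3t}$, and note $P>0$; moreover, because $p\equiv 1\pmod 3$ forces $p\ge 7$, we in fact have $P\ge 7$. Two preliminary observations: first, $a\neq 0$ since $(a,p)=1$ in \eqref{ab27}; second, we may assume $b\neq 0$, for if $b=0$ then $\lambda_1=\lambda_2$ and neither hypothesis $|\lambda_1|>|\lambda_2|$ nor $|\lambda_2|>|\lambda_1|$ can hold. Thus $a$ and $b$ are nonzero integers and it makes sense to speak of their signs.

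Since $|\lambda_1|>|\lambda_2|$ is equivalent to $\lambda_1^2>\lambda_2^2$, I would compute $\lambda_1^2-\lambda_2^2=(\lambda_1-\lambda_2)(\lambda_1+\lambda_2)$ directly from the explicit formulas in \eqref{autoval G3}. A short calculation gives
$$\lambda_1-\lambda_2 = -3bP \qquad\text{and}\qquad \lambda_1+\lambda_2=\tfrac{-aP-2}{3},$$
and hence $\lambda_1^2-\lambda_2^2 = bP\,(aP+2)$. So the comparison between $|\lambda_1|$ and $|\lambda_2|$ is governed entirely by the sign of $b\,(aP+2)$, since $P>0$.

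The only step that is not pure bookkeeping is pinning down the sign of $aP+2$, and this is where the bound $P\ge 7$ is used: since $|a|\ge 1$ we get $|aP|\ge 7>2$, so $aP+2$ has the same sign as $aP$, i.e.\ the same sign as $a$. Feeding this back in, $\lambda_1^2-\lambda_2^2$ has the same sign as the product $ab$, and reading off the two possible strict inequalities then yields parts (a) and (b). (Equivalently one can phrase the bound as $\sqrt[3]{q}>2$, which always holds in this case of Theorem \ref{gp3q}, where $q\ge 7^{3}$.)

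There is essentially no serious obstacle here — the heart of the argument is the one-line identity $\lambda_1^2-\lambda_2^2 = b\sqrt[3]{q}\,(a\sqrt[3]{q}+2)$ — but one has to track the signs of $a$ and $b$ carefully throughout, and one must not forget to invoke $\sqrt[3]{q}>2$, since it is exactly this inequality that prevents the additive constant $2$ from flipping the sign of $a\sqrt[3]{q}+2$.
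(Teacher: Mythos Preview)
Your approach is exactly the paper's: compare $\lambda_1^2$ and $\lambda_2^2$ and reduce to the sign of $b(ap^t+2)$. The identity $\lambda_1^2-\lambda_2^2 = bP(aP+2)$ with $P=p^t$ is correct, as is the observation that $aP+2$ carries the sign of $a$ because $P\ge 7$.

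The problem is your final ``reading off'' step. The identity says precisely that $|\lambda_1|>|\lambda_2|$ is equivalent to $bP(aP+2)>0$, hence to $ab>0$; that is, $a$ and $b$ have the \emph{same} sign, which is the opposite of what part~(a) asserts. (Concretely: for $p=7$, $t=1$ one has $4\cdot 7=1+27$, so $a=1$, $b=1$, giving $\lambda_1=-12$, $\lambda_2=9$, $|\lambda_1|>|\lambda_2|$ with $a,b$ both positive.) So your argument does not prove the lemma as written; carried through correctly it yields the statement with the conclusions of (a) and (b) interchanged. The paper's own proof contains the parallel slip --- it records $b(2+ap^t)<0$ where the algebra in fact gives $b(2+ap^t)>0$ --- so the discrepancy points to a misprint in the lemma (and in its later use to obtain $(a+9b)^2\le a^2+81b^2$) rather than to a flaw in your method.
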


\begin{proof}
($a$) In this case, $|\lambda_1|>|\lambda_2|$ is equivalent to $\tfrac 12 (a+9b)p^t+1 > \tfrac 12 (a-9b)p^t+1 $. This implies that 
	$$\big( (a+9b)p^t+2 \big)^2 > \big( (a-9b)p^t+2 \big)^2,$$
from which after some computations we obtain that 
	$$b(2+ap^t)>0.$$
Hence, $b$ and $2+ap^t$ have the same sign. Taking into account that $2+ap^t$ has the same sign as $ap^t$, since $p\ge 7$, 
and so the same sign of $a$, we see that $a$ and $b$ have the same sign. 
	
\noindent ($b$) 
This case can be proved in the same way as ($a$), by noticing that $|\lambda_1|<|\lambda_2|$ implies that
$b (2+ap^t)<0$.
\end{proof}

We now show that any non-semiprimitive GP-graph $\G(3,q)$  and any non-semiprimitive GP-graph $\G(4,q)$ with $q$ a square are Ramanujan.

\begin{thm}\label{teo G3 Ram}
Let  $k \in \{3,4\}$ and  $q=p^{m}$ for some prime $p$ and $m\in \N$. 
If $p\equiv 1 \pmod{k}$, with $m$ even if $k=4$, then $\G(k,q)$ is Ramanujan.
\end{thm}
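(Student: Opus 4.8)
The plan is to bound every non-principal eigenvalue of $\G(k,q)$ in absolute value by $\tfrac{(k-1)\sqrt q+1}{k}$ and then to compare this quantity with $2\sqrt{n-1}$, where $n=\tfrac{q-1}{k}$. First I would note that under the hypotheses $\G(k,q)$ is connected and \emph{undirected}: for $k=3$ the condition $p\equiv 1\pmod 3$ (so $p$ is odd) gives $6\mid q-1$, hence $3\mid\tfrac{q-1}{2}$; for $k=4$ the parity of $m$ gives $p^m\equiv 1\pmod 8$, hence $4\mid\tfrac{q-1}{2}$ (both facts are already recorded in Theorems~\ref{gp3q} and \ref{gp4q}). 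Consequently all eigenvalues are real and the relevant Ramanujan condition is \eqref{ram cond 2}. By Theorem~\ref{Spectro Gkq} the non-principal eigenvalues of $\G(k,q)$ are exactly the cyclotomic Gaussian periods $\eta_i^{(k,q)}$, $0\le i\le k-1$.

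The key estimate is the classical bound $\big|\eta_i^{(k,q)}+\tfrac1k\big|\le\tfrac{k-1}{k}\sqrt q$, which follows upon writing $\eta_i^{(k,q)}+\tfrac1k$ as $\tfrac1k$ times a sum of the $k-1$ Gauss sums attached to the nontrivial multiplicative characters of $\ff_q$ of order dividing $k$, each of modulus $\sqrt q$; hence
$$\big|\eta_i^{(k,q)}\big|\ \le\ \tfrac{(k-1)\sqrt q+1}{k}\qquad(0\le i\le k-1).$$
Alternatively, and closer to the setup suggested by Lemma~\ref{lem signs}, the same bound can be obtained by estimating the explicit eigenvalues of Theorems~\ref{gp3q} and \ref{gp4q} directly: for $k=3$ using the defining relations $a^2+27b^2=4q^{1/3}$ in case $(a)$ and $a^2+27b^2=4q$ in case $(b)$ together with $(a\pm 9b)^2\le 4(a^2+27b^2)$ and $|W|=\sqrt q$; for $k=4$ using $c^2+4d^2=\sqrt q$ (resp.\ $c^2+4d^2=q$) together with $\sqrt{2(q\pm c\sqrt q)}\le 2\sqrt q$.

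It then remains to verify the inequality $\tfrac{(k-1)\sqrt q+1}{k}\le 2\sqrt{\tfrac{q-1}{k}-1}$. After squaring, for $k=3$ this is equivalent to $8q-4\sqrt q-49\ge 0$ (true for every $q\ge 8$) and for $k=4$ to $7q-6\sqrt q-81\ge 0$ (true for every $q\ge 16$). Since $p\equiv 1\pmod k$ forces $q=p^m\ge 13$ when $k=3$, with the unique exception $q=7$, and $q=p^m\ge 25$ when $k=4$ (because $p\ge 5$ and $m$ is even), the inequality holds in every required case except $\G(3,7)$; and $\G(3,7)=\G(\tfrac{7-1}{2},7)=C_7$ is the $7$-cycle, whose non-principal eigenvalues $2\cos(2\pi j/7)$ satisfy $|2\cos(2\pi j/7)|<2=2\sqrt{n-1}$, so it is Ramanujan as well. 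The main obstacle is exactly this small-order exception: the uniform bound is slightly too weak for $\G(3,7)$, which therefore has to be recognised and treated by hand; a preliminary point that also requires care is confirming that the hypotheses keep us in the undirected regime, so that \eqref{ram cond 2}, and not one of the stronger criteria for directed graphs, is the condition to be checked.
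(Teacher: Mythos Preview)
Your proof is correct and takes a cleaner route than the paper's. The paper proceeds case by case through the explicit eigenvalue formulas of Theorems~\ref{gp3q} and \ref{gp4q}: for $k=3$ it splits into $3\mid m$ (where it invokes Lemma~\ref{lem signs} on the signs of $a,b$ to control $(a\pm 9b)^2$) and $3\nmid m$ (where it bounds $|x_j|\le\tfrac{1+2\sqrt q}{3}$ via $|C|=\sqrt q$, which is exactly your bound for $k=3$); for $k=4$ it splits into $4\mid m$ and $m\equiv 2\pmod 4$ and estimates each eigenvalue formula separately using $|c|\le p^t$, $|d|\le\tfrac12 p^t$. Your uniform Gauss-sum estimate $|\eta_i+\tfrac1k|\le\tfrac{k-1}{k}\sqrt q$ subsumes all of these computations in one stroke and renders Lemma~\ref{lem signs} unnecessary. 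You are also right to single out $\G(3,7)$: the inequality $8q-4\sqrt q-49\ge 0$ in fact fails at $q=7$ (it gives $7-4\sqrt 7<0$), so the uniform bound is genuinely too weak there and the $7$-cycle must be checked directly --- a boundary case the paper's argument glosses over by asserting the bound ``for any $q\ge 7$''.
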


\begin{proof}
Notice that the hypothesis $p\equiv 1 \pmod{k}$ implies that $\G(k,q)$ is well defined and it is non-semiprimitive. 
We divide the proof in two parts, one for $\G(3,q)$ and one for $\G(4,q)$.
By Theorems \ref{gp3q} and \ref{gp4q} we know that $\G(3,q)$ and $\G(4,q)$ are both undirected graphs in all the cases. Thus,
by \eqref{ram cond 2}, $\Gamma$ is Ramanujan if and only if $\lambda(\G)\le 2\sqrt {n-1}$.

\noindent \textit{$\bullet$ The graphs $\G(3,q)$}. We begin by showing that $\G(3,q)$ in the non-semiprimitive case is Ramanujan. 
Let $p\equiv 1 \pmod 3$, $k=3$, and $n=\frac{q-1}3$ where $q=p^{m}$. 

\sk 

$(i)$ Suppose that $m=3t$ for some $t\in \N$ (so we are in case $(a)$ of Theorem \ref{gp3q}). 
Assume first that 
	$$\lambda(\G(3,q)) = |\lambda_0| = |\tfrac{ap^t-1}{3}|.$$
Notice that $\lambda(\G(3,q)) \le 2 (\tfrac{q-1}{3}-1)^{\frac 12}$ is equivalent to
	$$|ap^t-1| \le 2\sqrt{3} \sqrt{p^{3t}-4}.$$
Since $|ap^t-1|=|a|p^{t}\pm 1$, after some computations, we have that the above inequality is equivalent to
	$$(|a|p^{t}\pm 1)^2\le 12 (p^{3t}-4).$$
By taking into account that $a^2\le 4p^t$, we have that 
	$$(|a|p^{t}\pm 1)^2\leq 4p^{3t}+2|a|p^{t}+1.$$ 
Finally, since $p\equiv 1 \pmod{3}$ then $p\ge 7$ and so we have that 
$4p^{3t}+2|a|p^{t}+1\le 12 (p^{3t}-4)$ is always true which implies that $\G(3,q)$ is Ramanujan in this case.
	
Now assume that 
	$$\lambda(\G(3,q)) = |\lambda_1| = |\tfrac{-\frac{1}{2}(a+9b)p^t-1}{3}|.$$ 
In this case, $\lambda(\G(3,q))\le 2 (\frac{q-1}{3}-1)^{\frac 12}$ is equivalent to
	$$|\tfrac{-(a+9b)p^t-2}{6}|\leq \tfrac{2\sqrt{3}}{3}\sqrt{p^{3t}-4}.$$  
Since $a+9b\neq 0$ and $p\ge 7$, we have that $|-(a+9b)p^t-2|=|a+9b|p^t \pm 2$, 
and so the above inequality is equivalent to
	$$(|a+9b|p^t \pm 2)^2\leq 48(p^{3t}-4).$$
By Lemma \ref{lem signs}, the integers $a$ and $b$ have the same sign and so $(a+9b)^2= a^2+18 |a| |b|+81b^2$, which implies that
	$$(|a+9b|p^t \pm 2)^2\le (a^2+18|a||b|+81b^2)p^{2t}+ 4 |a+9b|p^t +4.$$
By taking into account that $4p^t=a^2+27b^2$ we have that 
$$
|a|\le 2 p^{\frac{t}{2}} \quad \text{and} \quad |b| \le \tfrac{2\sqrt{3}}{9} p^{\frac{t}{2}}
$$ 
which implies that 
$$
a^2+18|a||b|+81b^2  \le 12 p^{\frac{t}{2}}+8\sqrt{3} p^{\frac{t}{2}}=4(3+2\sqrt{3}) p^{\frac{t}{2}}\le 28p^{\frac{t}{2}}.
$$
Now, since $a$ and $b$ have the same sign and $4p^t=a^2+27b^2$, we have that
	$$|a+9b|= |a|+9|b|\le 2(1+\sqrt{3}) p^{\frac{t}{2}}\le 6 p^{\frac{t}{2}}.$$
In this way we obtain that 
	$$(a^2+18|a||b|+81b^2)p^{2t}+ 4 |a+9b|p^t +4\le 28 p^{3t}+ 24 p^{\frac{3t}{2}}+4.$$
Finally, since $p\ge 7$, we have that $28 p^{3t}+ 24p^{\frac{3t}{2}}+4\le 48(p^{3t}-4)$, which implies that $\G(3,q)$ is Ramanujan.

The remaining case,  $\lambda(\G(3,q))=|\lambda_2|$, can be proved in a similar way, using item ($b$) instead of ($a$) of the above Lemma.

$(ii)$ Now assume that $3\nmid m$ (so we are in case $(b)$ of Theorem \ref{gp3q}). In this case, the eigenvalues of $\G(3,q)$ are given by
			$$x_j= -\tfrac 13 \big( 1+\omega^j C+ \tfrac{q}{\omega^j C} \big) \qquad \text{with} \qquad 
			  C = \sqrt[3]{q} \sqrt[3]{\tfrac 12 (-a_0 + i3\sqrt 3 b_0)}$$			
for $j=0,1,2$ where $\omega=e^{\frac{2\pi i}{3}}$ and $a_0, b_0$ are integers satisfying 
$4q=a_0^2+27b_0^2$, $a\equiv 1 \pmod 3$ and $(a_0,p)=1$.
Notice that $|C|=\sqrt{q}$. In fact, 
	$$|C| = \sqrt[3]q \, |\tfrac{a_0+i\sqrt{27}b_0}2|^{\frac 13}= \sqrt[3]q \, \big(\sqrt{\tfrac{a_0^2+ 27b_0^2}4}\big)^{\frac 13}=
	\sqrt[3]q \sqrt[3]{\sqrt{q}} = \sqrt q.$$
Thus, for any $j\in \{0,1,2\}$ we have that  
	$|x_j|\le \tfrac{1+2\sqrt{q}}{3}$. 
It is straightforward to see that 
	$$\tfrac{1+2\sqrt{q}}{3}\le 2\sqrt{n-1}$$ 
holds for any $q\ge 7$, since it is equivalent to $8q-4\sqrt q -49 \ge 0$. Therefore, \eqref{ram cond 2} holds, and $\G(3,q)$ is Ramanujan in this case.

Thus, we have seen that any non-semiprimitive GP-graph $\G(3,p^{m})$ with $m\in \N$ (i.e.\@ with $p\equiv 1 \pmod 3$) is Ramanujan.

\sk 

\noindent \textit{$\bullet$ The graphs $\G(4,q)$}. 
We now look at the non-semiprimitive GP-graphs $\G(4,q)$. So, let $k=4$, $n=\frac{q-1}4$ and $q=p^m$  for some $m\in \N$ even. 

$(i)$ Suppose first that $4\mid m$.
By $(a)$ in Theorem~\ref{gp4q}, the spectrum of $\G(4,p^{4t})$ is given by
	$$\big\{ [n]^1, \big[\tfrac{p^{2t} + 4dp^{t}-1}{4}\big]^n, \big[\tfrac{p^{2t} - 4dp^t-1}{4}\big]^n, 
	\big[\tfrac{-p^{2t} + 2c p^{t}-1}{4}\big]^n, \big[\tfrac{-p^{2t} - 2c p^{t}-1}{4}\big]^n \big\}$$
where $n=\frac{p^{4t}-1}{4}$ and $c,d$ are integers uniquely determined by $p^{2t}=c^2+4d^2$, $c\equiv 1 \pmod 4$ and $(c,p)=1$.
In particular we have that
	\begin{equation}\label{eq |c|d|}
		|c|\le p^{t} \qquad \text{and}  \qquad |d|\le \tfrac 12 p^t.
	\end{equation}
By a simple comparison of the eigenvalues, we have that 
	\begin{equation*}
		\lambda(\G(4,p^{4t}))= \max \Big\{ \frac{p^{2t} + 4|d|p^t-1}{4}, \frac{p^{2t} - 2|c|p^t+1}{4} \Big\}.
	\end{equation*}
In this case the inequality $\lambda(\G(4,p^{4t}))\le 2\sqrt{n-1}$ is equivalent to
	\begin{equation}\label{ram}
		\lambda(\G(4,p^{4t}))^{2}\le p^{4t}-5.
	\end{equation}
It is enough to see what happens in any possible case.
	
Suppose first that $\lambda(\G(4,p^{4t}))= \tfrac14 (p^{2t} + 4|d|p^t-1)$, in this case the equation \eqref{ram} turns into
	$$(p^{2t} + 4|d|p^t-1)^2\le 16p^{4t}-80.$$
Clearly, $$(p^{2t} + 4|d|p^t-1)^2\le (p^{2t} + 4|d|p^t)^2+1= p^{4t}+8|d|p^{3t}+ 16d^2 p^{2t}+1.$$
By \eqref{eq |c|d|} we have that 
	$8|d|p^{3t}+ 16d^2 p^{2t}\le 4p^{4t}+4p^{4t}=8p^{4t}$ 
and hence 
	$$(p^{2t} + 4|d|p^t-1)^2\le 9 p^{4t}+1,$$
since $p\ge 5$, we have that $9 p^{4t}+1\le 16p^{4t}-80$ is true and so $\G(4,p^{4t})$ is Ramanujan as desired.

On the other hand, when $\lambda(\G(4,p^{4t})) = \tfrac 14 (p^{2t} - 2|c|p^t+1)$, by \eqref{eq |c|d|} we have that 
	$$(p^{2t} - 2|c|p^t-1)^2\le (p^{2t}+1)^2+4c^2p^{2t}\le 4p^{4t}+4p^{4t}=8p^{4t}\le 16p^{4t}-80.$$
A similar argument as above allows us to conclude that $\G(4,p^{4t})$ is Ramanujan as asserted. 
	
$(ii)$ Now, if $m\equiv 2\pmod{4}$, proceeding similarly as in the previous case, by $(b)$ in Theorem~\ref{gp4q} we have that 
	\begin{equation*}
		\lambda(\G(4,p^{4t+2}))= \max \Big\{ \tfrac{p^{2t+1} +1 + \sqrt{2(p^{4t+2}+cp^{2t+1})}}{4}, \tfrac{p^{2t+1} -1 + \sqrt{2(p^{4t+2}-cp^{2t+1})}}{4} \Big\}
	\end{equation*}
where  $n=\frac{p^{4t+2}-1}{4}$ and $c,d\in  \Z$ are uniquely determined by 
$p^{2t+1}=c^2+4d^2$, $c\equiv 1 \pmod 4$ and $(c,p)=1$.
 
A similar computation as in the case $m\equiv 0 \pmod 4$ shows that $\G(4,p^{4t+2})$ is Ramanujan (we leave the details to the reader), and the result is thus proved. 
\end{proof}

To conclude, we now summarize the results of this section on the Ramanujanicity of the GP-graphs $\G(k,q)$ with 
$1\le k\le 5$.
\begin{rem}
Consider $\G(k,q)$ where $1\le k\le 5$ and $k\mid q-1$ with $q=p^m$ with $p$ prime. Then we have the following:
\begin{enumerate}[$(a)$]
	\item The graphs $\G(1,q)$ and $\G(2,q)$ are all Ramanujan. \sk 
	
	\item In the semiprimitive case, the graphs $\G(k,q)$ with $k\in \{3,4\}$ are Ramanujan if and only if 
		$\G(k,p^m)$ with $p\equiv -1\pmod k$ and $m\ge 2$, where $m \ge 4$ if $p=k-1$ (see Theorem~\ref{rama car}). \sk 
	
	\item In the non-semiprimitive case, the graphs $\G(3,q)$ with $q$ arbitrary and $\G(4,q)$ with $q$ a square are all Ramanujan (see Theorem \ref{teo G3 Ram}).
	
	\item The graph $\G(5,q)$ in the semiprimitive case is Ramanujan if and only if $m$ is even and either $p \equiv 2,3 \pmod 5$, where $m=4t$ ($t\ge 2$ if $p=2$), or else $p \equiv 4 \pmod 5$, by items $(f)$--$(h)$ in Theorem \ref{rama car}. 
	
	\item In all the previous cases ($a$)--($d$), if $\G(k,q)$ is Ramanujan with $1\le k\le 5$, then $\G^+(k,q)$ is also Ramanujan, since $\lambda(\G^+) =\lambda(\G)$, see \eqref{lambda G}, by Theorem \ref{Spectro Gkq} and hence \eqref{ram cond 2} holds.
\end{enumerate}
\end{rem}

Relative to the GP-graphs with $k=5$, the Ramanujanicity of the non-semiprimitive case is open. On the one hand, the spectrum of $\G(5,q)$ for $p \equiv 2,3 \pmod 5$ is still unknown (when the graph is not semiprimitive). For $p\equiv 1 \pmod 5$ with $q=p^{5t}$, the spectrum $\mathrm{Spec}(\G(5,q))$ is given in Proposition \ref{G5}.
For instance, it is immediate to check that the graph $\G(5,11^5)$ in Example \ref{Ex:G5} is Ramanujan. 
In general, we leave the following question: \textit{which graphs $\G(5,p^{5t})$ with $p\equiv 1 \pmod 5$ are Ramanujan? }


\begin{thebibliography}{XXX}
	
\bibitem{Br}{\sc A.E.\@ Brouwer},
\textrm{Strongly regular graphs' page} \verb|www.win.tue.nl/~aeb/graphs/srg/srgtab.html|.


\bibitem{BrV} {\sc A.E.\@ Brouwer, H.\@ Van Maldeghem}, 
\textrm{Strongly regular graphs}, 
\textit{Cambridge University Press} Vol.\@ 182 2022.


\bibitem{BWX}{\sc A.E.\@ Brouwer, R.M.\@ Wilson, Q.\@ Xiang},
\textrm{Cyclotomy and Strongly Regular Graphs},
\textit{J.\@ Algebr.\@ Comb.\@} \textbf{10} (1999), 25--28.


\bibitem{CvL} \textsc{P.J.\@ Cameron, J.H.\@ van Lint}, 
\textrm{Designs, graphs, codes and their links}, 
\textit{Cambridge University Press}, LMSST 22, 1991.


\bibitem{CDS} \textsc{D.\@ Cvetkovi\v c, M.\@ Doob, H.\@ Sachs},
\textrm{Spectra of Graphs -- Theory and Application}, 
\textit{Academic Press, New York}, 1980.


\bibitem{CvP} \textsc{D.\@ Cvetkovi\v c, M.\@ Petri\v c}, 
\textrm{A table of connected graphs of six vertices}, 
\textit{Discrete Math.\@} \textbf{50} (1984), 37--49.


\bibitem{DY}{\sc C.\@ Ding, J.\@ Yang}, 
\textrm{Hamming weights in irreducible cyclic codes},
\textit{Discrete Math.\@}  \textbf{313:4} (2013), 434--446.


\bibitem{Gu1} \textsc{S.\@ Gurak}, 
\textrm{Period polynomials for $\ff_{q^2}$ of fixed small degree}, 
\textit{In Finite Fields and Applications. Proceedings of the fifth international conference on finite fields and applications $F_{q^5}$, University of Augsburg, Germany, August 2--6, 1999. Berlin: Springer}. 196--207 (2001).


\bibitem{Gu2} \textsc{S.\@ Gurak}, 
\textrm{Period polynomials for $\ff_q$ of fixed small degree}, 
\textit{CRM Proc.\@ Lect.\@ Notes} \textbf{36} (2004) 127--145.


\bibitem{HS}\textsc{F.\@ Harary, A.J.\@ Schwenk}, 
\textrm{Which graphs have integral spectra?}, 
\textit{Graphs and Combin.\@, Proc.\@ Capital Conf.\@, Washington D.C.\@ 1973, Lect.\@ Notes Math.\@} \textbf{406} (1974) 45--51.


\bibitem{HLW}\textsc{S.\@ Hoory, N.\@ Linial, A.\@ Wigderson},
\textrm{Expander graphs and their applications},
\textit{Bull.\@ Amer.\@ Math.\@ Soc.\@} \textbf{43:4} (2006), 439--561.


\bibitem{Ho}\textsc{A.\@ Hoshi},
\textrm{Explicit lifts of quintic Jacobi sums and period polynomials for $\ff_{q}$},
\textit{ Proc. Japan Acad.\@} \textbf{82:7} Ser. A (2006) 87--92.


\bibitem{L}\textsc{K.\@ Feng, W.W.\@ Li}, 
\textrm{Character sums and abelian Ramanujan graphs},
\textit{J.\@ Number Theory} \textbf{41:2}, (1992) 199--217.


\bibitem{LP}{\sc T.K.\@ Lim, C.\@ Praeger}, 
\textrm{On Generalised Paley Graphs and their automorphism groups},
\textit{Michigan Math.\@ J.\@} \textbf{58} (2009), 294--308.


\bibitem{Lub}{\sc A.\@ Lubotzky}, 
\textrm{Expander graphs in pure and applied mathematics},
\textit{Bull.\@ Amer.\@ Math.\@ Soc.\@} \textbf{49} (2012), 113--162.


\bibitem{LuPa}{\sc A.\@ Lubotzky, O.\@ Parzanchevski}, 
\textit{Ramanujan graphs to Ramanujan complexes},
\textit{Phil.\@ Trans.\@ R.\@ Soc.\@ A.
Math. Phys. Eng. Sci.} \textbf{378:2163}, Article ID 20180445, 9 p. 


\bibitem{Mur}\textsc{M.\@ Ram Murty},
\textrm{Ramanujan Graphs},
\textit{J.\@ Ramanujan Math.\@ Soc.\@} \textbf{18:1} (2003), 1--20.


\bibitem{My}\textsc{G.\@ Myerson},
\textrm{Period polynomials and Gauss sums},
\textit{Acta Arith.\@} \textbf{39:3} (1981) 251--264.


\bibitem{Pa} \textsc{O.\@ Parzanchevski},
\textrm{Ramanujan graphs and digraphs},
\textit{Lond.\@ Math.\@ Soc.\@ Lect.\@ Note Ser.} \textbf{461}, 344--367.


\bibitem{PV1} \textsc{R.A.\@ Podest\'a, D.E.\@ Videla},
\textrm{Spectral properties of generalized Paley graphs of $(q^\ell+1)$-th powers and applications},
\textit{Discrete Math.\@ Algorithms Appl.\@} (2024) Online first:
\url{https://doi.org/10.1142/S1793830924500563}


\bibitem{PV2} \textsc{R.A.\@ Podest\'a, D.E.\@ Videla},
\textrm{Weight distribution of cyclic codes defined by quadratic forms and related curves},
\textit{ Rev.\@ Unión Mat.\@ Argent.\@} \textbf{62:1} (2021) 219--242.


\bibitem{PV6} \textsc{R.A.\@ Podest\'a, D.E.\@ Videla},
\textrm{The Waring's problem over finite fields through generalized Paley graphs},
\textit{Discrete Math.\@} \textbf{344}, (2021) 112324.


\bibitem{PV5} \textsc{R.A.\@ Podest\'a, D.E.\@ Videla},
\textrm{Integral equienergetic non-isospectral unitary Cayley graphs},
\textit{Linear Algebra Appl.\@} \textbf{612}, (2021) 42--74.


\bibitem{PV3} \textsc{R.A.\@ Podest\'a, D.E.\@ Videla},
\textrm{Generalized Paley graphs equienergetic with their complements},
\textit{Linear Multilinear Algebra}, \textbf{72:3} (2024) 488--515. 


\bibitem{PV8} \textsc{R.A.\@ Podest\'a, D.E.\@ Videla}.
\textit{On regular graphs equienergetic with their complements}. 
Linear Multilinear Algebra \textbf{71:3}, (2023) 422--456.


\bibitem{PV7} \textsc{R.A.\@ Podest\'a, D.E.\@ Videla},
\textrm{Spectral properties of generalized Paley graphs and their associated irreducible cyclic codes},
In progress.


\bibitem{SW}
\textsc{B.\@ Schmidt, C.\@ White},
\textrm{All two weight irreducible cyclic codes?},
\textit{Finite Fields Appl.\@} \textbf{8} (2002), 1--17.
\end{thebibliography}
\end{document}